\newtheorem{Theorem}{Theorem}[section]
\newtheorem{Lemma}[Theorem]{Lemma}
\newtheorem{Proposition}[Theorem]{Proposition}
\newtheorem{Corollary}[Theorem]{Corollary}
\newtheorem{Definition}[Theorem]{Definition}
\renewcommand{\phi}{\varphi}
\newcommand{\C}{\operatorname{C}}
\newcommand{\N}{\operatorname{N}}
\newcommand{\Z}{\operatorname{Z}}
\newcommand{\K}{\operatorname{K}}
\newcommand{\Aut}{\operatorname{Aut}}
\newcommand{\Out}{\operatorname{Out}}
\newcommand{\Inn}{\operatorname{Inn}}
\newcommand{\pcore}{\operatorname{O}}
\newcommand{\GL}{\operatorname{GL}}
\newcommand{\SL}{\operatorname{SL}}
\newcommand{\PSL}{\operatorname{PSL}}
\newcommand{\PGL}{\operatorname{PGL}}
\newcommand{\Ker}{\operatorname{Ker}}
\newcommand{\Hom}{\operatorname{Hom}}
\newcommand{\PSU}{\operatorname{PSU}}
\newcommand{\Syl}{\operatorname{Syl}}
\newcommand{\nunlhd}{\ntrianglelefteq}
\mathchardef\ordinarycolon\mathcode`\:  %defines a nice ":=" 
\title{Fusion systems on bicyclic $2$-groups}
\author{Benjamin Sambale}
\date{\today}
\begin{document}
\frenchspacing
\maketitle
\begin{abstract}\noindent
We classify all (saturated) fusion systems on bicyclic $2$-groups. Here, a bicyclic group is a product of two cyclic subgroups. This extends previous work on fusion systems on metacyclic $2$-groups (see [Craven-Glesser, 2012] and [Sambale, 2012]). 
%Along the way we develop some useful results about fusion systems on arbitrary $2$-groups. 
As an application we prove Olsson's Conjecture for all blocks with bicyclic defect groups.
\end{abstract}
\textbf{Keywords:} Fusion systems, bicyclic $2$-groups, Olsson's Conjecture\\
\textbf{AMS classification:} 20D15 %, 20C15, 20C20

\section{Introduction}

Fusion systems occur in many areas of mathematics: group theory, representation theory, topology. This makes it interesting to classify fusion systems on a given family of finite $p$-groups. In particular it is of general interest to find so-called \emph{exotic} fusion systems, i.\,e. fusion systems which do not occur among finite groups (see \cite{OliverVentura} for example). On the other hand, it is often useful to know which $p$-groups admit only nilpotent (sometimes called trivial) fusion systems, i.\,e. fusion systems coming from $p$-groups. 
One family of $p$-groups which comes quickly to mind is the class of metacyclic $p$-groups. Here for odd primes $p$ it is known by work of Stancu \cite{Stancu} that every fusion system is controlled. This means one can classify these fusion systems by looking at $p'$-subgroups of the outer automorphism group and their action. In particular only nonexotic fusion systems occur. The fusion systems on metacyclic $2$-groups were determined in \cite{Sambale}. In this case the $2$-groups of maximal class play an important role. 

In order to generalize these results we consider $p$-groups $P$ which can be written in the form $P=\langle x\rangle\langle y\rangle$ for some $x,y\in P$. We call these groups \emph{bicyclic}. For odd primes $p$, Huppert showed in \cite{Huppertbi} that the class of bicyclic groups coincides with the class of metacyclic groups (see also Satz III.11.5 \cite{Huppert}). 
%This means we can always assume that $\langle x\rangle$ or $\langle y\rangle$ is normal in $P$. 
He also pointed out that this is not true for $p=2$. A prominent counterexample is the wreath product $C_4\wr C_2$. So the aim of the present paper is to classify fusion systems on the wider class of bicyclic $2$-groups. 

Apart from Huppert's work, there are many other contributions to the theory of bicyclic $2$-groups. We mention some of them: \cite{Blackburnbi,Itobi,ohara1,ohara2}. One of these early results is the following: Let $P$ be a nonmetacyclic, bicyclic $2$-group. Then the commutator subgroup $P'$ is abelian of rank at most $2$ and $P/P'$ contains a cyclic maximal subgroup. Moreover, if $P/P'$ has exponent at least $8$, then also $P'$ contains a cyclic maximal subgroup.

Recently, Janko \cite{Janko} presented all bicyclic $2$-groups by generators and relations using an equivalent property (see Theorem~\ref{Janko} below). However, the classification of the bicyclic $2$-groups is not complete, since in Janko's presentation it is not clear if some of the parameters give isomorphic groups. Even more recent results which deal with an application to bipartite graphs can be found in \cite{bipartite}.

If not explicitly stated otherwise, all groups in this paper are finite, and all fusion systems are saturated. In the second section we prove some general results about fusion systems on $p$-groups which are more or less consequences of Alperin's Fusion Theorem. 
%Then we restrict us to the case $p=2$. Here a result of Bender \cite{Bender} allows us to give useful information on so-called essential subgroups. 
After that we consider fusion systems on bicyclic $2$-groups.
Here we obtain the unexpected result that every fusion system on a bicyclic $2$-group $P$ is nilpotent unless $P'$ is cyclic. Conversely, every bicyclic, nonmetacyclic $2$-group with cyclic commutator subgroup provides a nonnilpotent fusion system. All these groups are cyclic extensions of (possibly abelian) dihedral or quaternion groups and their number grows with the square of the logarithm of their order.
Moreover, it turns out that no exotic fusion system shows up here. In fact we construct these fusion systems as fusion systems of cyclic extension of finite groups of Lie type. 
The complete classification is given in Theorem~\ref{main}. As a byproduct, we also investigate the isomorphism problem of some of the groups in Janko's paper \cite{Janko}. 
At the end we prove as an application that Olsson's Conjecture of block theory holds for all blocks with bicyclic defect groups. Other conjectures for blocks with bicyclic defect groups have been investigated in a separate paper \cite{Sambalefurther}.

Most of our notation is standard. A finite $p$-group $P$ has \emph{rank} $r$ if $|P:\Phi(P)|=p^r$, i.\,e. $P$ is generated by $r$ elements, but not by fewer. Similarly the \emph{$p$-rank} of $P$ is the maximal rank of an abelian subgroup of $P$. We denote the members of the lower central series of a $p$-group $P$ by $\K_i(P)$; in particular $\K_2(P)=P'$. Moreover, $\Omega_i(P)=\langle x\in P: x^{p^i}=1\rangle$ and $\mho_i(P):=\langle x^{p^i}:x\in P\rangle$ for $i\ge 1$. For convenience we write $\Omega(P):=\Omega_1(P)$ and $\mho(P):=\mho_1(P)$.
A cyclic group of order $n\in\mathbb{N}$ is denoted by $C_n$. Moreover, we set $C_n^k:=C_n\times\ldots\times C_n$ ($k$ factors). In particular groups of the form $C_n^2$ are called \emph{homocyclic}. A dihedral (resp. semidihedral, quaternion) group of order $2^n$ is denoted by $D_{2^n}$ (resp. $SD_{2^n}$, $Q_{2^n}$). A group $G$ is \emph{minimal nonabelian} if $G$ is nonabelian, but all proper subgroups of $G$ are abelian.
We say that a $p$-group $P$ is minimal nonabelian of \emph{type} $(r,s)$ if
\begin{equation}\label{mnatype}
P\cong\langle x,y\mid x^{p^r}=y^{p^s}=[x,y]^p=[x,x,y]=[y,x,y]=1\rangle
\end{equation}
where $[x,y]:=xyx^{-1}y^{-1}$ and $[x,y,z]:=[x,[y,z]]$ (see \cite{Redei}). Moreover, we set $^xy:=xyx^{-1}$ for elements $x$ and $y$ of a group. A group extension with normal subgroup $N$ is denoted by $N.H$. If the extension splits, we write $N\rtimes H$ for the semidirect product. A central product is denoted by $N\mathop{\ast}H$ where it will be always clear which subgroup of $\Z(N)$ is merged with a subgroup of $\Z(H)$. 
For the language of fusion systems we refer to \cite{Linckelmann}. 

\section{General results}
We begin with two elementary lemmas about minimal nonabelian groups.

\begin{Lemma}\label{charmna}
A finite $p$-group $P$ is minimal nonabelian if and only if $P$ has rank $2$ and $|P'|=p$. 
\end{Lemma}
\begin{proof}
Assume first that $P$ is minimal nonabelian. Choose two noncommuting elements $x,y\in P$. Then $\langle x,y\rangle$ is nonabelian and $P=\langle x,y\rangle$ has rank $2$. Every Element $x\in P$ lies in a maximal subgroup $M\le P$. Since $M$ is abelian, $M\subseteq\C_P(x)$. In particular, all conjugacy classes of $P$ have length at most $p$. By a result of Knoche (see Aufgabe III.24b) in \cite{Huppert}) we obtain $|P'|=p$. 

Next, suppose that $P$ has rank $2$ and $|P'|=p$. Then $P'\le\Z(P)$. For $x,y\in P$ we have $[x^p,y]=[x,y]^p=1$. Hence, $\Phi(P)=P'\langle x^p:x\in P\rangle\le\Z(P)$. For any maximal subgroup $M\le P$ it follows that $|M:\Z(P)|\le|M:\Phi(P)|=p$. Therefore, $M$ is abelian and $P$ is minimal nonabelian.
\end{proof}

\begin{Lemma}
Let $P$ be a minimal nonabelian group of type $(r,s)$. Then the following holds:
\begin{enumerate}[(i)]
\item $|P|=p^{r+s+1}$.
\item $\Phi(P)=\Z(P)=\langle x^2,y^2,[x,y]\rangle\cong C_{p^{r-1}}\times C_{p^{s-1}}\times C_p$.
\item $P'=\langle [x,y]\rangle\cong C_p$.
\end{enumerate}
\end{Lemma}
\begin{proof}
straightforward.
\end{proof}

By Alperin's Fusion Theorem, the morphisms of a fusion system $\mathcal{F}$ on a $p$-group $P$ are controlled by the $\mathcal{F}$-essential subgroups of $P$. 

\begin{Definition}
A subgroup $Q\le P$ is called \emph{$\mathcal{F}$-essential} if the following properties hold:
\begin{enumerate}[(i)]
\item $Q$ is \emph{fully $\mathcal{F}$-normalized}, i.\,e. $\lvert\N_P(R)\rvert\le\lvert\N_P(Q)\rvert$ if $R\le P$ and $Q$ are $\mathcal{F}$-isomorphic.
\item $Q$ is \emph{$\mathcal{F}$-centric}, i.\,e. $\C_P(R)=\Z(R)$ if $R\le P$ and $Q$ are $\mathcal{F}$-isomorphic.
\item $\Out_{\mathcal{F}}(Q):=\Aut_{\mathcal{F}}(Q)/\Inn(Q)$ contains a strongly $p$-embedded subgroup $H$, i.\,e. $p\mid|H|<\lvert\Out_{\mathcal{F}}(Q)\rvert$ and $p\nmid|H\cap {^xH}|$ for all $x\in\Out_{\mathcal{F}}(Q)\setminus H$.
\end{enumerate}
\end{Definition}

Notice that in \cite{Linckelmann} the first property is not required. It should be pointed out that there are usually very few $\mathcal{F}$-essential subgroups. In many cases there are none. For convenience of the reader we state a version of Alperin's Fusion Theorem. For this let $\mathcal{E}$ be a set of representatives for the $\Aut_{\mathcal{F}}(P)$-conjugacy classes of $\mathcal{F}$-essential subgroups of $P$. %GorensteinLyons,

\begin{Theorem}[Alperin's Fusion Theorem]
Let $\mathcal{F}$ be a fusion system on a finite $p$-group $P$. Then every isomorphism in $\mathcal{F}$ is a composition of finitely many isomorphisms of the form $\phi:S\to T$ such that $S,T\le Q\in\mathcal{E}\cup\{P\}$ and there exists $\psi\in\Aut_{\mathcal{F}}(Q)$ with $\psi_{|S}=\phi$. Moreover, if $Q\ne P$, we may assume that $\psi$ is a $p$-element.
\end{Theorem}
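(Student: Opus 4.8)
The plan is to prove the statement by induction on the index $\lvert P:S\rvert$, using the saturation axioms as an extension mechanism and letting the $\mathcal{F}$-essential subgroups emerge exactly as the places where this mechanism fails. I first reduce everything to $\mathcal{F}$-automorphisms of fully $\mathcal{F}$-normalized subgroups. Given an $\mathcal{F}$-isomorphism $\phi\colon S\to T$, I choose a fully normalized subgroup $U$ in the common $\mathcal{F}$-conjugacy class together with $\mathcal{F}$-isomorphisms $\alpha\colon S\to U$ and $\beta\colon T\to U$, and write $\phi=\beta^{-1}\circ(\beta\phi\alpha^{-1})\circ\alpha$ with $\beta\phi\alpha^{-1}\in\Aut_{\mathcal{F}}(U)$. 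To factor a connecting isomorphism such as $\alpha$ I exploit that $U$ is fully normalized, hence fully automized and receptive: the $p$-subgroup $\alpha\,\Aut_P(S)\,\alpha^{-1}$ of $\Aut_{\mathcal{F}}(U)$ lies in some Sylow $p$-subgroup, so after composing with a suitable $\chi\in\Aut_{\mathcal{F}}(U)$ I may assume $\chi\alpha\,\Aut_P(S)\,(\chi\alpha)^{-1}\le\Aut_P(U)$; then $\chi\alpha$ extends to $\N_P(S)$ by receptivity, and since $S\ne P$ forces $\N_P(S)>S$ this extension has strictly smaller index and factors by induction. Thus $\alpha$ (and likewise $\beta$) is reduced to an automorphism of $U$, and the whole problem collapses to showing that every automorphism of a fully normalized subgroup factors.

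For the induction itself the base case is $S=P$, where $\phi\in\Aut_{\mathcal{F}}(P)$ is already of the required form. So let $Q\ne P$ be fully normalized and $\rho\in\Aut_{\mathcal{F}}(Q)$, and assume the theorem for all subgroups of strictly smaller index. If $Q$ is not $\mathcal{F}$-centric then $\C_P(Q)\not\le Q$; since every element of $\C_P(Q)$ induces the identity on $Q$, receptivity of $Q$ lets me extend $\rho$ to $\N_\rho\supseteq Q\C_P(Q)>Q$, and the inductive hypothesis finishes this case. Hence I may assume $Q$ is centric, so that $\C_P(Q)=\Z(Q)$ and $\Out_P(Q)=\N_P(Q)/Q$ is a nontrivial Sylow $p$-subgroup of $\Out_{\mathcal{F}}(Q)$.

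The heart of the argument is the centric case. Let $H\le\Out_{\mathcal{F}}(Q)$ be the subgroup generated by the images of all $\rho'\in\Aut_{\mathcal{F}}(Q)$ that extend to an $\mathcal{F}$-automorphism of some subgroup strictly containing $Q$. Each such generator factors by induction, and $\Out_P(Q)\le H$ because every automorphism induced by $\N_P(Q)$ extends to $\N_P(Q)>Q$. If $H=\Out_{\mathcal{F}}(Q)$, then $\rho$ factors and we are done. Otherwise $H<\Out_{\mathcal{F}}(Q)$, and I claim $H$ is strongly $p$-embedded: it contains the Sylow $p$-subgroup $\Out_P(Q)$, so $p\mid\lvert H\rvert$ and $p\nmid\lvert\Out_{\mathcal{F}}(Q):H\rvert>1$, while for $x\notin H$ a common nontrivial $p$-element of $H\cap{}^xH$ would, through the normalizer analysis underlying extendability, already force $x\in H$, so $p\nmid\lvert H\cap{}^xH\rvert$. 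This exhibits $Q$ as $\mathcal{F}$-essential and completes the induction with $\psi=\rho$.

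To obtain the refinement that $\psi$ may be chosen to be a $p$-element when $Q\ne P$, I use that the strongly $p$-embedded subgroup $H$ contains a Sylow $p$-subgroup of $G:=\Out_{\mathcal{F}}(Q)$: a short orbit-counting argument on the coset space $G/H$ (the Sylow $p$-subgroup fixes only the coset $H$) yields $G=\pcore^{p'}(G)\,H$. Since $\pcore^{p'}(G)$ is generated by $p$-elements, I may write the image of $\rho$ as a product of $p$-elements of $G$—these give the essential-subgroup generators, and they lift to $p$-elements of $\Aut_{\mathcal{F}}(Q)$ because $\Inn(Q)$ is a $p$-group—times a factor in $H$, which is disposed of by the extension step. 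I expect the main obstacle to be the verification that the obstructing subgroup $H$ is genuinely strongly $p$-embedded rather than merely proper: controlling the $p$-elements in the intersections $H\cap{}^xH$ is exactly the point where the full strength of the saturation axioms, fully normalized subgroups being both fully automized and receptive, must be brought to bear together with the Sylow position of $\Out_P(Q)$.
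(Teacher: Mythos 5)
Your proposal takes a genuinely different, self-contained route. The paper never reproves the core factorization: it quotes Linckelmann's version of the theorem to reduce to $S=T\in\mathcal{E}$, and its own content consists of the two refinements — the passage to $\Aut_{\mathcal{F}}(P)$-conjugacy representatives, and the $p$-element claim via a Frattini argument (with $K:=\langle f\in\Aut_{\mathcal{F}}(S)\ \text{$p$-element}\rangle\unlhd\Aut_{\mathcal{F}}(S)$ one gets $\Aut_{\mathcal{F}}(S)=K\,\N_{\Aut_{\mathcal{F}}(S)}(\Aut_P(S))$, and the normalizer factor has $\N_\beta=\N_P(S)$, so it extends and is absorbed by induction). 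You instead rebuild the whole induction from the saturation axioms and obtain the refinement from $\Out_{\mathcal{F}}(Q)=\pcore^{p'}(\Out_{\mathcal{F}}(Q))H$ for the strongly $p$-embedded obstruction group $H$; that identity is correct (every $p$-element lies in $\pcore^{p'}(\Out_{\mathcal{F}}(Q))H$, so Sylow's theorem in that subgroup together with $\N_{\Out_{\mathcal{F}}(Q)}(\Out_P(Q))\le H$ forces equality), so your route to the $p$-element statement is a viable alternative to the paper's shorter Frattini argument. Your riskiest step — that $H$ is strongly $p$-embedded when proper — is only sketched, but it is completable with exactly the tools you name, provided $H$ is defined via extensions to $\mathcal{F}$-isomorphisms on strictly larger domains (receptivity produces morphisms into $P$, not automorphisms of larger subgroups): if $\Out_P(Q)\cap{}^{\bar\alpha}\Out_P(Q)\ne1$, then any lift $\alpha$ of $\bar\alpha$ satisfies $\N_\alpha>Q$ because $Q$ is centric, hence $\alpha$ extends and $\bar\alpha\in H$; and a nontrivial $p$-element $u\in H\cap{}^xH$ can be conjugated into $\Out_P(Q)$ by Sylow's theorem inside $H$ and inside ${}^xH$, producing such an element of the form $h''x^{-1}h^{-1}$ with $h,h''\in H$ and forcing $x\in H$.

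There is, however, one genuine gap relative to the statement as formulated here. The theorem fixes an \emph{arbitrary} set $\mathcal{E}$ of representatives of the $\Aut_{\mathcal{F}}(P)$-conjugacy classes of essential subgroups and demands $Q\in\mathcal{E}\cup\{P\}$, whereas your induction only delivers factorizations through whichever fully normalized essential subgroups arise in it; at best you prove the theorem for \emph{some} transversal of the essential subgroups, not for the prescribed $\mathcal{E}$. This reduction is precisely the first paragraph of the paper's proof and is absent from yours. It is easy to add: for $Q$ essential, $\gamma\in\Aut_{\mathcal{F}}(P)$ with $\gamma(Q)\in\mathcal{E}$, and $\psi\in\Aut_{\mathcal{F}}(Q)$, write $\psi_{|S}=\gamma^{-1}\circ(\gamma\psi\gamma^{-1})_{|\gamma(S)}\circ\gamma_{|S}$; the outer factors are restrictions of elements of $\Aut_{\mathcal{F}}(P)$, and $\gamma\psi\gamma^{-1}\in\Aut_{\mathcal{F}}(\gamma(Q))$ is again a $p$-element whenever $\psi$ is. Without this conversion the statement you prove is strictly weaker than the one the paper states and later relies on, since in the applications (e.g. Theorem~\ref{main}) the essential subgroups are only pinned down up to conjugation.
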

\begin{proof}
Apart from the last sentence, this is Theorem~5.2 in \cite{Linckelmann}. Thus for $S\in\mathcal{E}$ and $\phi\in\Aut_{\mathcal{F}}(S)$ we need to show that $\phi$ can be written as a composition of isomorphisms in the stated form. As $S<P$, also $S<\N_P(S)$, so by induction on $|P:S|$ we can assume that the claim is true for any $\mathcal{F}$-automorphism of $\N_P(S)$.
%
%This is a slightly stronger version as in \cite{Linckelmann}. First we show that we can replace an $\mathcal{F}$-essential subgroup $Q$ by $\alpha(Q)$ for $\alpha\in\Aut_{\mathcal{F}}(P)$. For this let $S,T\le Q$, $\psi\in\Aut_{\mathcal{F}}(Q)$ and $\psi_{|S}=\phi:S\to T$. Then $\alpha\psi\alpha^{-1}\in\Aut_{\mathcal{F}}(\alpha(Q))$. Hence $\phi=\alpha^{-1}\circ(\alpha\psi\alpha^{-1})_{|\alpha(S)}\circ\alpha_{|S}$ is a composition of isomorphisms which have the desired form.
%
%In order the prove the last claim, we adapt the proof in \cite{Linckelmann}. For this let $\phi:S\to T$ a isomorphism in $\mathcal{F}$. We show by induction on $|P:S|$ that $\phi$ can be written as a composition of isomorphisms of the stated form. In the case $S=P=T$ there is nothing to do. Thus, let $S<P$. As in \cite{Linckelmann} we may assume that $S=T\in\mathcal{E}$. 
Let $K:=\langle f\in\Aut_{\mathcal{F}}(S)\ \text{$p$-element}\rangle\unlhd\Aut_{\mathcal{F}}(S)$. Since $\Aut_P(S)$ is a Sylow $p$-subgroup of $\Aut_{\mathcal{F}}(S)$, the Frattini argument implies $\Aut_{\mathcal{F}}(S)=K\N_{\Aut_{\mathcal{F}}(S)}(\Aut_P(S))$. Hence, we can write $\phi=\alpha\beta$ such that $\alpha\in K$ and $\beta\in\N_{\Aut_{\mathcal{F}}(S)}(\Aut_P(S))$. With the notation of \cite{Linckelmann} we have $\N_{\beta}=\N_P(S)$. Then $\beta$ can be extended to a morphism $\beta'$ on $\N_P(S)$. Since $S<\N_P(S)$, induction shows that $\beta'$ is a composition of isomorphisms of the stated form and so is $\beta=\beta'_{|S}$ and $\beta^{-1}$. Thus after replacing $\phi$ by $\phi\circ\beta^{-1}$, we may assume $\phi\in K$. Then it is obvious that $\phi$ is a composition of isomorphisms as desired. 
\end{proof}

We deduce some necessary conditions for a subgroup $Q\le P$ in order to be $\mathcal{F}$-essential. Since $Q$ is $\mathcal{F}$-centric, we have $\C_P(Q)\subseteq Q$. 
Since $\Out_{\mathcal{F}}(Q)$ contains a strongly $p$-embedded subgroup, $\Out_{\mathcal{F}}(Q)$ is not a $p$-group and not a $p'$-group. Moreover, $\N_P(Q)/Q$ is isomorphic to a Sylow $p$-subgroup of $\Out_{\mathcal{F}}(Q)$. This shows $Q<P$. We also have $\pcore_p(\Aut_{\mathcal{F}}(Q))=\Inn(Q)$. Consider the canonical homomorphism
\[F:\Aut_{\mathcal{F}}(Q)\to\Aut_{\mathcal{F}}(Q/\Phi(Q)).\]
It is well known that $\Ker F$ is a $p$-group. On the other hand $\Inn(Q)$ acts trivially on the abelian group $Q/\Phi(Q)$. This gives $\Ker F=\Inn(Q)$ and $\Out_{\mathcal{F}}(Q)\cong\Aut_{\mathcal{F}}(Q/\Phi(Q))$. 
%In particular $\N_P(Q)/Q$ acts faithfully on $Q/\Phi(Q)$. Hence, $[\langle x\rangle,Q]\nsubseteq\Phi(Q)$ for all $x\in\N_P(Q)\setminus Q$.
%
%\begin{Proposition}\label{uppertriangular}
%Let $\mathcal{F}$ be a fusion system on a finite $p$-group $P$. 
%If $Q\le P$ is $\mathcal{F}$-essential of rank $r$, then $\Out_{\mathcal{F}}(Q)\cong\Aut_{\mathcal{F}}(Q/\Phi(Q))\le\GL(r,p)$ and $\lvert\N_P(Q)/Q\rvert\le p^{r(r-1)/2}$. Moreover, $\N_P(Q)/Q$ has nilpotency class at most $r-1$ and exponent at most $p^{\lceil\log_p(r)\rceil}$. In particular $\lvert\N_P(Q)/Q\rvert=p$ if $r=2$.
%\end{Proposition}
%\begin{proof}
%A Sylow $p$-subgroup of $\GL(r,p)$ is given by the group $U$ of upper triangular matrices with ones on the main diagonal. We may assume $\N_P(Q)/Q\le U$. Then $U$ has order $p^{r(r-1)/2}$ and nilpotency class $r-1$ (see §III.16 in \cite{Huppert}). Let $x\in U$ and $m:=\lceil\log_p(r)\rceil$. Then in the ring of $r\times r$ matrices over $\mathbb{F}_p$ we have
%\[x^{p^m}-1=(x-1)^{p^m}=0\]
%(using the Frobenius automorphism of $\mathbb{F}_p$).
%This shows that $U$ has exponent at most $p^m$.
%\end{proof}
%
%If $p$ is odd or $Q$ is abelian, a similar argument shows that $\Out_{\mathcal{F}}(Q)$ is isomorphic to a quotient of $\Aut(\Omega(Q))$ (see Theorem~5.2.4 and 5.3.10 in \cite{Gorenstein}). In this case we have $\Omega(Q)\nsubseteq\Z(\N_P(Q))$. In the general case one can replace $\Omega(Q)$ by a so-called “critical” subgroup (see Theorem~5.3.11 in \cite{Gorenstein}).
%
\begin{Lemma}\label{maxclass}
Let $\mathcal{F}$ be a fusion system on a finite $p$-group $P$, and let $Q\le P$ be $\mathcal{F}$-essential.
If $|Q|\le p^2$ or if $Q$ is nonabelian of order $p^3$, then $P$ has maximal class.
\end{Lemma}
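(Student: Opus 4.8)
The plan is to first determine which subgroups $Q$ can occur and then to feed the resulting local information into the classical characterisation of maximal class. Since $\Out_{\mathcal F}(Q)$ contains a strongly $p$-embedded subgroup, it is neither a $p$-group nor a $p'$-group; in particular $Q\neq 1$. If $Q$ were cyclic, then $Q/\Phi(Q)\cong C_p$, and the isomorphism $\Out_{\mathcal F}(Q)\cong\Aut_{\mathcal F}(Q/\Phi(Q))$ established before the proposition would force $\Out_{\mathcal F}(Q)\le\Aut(C_p)\cong C_{p-1}$ to be a $p'$-group, which is absurd. This excludes $|Q|=p$ and cyclic $Q$ of order $p^2$, so for $|Q|\le p^2$ only $Q\cong C_p^2$ remains. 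Both remaining types ($C_p^2$ and nonabelian of order $p^3$) have rank $2$, whence Proposition~\ref{uppertriangular} gives $|\N_P(Q)/Q|=p$, while centricity gives $\C_P(Q)=\Z(Q)\le Q$. As $\C_P(Q)\le Q<P$, the group $P$ is nonabelian; thus if $|P|\le p^3$ then $|P|=p^3$ and $P$ already has maximal class, and I may assume $|P|\ge p^4$.

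The engine is the classical criterion of Blackburn (see \S III.14 in \cite{Huppert}) that a $p$-group of order at least $p^4$ has maximal class if and only if it contains a self-centralizing subgroup of order $p^2$, equivalently an element whose centralizer has order $p^2$. When $Q\cong C_p^2$ this finishes the proof at once: centricity gives $\C_P(Q)=Q$, so $Q$ itself is a self-centralizing subgroup of order $p^2$.

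There remains the case that $Q$ is nonabelian of order $p^3$, where $\C_P(Q)=\Z(Q)$ has order $p$, so $Q$ is not itself of the required shape and the argument must go one level deeper. Choosing any $x\in Q\setminus\Z(Q)$, its centralizer $\C_Q(x)$ is the unique maximal abelian subgroup of $Q$ through $x$ and has order $p^2$, so $\C_P(x)\cap Q=\C_Q(x)$ has order $p^2$; hence it suffices to prove $\C_P(x)\le Q$ and then apply the criterion. To organise the ambient structure I would use the normalizer chain $Q=N_0\lhd N_1\lhd\cdots$ with $N_{i+1}=\N_P(N_i)$, along which $\C_P(N_i)\le\C_P(Q)=\Z(Q)\le N_i$ forces every $N_i$ to be self-centralizing with $\Z(N_i)=\Z(Q)$ of order $p$; in particular $N_1=\N_P(Q)$ has order $p^4$ and small centre (for $p=2$ one checks $N_1\in\{SD_{16},Q_{16}\}$, and the chosen $x$ already satisfies $|\C_{N_1}(x)|=p^2$). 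The main obstacle is precisely the passage from these normalizers to $P$, i.e. excluding elements outside $Q$ that centralize $x$; via $N_{i+1}/\Z(N_i)\hookrightarrow\Aut(N_i)$ this amounts to showing that each step of the chain has index $p$. Because $\Out$ of a maximal-class group may have $p$-part exceeding $p$, this last point does not follow formally and is exactly where the finer structure theory of maximal-class $p$-groups is needed; for $p=2$ it can be circumvented by instead verifying $|P:P'|=4$ and invoking Taussky's theorem to recognise $P$ as dihedral, semidihedral, or generalized quaternion.
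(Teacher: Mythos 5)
Your preliminary reductions and the first half of the lemma are correct. Ruling out cyclic $Q$ via $\Out_{\mathcal{F}}(Q)\cong\Aut_{\mathcal{F}}(Q/\Phi(Q))$ is exactly right, and for $Q\cong C_p^2$ the argument is complete: $\mathcal{F}$-centricity makes $Q$ a self-centralizing subgroup of order $p^2$, and the classical criterion (this is Proposition~1.8 in \cite{Berkovich1}, which is precisely what the paper cites; the general-$p$ statement is not really in §III.14 of \cite{Huppert} -- Satz~III.14.23 there is the $p=2$ version) yields maximal class. Note that the paper's own proof is nothing more than a citation of Propositions~1.8 and 10.17 in \cite{Berkovich1}, so up to this point you have reproved the content of the first citation.

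The case where $Q$ is nonabelian of order $p^3$ contains a genuine gap, which you partially acknowledge (``does not follow formally''), but the problem is worse than a missing step: the reduction you set up is false as stated. You choose \emph{any} $x\in Q\setminus\Z(Q)$ and aim to prove $\C_P(x)\le Q$, but this containment can fail for some choices of $x$ even when the conclusion of the lemma holds. Take $P=D_{16}=\langle r,s\mid r^8=s^2=1,\ {}^srs^{-1}=r^{-1}\rangle$ and $Q=\langle r^2,s\rangle\cong D_8$: then $\C_P(Q)=\langle r^4\rangle=\Z(Q)$, so $Q$ satisfies all the hypotheses in force at that point of your argument, yet $x=r^2\in Q\setminus\Z(Q)$ has $\C_P(x)=\langle r\rangle\nleq Q$ (this example also shows that $D_{16}$ is missing from your list $\{SD_{16},Q_{16}\}$ of possibilities for $\N_P(Q)$). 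What is actually needed is the \emph{existence} of some $x\in Q\setminus\Z(Q)$ with $\lvert\C_P(x)\rvert=p^2$, derived from the single hypothesis $\C_P(Q)=\Z(Q)$; that existence statement is precisely Proposition~10.17 in \cite{Berkovich1}, i.e.\ the second citation in the paper's proof, and it is a real theorem, not a formality. Your normalizer chain only controls centralizers inside $\N_P(Q)$ and cannot reach $P$, and the proposed $p=2$ escape route (verify $\lvert P:P'\rvert=4$, then apply Taussky) merely restates the difficulty, since no argument for $\lvert P:P'\rvert=4$ is given. As it stands, the proposal proves the lemma only for $\lvert Q\rvert\le p^2$.
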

\begin{proof}
This follows from Proposition~1.8 and Proposition~10.17 in \cite{Berkovich1}.
\end{proof}

Now we turn to $2$-groups.

\begin{Lemma}\label{autqfess}
Let $\mathcal{F}$ be a fusion system on a finite $2$-group $P$. If $Q\le P$ is an $\mathcal{F}$-essential subgroup of rank at most $3$, then $\Out_{\mathcal{F}}(Q)\cong S_3$ and $\lvert\N_P(Q):Q\rvert=2$.
\end{Lemma}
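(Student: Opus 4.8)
The plan is to exploit the general facts assembled just before Proposition~\ref{uppertriangular}: an $\mathcal{F}$-essential subgroup $Q$ of rank $r$ satisfies $\Out_{\mathcal{F}}(Q)\cong\Aut_{\mathcal{F}}(Q/\Phi(Q))\le\GL(r,2)$, the group $\Out_{\mathcal{F}}(Q)$ contains a strongly $2$-embedded subgroup, and $N:=\N_P(Q)/Q$ is isomorphic to a Sylow $2$-subgroup of $\Out_{\mathcal{F}}(Q)$. Since $\GL(1,2)$ is trivial and so carries no strongly $2$-embedded subgroup, the rank $r=1$ cannot occur, hence $r\in\{2,3\}$. For $r=2$ I would finish immediately: here $\Out_{\mathcal{F}}(Q)\le\GL(2,2)\cong S_3$, and the only subgroup of $S_3$ that is neither a $2$-group nor a $2'$-group --- and thus the only candidate carrying a strongly $2$-embedded subgroup --- is $S_3$ itself, giving $\Out_{\mathcal{F}}(Q)\cong S_3$ and $|N|=2$.

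So the problem reduces to $r=3$, where I would work inside $\GL(3,2)\cong\PSL(2,7)$; via the block embedding $\GL(2,2)\le\GL(3,2)$ this also subsumes the previous case. First I would pin down $N$ with Theorem~\ref{sylow}: for $r=3$ the stated bounds force $N$ to be cyclic of order at most $4$, quaternion of order at most $8$, or elementary abelian of order at most $2^{\lfloor 3/2\rfloor}=2$, while the two remaining cases of that theorem are impossible since they require the nonabelian $N$ to satisfy $|N|=|\Omega(N)|^{2}\le 2$ or $|N|=|\Omega(N)|^{3}\le 2$. Thus $N\in\{C_2,C_4,Q_8\}$. The case $N\cong Q_8$ is excluded at once: a Sylow $2$-subgroup of $\GL(3,2)$ is the unitriangular group $D_8$, so $|Q_8|=8=|D_8|$ would force $N$ to be the whole (dihedral) Sylow subgroup, a contradiction.

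The remaining task --- excluding $N\cong C_4$ and identifying $\Out_{\mathcal{F}}(Q)$ when $N\cong C_2$ --- is the main obstacle, and I would streamline the case analysis inside $\PSL(2,7)$ using two structural facts: its element orders are $1,2,3,4,7$, and the normalizer of a Sylow $7$-subgroup in $\PSL(2,7)$ is the Frobenius group of order $21$. A subgroup with cyclic Sylow $2$-subgroup $C_4$ and a strongly $2$-embedded subgroup would have order $12$, $28$ or $84$; the orders $28$ and $84$ are impossible because such a subgroup contains a normal Sylow $7$-subgroup and hence embeds into $\N(C_7)$ of order $21$, whereas a group of order $12$ with cyclic Sylow $C_4$ (namely $C_{12}$ or the dicyclic group $\mathrm{Dic}_3$) contains an element of order $12$ or $6$, neither of which exists in $\PSL(2,7)$. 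Hence $N\cong C_2$. Finally, among the subgroups of $\PSL(2,7)$ with Sylow $2$-subgroup $C_2$ (possible orders $2,6,14,42$) the orders $14$ and $42$ are killed by the same $\N(C_7)$ argument, order $2$ is a $2$-group, and order $6$ leaves only $S_3$, since $C_6$ is nilpotent and thus violates $\pcore_2=1$ in Bender's case~\eqref{bender1}. Bender's case~\eqref{bender2} is impossible throughout, as its smallest simple section is $\SL(2,4)\cong A_5$ of order $60\nmid 168$ (alternatively one could discard case~\eqref{bender2} directly via Lemma~\ref{emb} and its companions). This yields $\Out_{\mathcal{F}}(Q)\cong S_3$ and $|\N_P(Q):Q|=|N|=2$, as claimed.
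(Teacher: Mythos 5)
Your proof is correct, and invoking Theorem~\ref{sylow} is legitimate: it precedes this lemma in the paper and its proof is independent of it, so there is no circularity. Your route is, however, organized differently from the paper's. You pivot on the possible isomorphism types of $N=\N_P(Q)/Q$ (via Theorem~\ref{sylow}), and then eliminate the conceivable orders $12$, $28$, $84$, respectively $2$, $6$, $14$, $42$, of $\Out_{\mathcal{F}}(Q)$ inside $\GL(3,2)\cong\PSL(2,7)$ one at a time, using Sylow counting, the order-$21$ normalizer of a Sylow $7$-subgroup, and the absence of elements of order $6$ or $12$ in $\PSL(2,7)$. The paper never uses Theorem~\ref{sylow} here: it observes instead that $\GL(3,2)$ itself has no strongly $2$-embedded subgroup by Theorem~\ref{bender}, so $\Out_{\mathcal{F}}(Q)$ is a \emph{proper} subgroup of $\PSL(2,7)$ and hence solvable; this rules out case~\eqref{bender2}, and case~\eqref{bender1} gives cyclic Sylow $2$-subgroups (quaternion cannot embed in $D_8$). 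The case analysis is then run not on $\lvert\Out_{\mathcal{F}}(Q)\rvert$ but on $\pcore_{2'}(\Out_{\mathcal{F}}(Q))$, which is the Fitting subgroup (as $\pcore_2(\Out_{\mathcal{F}}(Q))=1$) and therefore nontrivial of order $3$, $7$ or $21$. The values $7$ and $21$ die by the same Sylow-$7$ normalizer you use (plus its self-normalizing property for the value $21$), and the value $3$ finishes in one stroke: the Fitting subgroup of a solvable group contains its own centralizer, so $\Out_{\mathcal{F}}(Q)/C_3$ embeds in $\Aut(C_3)\cong C_2$, forcing $\lvert\Out_{\mathcal{F}}(Q)\rvert\le 6$ and $\Out_{\mathcal{F}}(Q)\cong S_3$. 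That Fitting argument absorbs, simultaneously, your separate eliminations of $N\cong C_4$ and of the order-$6$ and order-$12$ candidates, with no need for element orders; your version, in exchange, is more explicit and self-contained, needing only Lagrange, Sylow counting and small-group facts about $\PSL(2,7)$ once Theorem~\ref{sylow} is granted.
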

\begin{proof}
By the remark above we have $\Out_{\mathcal{F}}(Q)\le\GL(r,2)$ where $r$ is the rank of $Q$.
Hence, we may assume that $Q$ has rank $3$. Then $\Out_{\mathcal{F}}(Q)\le\GL(3,2)$. A computer calculation (which of course, can be carried out by hand as well) shows that $S_3$ is the only subgroup of $\GL(3,2)$ (up to isomorphism) with a strongly $2$-embedded subgroup.
%By Theorem~\ref{sylow}, $\Out_{\mathcal{F}}(Q)$ has cyclic Sylow $2$-subgroups. 
%By Theorem~\ref{bender}, $\GL(3,2)$ does not contain a strongly $2$-embedded subgroup. Hence $\Out_{\mathcal{F}}(Q)$ is solvable and has cyclic Sylow $2$-subgroups. 
%In particular $\pcore_{2'}(\Out_{\mathcal{F}}(Q))$ has order $3$, $7$ or $21$. Since the normalizer of a Sylow $7$-subgroup of $\GL(3,2)$ has order $21$, it follows that $\lvert\pcore_{2'}(\Out_{\mathcal{F}}(Q))\rvert\ne 7$. Since this normalizer is selfnormalizing in $\GL(3,2)$, we also have $\lvert\pcore_{2'}(\Out_{\mathcal{F}}(Q))\rvert\ne 21$. This shows $\lvert\pcore_{2'}(\Out_{\mathcal{F}}(Q))\rvert=3$. Now the claim follows easily from $\pcore_2(\Out_{\mathcal{F}}(Q))=1$.
\end{proof}

\begin{Proposition}\label{rank2}
Let $\mathcal{F}$ be a fusion system on a finite $2$-group $P$. If $Q\le P$ is an $\mathcal{F}$-essential subgroup of rank $2$, then one of the following holds: %$P$ has rank $2$ and 
\begin{enumerate}[(i)]
\item $Q\cong C_2^2$ and $P\in\{D_{2^n},SD_{2^n}\}$ for some $n\ge 3$.
\item $Q\cong Q_8$ and $P\in\{Q_{2^n},SD_{2^n}\}$ for some $n\ge 3$.
\item\label{wreath} $Q\cong C_{2^r}^2$ and $P\cong C_{2^r}\wr C_2$ for some $r\ge 2$.
\item\label{general} %$Q\unlhd P$ and
$Q/\Phi(Q')\K_3(Q)$ is minimal nonabelian of type $(r,r)$ for some $r\ge 2$.
%$\N_P(Q)/\Phi(Q')\K_3(Q)\cong\langle x,y,a\mid x^{2^r}=[x,y]^2=[x,x,y]=[y,x,y]=1,\ a^2\in\langle[x,y]\rangle,\ ^ax=y\rangle$
%for some $r\ge 2$.
%In particular for a fixed $r$ there are two possible isomorphism classes for $\N_P(Q)/\Phi(Q')\K_3(Q)$, both of order $2^{2(r+1)}\ge 2^6$. Moreover, $\N_P(Q)/Q'\cong C_{2^r}\wr C_2$ and $Q/\Phi(Q')\K_3(Q)$ is minimal nonabelian of type $(r,r)$. If $\lvert\Phi(Q')\K_3(Q)\rvert=2$, then
%\[Q\cong\begin{cases}
%\langle x,y\mid x^8=[x,x,y]=[y,x,y]=1,\ x^4=y^4=[x,y]^2\rangle&\text{if }r=2,\\
%\langle x,y\mid x^{2^r}=y^{2^r}=[x,y]^4=[x,x,y]=[y,x,y]=1\rangle&\text{otherwise}.
%\end{cases}\]
\end{enumerate}
\end{Proposition}
\begin{proof}
By Lemma~\ref{autqfess} we have $\lvert\N_P(Q):Q\rvert=2$.
If $Q$ is metacyclic, then we have $Q\cong Q_8$ or $Q\cong C_{2^r}^2$ for some $r\in\mathbb{N}$ by Lemma~1 in \cite{Mazurov}. Then for $|Q|\le 8$ the result follows from Lemma~\ref{maxclass}. Thus, assume $Q\cong C_{2^r}^2$ for some $r\ge 2$. Let $g\in\N_P(Q)\setminus Q$. Since $g$ acts nontrivially on $Q/\Phi(Q)$, we may assume $^gx=y$ and $^gy=x$ for $Q=\langle x,y\rangle$. We can write $g^2=(xy)^i$ for some $i\in\mathbb{Z}$, because $g$ centralizes $g^2$. Then an easy calculation shows that $gx^{-i}$ has order $2$. Hence, $\N_P(Q)\cong C_{2^r}\wr C_2$. Since $Q$ is the only abelian maximal subgroup in $\N_P(Q)$, we also have $Q\unlhd\N_P(\N_P(Q))$ and $\N_P(Q)=P$ follows.

Now consider the case where $Q$ is nonmetacyclic. Then $Q$ is also nonabelian. By Hilfssatz~III.1.11c) in \cite{Huppert} we know that $Q'/\K_3(Q)$ is cyclic. In particular $Q'/\Phi(Q')\K_3(Q)$ has order $2$. By Lemma~\ref{charmna}, $\overline{Q}:=Q/\K_3(Q)\Phi(Q')$ is minimal nonabelian.
%Therefore, the group $\overline{Q}:=Q/\K_3(Q)\Phi(Q')$ satisfies $|\overline{Q}'|=p$ and $\overline{Q}'\subseteq\Z(\overline{Q})$. Hence, $[x^2,y]=[x,y]^2=1$ for all $x,y\in\overline{Q}$. It follows that $\Phi(\overline{Q})=\langle x^2:x\in\overline{Q}\rangle\subseteq\Z(\overline{Q})$. Since $\overline{Q}$ has rank $2$, this implies that $\overline{Q}$ is minimal nonabelian. 
The case $\overline{Q}\cong Q_8$ is impossible, because $Q$ does not have maximal class (Taussky's Theorem, see Satz~III.11.9 in \cite{Huppert}).
Let $\alpha$ be an automorphism of $Q$ of order $3$. Since $\alpha$ acts nontrivially on $\overline{Q}$, Lemma~2.2 in \cite{Sambalemna} implies that $\overline{Q}$ is of type $(r,r)$ for some $r\ge 2$. 
\end{proof}

The fusion systems in the first three parts of Proposition~\ref{rank2} are determined in \cite{CravenGlesser} (see also Theorem~\ref{main} below).
Notice that we have not proved that case~\eqref{general} actually occurs. However, calculations with GAP \cite{GAP4} show that there are at least small examples and it is reasonable that many examples exist for arbitrary $r\ge 2$. However, we have no example for case~\eqref{general} where $Q\nunlhd P$. 
%In fact one can show that $Q\unlhd P$ whenever $r=2$. 
%In case $\lvert\Phi(Q')\K_3(Q)\rvert>2$ the group $Q$ is not necessarily unique anymore. 
%If in case~\eqref{general} $Q$ is the only $\mathcal{F}$-essential subgroup, then $\mathcal{F}$ is the fusion system of a finite group with Sylow $2$-subgroup $P$ (see Theorem~4.6 in \cite{Linckelmann}).

%Since every subgroup of a metacyclic group is generated by (at most) two elements, the fusion systems on the metacyclic $2$-groups can easily be determined with Proposition~\ref{rank2}.

%We remark further that all candidates for essential subgroups of $2$-rank $2$ were determined in \cite{CravenGlesser}.
%if r=2 and |R|=2, then Q=[64,19] (GAP)

\begin{Lemma}\label{rank3}
Let $\mathcal{F}$ be a fusion system on a finite $2$-group $P$. If $Q\le P$ is an $\mathcal{F}$-essential subgroup of rank $3$, then $\N_P(Q)/\Phi(Q)\cong D_8\times C_2$ or $\N_P(Q)/\Phi(Q)$ is minimal nonabelian of type $(2,1)$.
%
%one of the following holds:
%\begin{enumerate}[(i)]
%\item $Q\unlhd P$ and $P/\Phi(Q)$ is minimal nonabelian of type $(2,1)$; in particular $P$ has rank $2$.
%\item $\N_P(Q)/\Phi(Q)\cong D_8\times C_2$; in particular $\N_P(Q)$ has rank $3$.
%\end{enumerate}
\end{Lemma}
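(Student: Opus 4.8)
The plan is to analyse the extension $M:=\N_P(Q)/\Phi(Q)$ directly. Since $Q$ is $\mathcal{F}$-essential of rank $3$, Lemma~\ref{autqfess} gives $\lvert\N_P(Q):Q\rvert=2$. Writing $V:=Q/\Phi(Q)\cong C_2^3$, the group $M$ has order $16$ and contains $V$ as a normal subgroup of index $2$. Fix $g\in\N_P(Q)\setminus Q$ and let $t\in\GL(3,2)$ be the image of conjugation by $g$ on $V$. As noted before Proposition~\ref{uppertriangular}, $\N_P(Q)/Q$ acts faithfully on $Q/\Phi(Q)$, so $t\neq1$.

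Next I would determine $t$ up to a change of basis. From $t^2=1$ we get $(t-1)^2=t^2+1=0$ over $\mathbb{F}_2$, so $t$ is unipotent with Jordan blocks of size at most $2$; since $\dim V=3$ and $t\neq1$, the only possibility is one block of size $2$ and one of size $1$. Thus $[V,g]$ is $1$-dimensional and contained in the $2$-dimensional fixed space $\C_V(g)$, and I would choose a basis $v_0,v_1,v_2$ of $V$ with $[V,g]=\langle v_0\rangle$, $\C_V(g)=\langle v_0,v_1\rangle$ and ${}^gv_2=v_2v_0$, so that $v_0=[g,v_2]$. Because $g$ centralises $g^2$, we have $g^2\in\C_V(g)=\langle v_0,v_1\rangle$; the decisive distinction is whether $g^2\in\langle v_0\rangle=[V,g]$ or not, and after possibly replacing $v_1$ by $v_0v_1$ each alternative has a single normal form.

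In the first case ($g^2\in\langle v_0\rangle$), the identity $(v_2g)^2=[g,v_2]\,g^2\in\langle v_0\rangle$ shows that $\langle g,v_2\rangle$ is dihedral of order $8$, while $v_1$ is a central involution lying outside it; hence $M\cong D_8\times C_2$. In the second case ($g^2\in\langle v_0,v_1\rangle\setminus\langle v_0\rangle$), the element $g$ has order $4$ and $g^2$ is a central involution independent of $v_0=[g,v_2]$; since both $v_0$ and $g^2$ lie in $\langle g,v_2\rangle$, we get $M=\langle g,v_2\rangle$, and with $x:=g$ and $y:=v_2$ the relations $x^4=y^2=[x,y]^2=[x,x,y]=[y,x,y]=1$ hold. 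Thus $M$ is a quotient of the minimal nonabelian group of type $(2,1)$, and comparing orders (both equal to $16$) shows that $M$ is itself minimal nonabelian of type $(2,1)$.

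The two explicit identifications are routine computations in a group of order $16$. The only point that really needs care is the classification of the conjugacy type of $t$ in $\GL(3,2)$, together with the observation that it is the position of $g^2$ relative to the one-dimensional commutator space $[V,g]$—rather than its precise value—that decides which of the two groups occurs. This is the main conceptual step; everything else is direct verification of relations.
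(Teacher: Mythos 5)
Your proof is correct, and it replaces the one step the paper leaves to citation with an explicit argument. Both proofs share the same skeleton: Lemma~\ref{autqfess} gives $\lvert\N_P(Q):Q\rvert=2$, so $M:=\N_P(Q)/\Phi(Q)$ has order $16$, contains $V:=Q/\Phi(Q)\cong C_2^3$ as a normal subgroup of index $2$, and the outer element acts nontrivially (the paper uses only nontriviality; you use faithfulness, which was also established before Proposition~\ref{uppertriangular}). At that point the paper concludes at once: $M$ is a nonabelian group of order $16$ containing an elementary abelian subgroup of order $8$, hence $M\cong D_8\times C_2$ or $M$ is minimal nonabelian of type $(2,1)$ -- an appeal to the (standard but uncited) classification of groups of order $16$. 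You instead rederive exactly the needed piece of that classification by hand: since $t^2=1$ forces $(t-1)^2=0$ over $\mathbb{F}_2$, the involution $t$ has Jordan type $(2,1)$, so $[V,g]$ is one-dimensional inside the two-dimensional space $\C_V(g)$, and the dichotomy $g^2\Phi(Q)\in[V,g]$ versus $g^2\Phi(Q)\in\C_V(g)\setminus[V,g]$ decides the isomorphism type. Your computations check out: the identity $(v_2g)^2=[g,v_2]g^2$ handles the subcases $g^2=1$ and $g^2=v_0$ simultaneously and yields $\langle g,v_2\rangle\cong D_8$ with $v_1$ a central involution outside it, giving $D_8\times C_2$; in the other case $M=\langle g,v_2\rangle$ satisfies the defining relations of the type-$(2,1)$ presentation, and the order count $16=16$ makes the resulting surjection an isomorphism. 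What your route buys is self-containedness -- no external knowledge of order-$16$ groups is needed, and the mechanism (position of $g^2$ relative to the commutator line) is made visible; what it costs is length, since the paper's three-line proof isolates the same reduction and then delegates the final identification to a well-known list.
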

\begin{proof}
By Lemma~\ref{autqfess} we have $\lvert\N_P(Q):Q\rvert=2$. Since $\N_P(Q)$ acts nontrivially on $Q/\Phi(Q)$, we conclude that $\N_P(Q)/\Phi(Q)$ is nonabelian. One can check that there are only two nonabelian groups of order $16$ with an elementary abelian subgroup of order $8$. The claim follows.
%Then $\N_P(Q)/\Phi(Q)$ is minimal nonabelian of type $(2,1)$ or $\N_P(Q)/\Phi(Q)\cong D_8\times C_2$, because $\N_P(Q)/\Phi(Q)$ contains an elementary abelian subgroup of order $8$. 
%Moreover, $\N_P(Q)$ has rank at most $3$, since $\Phi(Q)\subseteq\Phi(\N_P(Q))$.
%
%If $\N_P(Q)/\Phi(Q)$ is minimal nonabelian, $Q/\Phi(Q)$ is characteristic in $\N_P(Q)/\Phi(Q)$ and $\N_P(Q)=P$ follows.
%In the other case there is nothing to show.
\end{proof}

\section{Bicyclic $2$-groups}

Janko gave the following characterization of bicyclic $2$-groups (see \cite{Janko} or alternatively §87 in \cite{Berkovich2}).
Notice that Janko defines commutators in \cite{Janko} differently than we do. 

\begin{Theorem}[Janko]\label{Janko}
A nonmetacyclic $2$-group $P$ is bicyclic if and only if $P$ has rank $2$ and contains exactly one nonmetacyclic maximal subgroup. 
\end{Theorem}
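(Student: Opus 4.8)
The plan is to prove both implications, treating the forward direction (bicyclic $\Rightarrow$ the stated properties) as the elementary part and the converse as the substance.

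\textbf{Forward direction.} Write $P=\langle x\rangle\langle y\rangle$. As $P$ is generated by the two elements $x,y$ it has rank at most $2$, and rank $1$ would make $P$ cyclic, hence metacyclic, against the hypothesis; so $P$ has rank $2$ and exactly three maximal subgroups $M_1,M_2,M_3$, the preimages of the three index-$2$ subgroups of $P/\Phi(P)\cong C_2^2$. Labelling so that $x\in M_1$, $y\in M_2$, $xy\in M_3$, Dedekind's modular law gives $M_1=\langle x\rangle(M_1\cap\langle y\rangle)$ and $M_2=(M_2\cap\langle x\rangle)\langle y\rangle$, with the intersected factors of index $2$ in $\langle y\rangle$ resp. $\langle x\rangle$; thus $M_1,M_2$ are again bicyclic. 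The real content is to upgrade ``bicyclic'' to ``metacyclic'' for $M_1,M_2$ and to identify $M_3$ as the unique nonmetacyclic maximal subgroup. Here I would invoke the structure theory recalled in the introduction: $P'$ is abelian of rank at most $2$, and since $d(P/P')=d(P)=2$ while $P/P'$ has a cyclic maximal subgroup, we get $P/P'\cong C_{2^m}\times C_2$. Using this to control the conjugation action, I would locate inside $M_1=\langle x\rangle(M_1\cap\langle y\rangle)$ a cyclic \emph{normal} subgroup with cyclic quotient, which makes $M_1$ metacyclic, and similarly for $M_2$. That exactly one maximal subgroup remains then follows because $P$ itself is nonmetacyclic: not all three maximal subgroups can be metacyclic, as otherwise $P$ would be minimal nonmetacyclic, which is impossible for a rank-$2$ group by Blackburn's classification of minimal nonmetacyclic $2$-groups.

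\textbf{Converse.} Assume $P$ is nonmetacyclic of rank $2$ with a unique nonmetacyclic maximal subgroup $M_3$; the remaining $M_1,M_2$ are metacyclic and any two of the three meet in $\Phi(P)$. I want a bicyclic decomposition. The crucial constraint is that, $P$ being nonmetacyclic, \emph{neither} factor of any bicyclic decomposition can be normal in $P$ (a normal cyclic subgroup with cyclic quotient would force $P$ metacyclic); so I must construct a genuinely non-normal product. Starting from a metacyclic maximal subgroup $M_1=\langle u\rangle\langle v\rangle$ with $\langle u\rangle\unlhd M_1$ of maximal order, I would pick $t\in P\setminus M_1$ with $t^2\in\Phi(P)$ and seek $y:=v^i t$ (for a suitable power $i$) whose order is twice that of $v$ and with $\langle u\rangle\cap\langle y\rangle$ small enough that $\langle u\rangle\langle y\rangle=P$. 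The hypothesis enters precisely here: the uniqueness of $M_3$ pins down how $t$ acts on $M_1$ and where $t^2$ sits in $\Phi(P)$, which is what makes the order of $y$ and the triviality of the relevant intersection computable.

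\textbf{Main obstacle and organisation.} The hard part is the converse, namely the explicit construction of the two cyclic generators and the verification that their product exhausts $P$. I would run both directions as an induction on $\lvert P\rvert$: a maximal subgroup of a rank-$2$ group again has rank $2$ or $3$, and $M_3$ is by the inductive hypothesis itself bicyclic, so one can try to transport a decomposition of $M_3$ upward along the extension $M_3<P$. The two pillars are Blackburn's minimal-nonmetacyclic list (to exclude the all-metacyclic-maximal case) and the facts $P'$ abelian of rank $\le 2$ and $P/P'\cong C_{2^m}\times C_2$ (to control the action). I expect the genuine difficulty to be the simultaneous bookkeeping in the converse—tracking $t^2$, the action of $t$, and the intersection $\langle u\rangle\cap\langle y\rangle$ at once—whereas the forward direction reduces, after the Dedekind step, to a finite structural case analysis; this is essentially the computation that Janko organises through the full list of presentations summarised in Theorem~\ref{Janko}.
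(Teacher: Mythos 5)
Your proposal is a plan rather than a proof, and the two steps you defer are exactly the theorem. Note first that the paper does not prove this statement at all: it quotes it from Janko \cite{Janko} (alternatively \S 87 of \cite{Berkovich2}), where it emerges from a long case-by-case classification of these groups by generators and relations; that classification is what your sketch would have to replace. In the forward direction, your Dedekind step is correct ($M_1=\langle x\rangle\langle y^2\rangle$ and $M_2=\langle x^2\rangle\langle y\rangle$ are bicyclic), the rank-$2$ claim is correct, and the appeal to Blackburn is legitimate as a citation (all minimal nonmetacyclic $2$-groups, namely $C_2^3$, $Q_8\times C_2$, $D_8\ast C_4$ and one group of order $32$, have rank $3$) --- but it only yields \emph{at least one} nonmetacyclic maximal subgroup. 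The claim that $M_1$ and $M_2$ are \emph{metacyclic}, on which ``exactly one'' entirely rests, is precisely the point where ``bicyclic'' and ``metacyclic'' diverge for $p=2$ --- the very phenomenon this paper is about --- and ``I would locate a cyclic normal subgroup with cyclic quotient'' is not an argument. The facts you quote ($P'$ abelian of rank at most $2$, $P/P'\cong C_{2^m}\times C_2$) are themselves nontrivial theorems about bicyclic $2$-groups and do not visibly produce such a normal subgroup; no mechanism is given.

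The converse is in worse shape, and its inductive scaffolding rests on a false premise. You assert that $M_3$ ``is by the inductive hypothesis itself bicyclic,'' but the unique nonmetacyclic maximal subgroup need not satisfy the hypotheses of the theorem: it need not have rank $2$. Concretely, if $P$ is minimal nonabelian of type $(2,1)$ (bicyclic, since $P=\langle x\rangle\langle xy\rangle$), the nonmetacyclic maximal subgroup is $C_2^3$; if $P\cong C_4\wr C_2=(\langle a\rangle\times\langle b\rangle)\rtimes\langle g\rangle$, it is the rank-$3$ group $\Phi(P)\langle g\rangle$. Neither has rank $2$, and neither is bicyclic, so induction can never be applied to $M_3$; and even when $M_3$ is bicyclic, $P=M_3\langle t\rangle$ is a product of \emph{three} cyclic subgroups with no argument offered for merging two of them. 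What remains of the converse --- the existence of $y=v^it$ of the right order with $\langle u\rangle\cap\langle y\rangle$ small enough that $\langle u\rangle\langle y\rangle=P$ --- is exactly the content of Janko's classification, carried out there through many pages of presentations; the sentence ``the uniqueness of $M_3$ pins down how $t$ acts \dots which is what makes the order of $y$ \dots computable'' names the difficulty but does not address it.
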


Using this, he classified all bicyclic $2$-groups in terms of generators and relations. However, it is not clear if different parameters in his paper give nonisomorphic groups. In particular the number of isomorphism types of bicyclic $2$-groups is unknown.

As a corollary of Theorem~\ref{Janko} we obtain the structure of the automorphism group of a bicyclic $2$-group.

\begin{Proposition}\label{biaut}
Let $P$ be a bicyclic $2$-group such that $\Aut(P)$ is not a $2$-group. Then $P$ is homocyclic or a quaternion group of order $8$. In particular $P$ is metacyclic.
\end{Proposition}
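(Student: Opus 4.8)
The plan is to manufacture an automorphism of order $3$ and then feed the resulting symmetry into Janko's characterisation (Theorem~\ref{Janko}). First I would dispose of the cyclic case: if $P$ is cyclic then $\Aut(P)$ is a $2$-group, so the hypothesis forces $P$ to be noncyclic. Since $P=\langle x\rangle\langle y\rangle$ is generated by two elements, it then has rank exactly $2$, i.e.\ $V:=P/\Phi(P)\cong C_2^2$. The restriction map $\Aut(P)\to\Aut(V)=\GL(2,2)\cong S_3$ has a $2$-group as kernel (the same well-known fact about the kernel of $F$ used in Section~2), so if $\Aut(P)$ is not a $2$-group, then $3$ divides $|\Aut(P)|$ and Cauchy's theorem yields some $\alpha\in\Aut(P)$ of order $3$. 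As $\alpha$ cannot lie in the kernel, it maps to an element of order $3$ of $\GL(2,2)$, which acts on $V$ with characteristic polynomial $X^2+X+1$. In particular $\alpha$ has no nonzero fixed point on $V$ and permutes the three one-dimensional subspaces of $V$ in a $3$-cycle.

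The key step is to translate this into a statement about maximal subgroups. The three maximal subgroups $M_1,M_2,M_3$ of $P$ correspond bijectively to the three hyperplanes of $V$, so $\alpha$ permutes them in a $3$-cycle and therefore carries each $M_i$ isomorphically onto $M_{i+1}$. Hence $M_1\cong M_2\cong M_3$. This is the only place the hypothesis on $\Aut(P)$ enters, and it is the structural heart of the argument. Now suppose for contradiction that $P$ is nonmetacyclic. Being bicyclic and noncyclic, $P$ falls under Theorem~\ref{Janko}, which asserts that \emph{exactly one} of its maximal subgroups is nonmetacyclic. But $M_1,M_2,M_3$ are mutually isomorphic, so they are either all metacyclic or all nonmetacyclic, contradicting ``exactly one''. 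Therefore $P$ is metacyclic, which already settles the final assertion of the proposition.

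It remains to pin down the metacyclic groups. Here $\alpha$ still acts on the abelianisation $P/P'$, an abelian $2$-group of rank $2$ on which $\alpha$ has no nonzero fixed point modulo its Frattini subgroup (this quotient being again $V$). Since $3$ is invertible modulo $2^e$, coprime action together with the factorisation $X^3-1=(X-1)(X^2+X+1)$ into coprime factors splits $P/P'$ as $\C_{P/P'}(\alpha)\oplus[P/P',\alpha]$; the first summand lies in the Frattini subgroup and, being a direct summand, is trivial, so $\alpha^2+\alpha+1=0$ on $P/P'$. Thus $P/P'$ is a module over the chain ring $(\mathbb{Z}/2^e)[X]/(X^2+X+1)$, whose residue field is $\mathbb{F}_4$, and a rank-$2$ module of this kind is homocyclic. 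Hence $P/P'\cong C_{2^s}^2$, and when $P$ is abelian we are done.

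The main obstacle is the nonabelian metacyclic case, where $P'\neq1$ is cyclic. Here the condition that $M_1,M_2,M_3$ be isomorphic is genuinely finer than having $P/P'$ homocyclic, and finer than $\Omega_1(P)\le\Phi(P)$: both of the latter already hold for the minimal nonabelian group $\langle a,b\mid a^8=b^4=1,\ ^ba=a^5\rangle$ of order $2^5$, whose three maximal subgroups are $C_8\times C_2$, $C_4^2$ and $C_8\times C_2$, hence not all isomorphic. Consequently the finish must lean on the explicit list of metacyclic $2$-groups: applying the isomorphic-maximal-subgroups test across that list should leave exactly the homocyclic groups together with the generalised quaternion groups $Q_{2^n}$, and for $n\ge 4$ the maximal subgroups of $Q_{2^n}$ split as one cyclic and two quaternion ones, so only $Q_8$ (with its three maximal subgroups all $\cong C_4$) survives. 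Carrying out this comparison across the whole metacyclic classification — or, equivalently, invoking the automorphism-group description from \cite{Sambale} — is where the real case work lies; the reduction to the metacyclic situation via Janko's theorem is the conceptually decisive part.
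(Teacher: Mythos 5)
Your reduction to the metacyclic case is exactly the paper's own argument: the kernel of $\Aut(P)\to\Aut(P/\Phi(P))$ is a $2$-group, so an automorphism of order $3$ permutes the three maximal subgroups of the rank-$2$ group $P$ transitively, making them pairwise isomorphic, which contradicts the ``exactly one nonmetacyclic maximal subgroup'' clause of Theorem~\ref{Janko}. That part is correct and settles the final assertion of the proposition.

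The genuine gap is the metacyclic half of the statement, which you never actually prove. Your coprime-action argument correctly gives $P/P'\cong C_{2^s}^2$ and disposes of the abelian case, but for nonabelian metacyclic $P$ you only describe a program (``applying the isomorphic-maximal-subgroups test across that list \emph{should} leave exactly the homocyclic groups together with the generalised quaternion groups'') and you explicitly concede that carrying out this comparison is ``where the real case work lies''. A plan is not a proof, and your own example of the minimal nonabelian group of order $32$ shows that the required case analysis is not a formality: conditions that look close to sufficient are not. The paper closes this case in one line by citing Lemma~1 of \cite{Mazurov}, which states precisely the dichotomy you need — a metacyclic $2$-group whose automorphism group is not a $2$-group is homocyclic or $Q_8$ — and which is the same lemma already invoked in the proof of Proposition~\ref{rank2}; that, rather than \cite{Sambale} (a classification of fusion systems, not of automorphism groups), is the reference that completes your argument. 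With Mazurov's lemma in place of your sketch the proof is complete; note also that the paper runs the two halves in the opposite order, first removing the metacyclic case via Mazurov and only then deriving the contradiction for nonmetacyclic $P$ from Theorem~\ref{Janko}, but the logical content of that reduction is identical to yours.
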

\begin{proof}
By Lemma~1 in \cite{Mazurov} we may assume that $P$ is nonmetacyclic. Since $P$ has rank $2$, every nontrivial automorphism of odd order permutes the maximal subgroups of $P$ transitively. By Theorem~\ref{Janko} such an automorphism cannot exists.
\end{proof}

As another corollary of Theorem~\ref{Janko} we see that every subgroup of a bicyclic $2$-group contains a metacyclic maximal subgroup. Since quotients of bicyclic groups are also bicyclic, it follows that every section of a bicyclic $2$-group has rank at most $3$. This will be used in the following without an explicit comment. Since here and in the following the arguments are very specific (i.\,e. not of general interest), we will sometimes apply computer calculations in order to handle small cases.

\begin{Proposition}\label{rank2ess}
Let $\mathcal{F}$ be a fusion system on a bicyclic, nonmetacyclic $2$-group $P$. Suppose that $P$ contains an $\mathcal{F}$-essential subgroup $Q$ of rank $2$. Then $Q\cong C_{2^m}^2$ and $P\cong C_{2^m}\wr C_2$ for some $m\ge 2$. Moreover, $\mathcal{F}=\mathcal{F}_P(C_{2^m}^2\rtimes S_3)$ or $\mathcal{F}=\mathcal{F}_P(\PSL(3,q))$ for some $q\equiv 1\pmod{4}$.
\end{Proposition}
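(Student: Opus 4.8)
The plan is to combine the classification of rank-$2$ essential subgroups in Proposition~\ref{rank2} with the special structure of bicyclic groups, and then to pin down $\mathcal{F}$ by an explicit analysis of $\Aut_{\mathcal{F}}(Q)$ through Alperin's Fusion Theorem.

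First I would apply Proposition~\ref{rank2} to the given rank-$2$ essential subgroup $Q$. Of the four listed possibilities, cases~(i) and~(ii) force $P\in\{D_{2^n},SD_{2^n},Q_{2^n}\}$, all of which are metacyclic and hence excluded by hypothesis. To rule out case~(iv) I would use that a bicyclic $2$-group has $2$-rank at most $2$ — equivalently, a product of two cyclic subgroups contains no elementary abelian subgroup of order $8$. Indeed, in case~(iv) the quotient $Q/\Phi(Q')\K_3(Q)$ is minimal nonabelian of type $(r,r)$ with $r\ge2$, whose center is elementary abelian of rank $3$; one checks that $Q$ itself then contains a copy of $C_2^3$, and so does $P$, a contradiction. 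This leaves case~(iii): $Q\cong C_{2^m}^2$ and $P\cong C_{2^m}\wr C_2$ with $m\ge2$, a group that is genuinely bicyclic and nonmetacyclic.

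Next I would set up the fusion-theoretic data on $P=C_{2^m}\wr C_2$. Here $Q$ is the base group, a normal self-centralizing abelian subgroup of index $2$, and I would first verify that it is the only $\mathcal{F}$-essential subgroup. Since $\Aut(P)$ is a $2$-group by Proposition~\ref{biaut}, saturation gives $\Aut_{\mathcal{F}}(P)=\Inn(P)$, so $P$ contributes no new morphisms. The group $\Aut_P(Q)=\N_P(Q)/Q$ is generated by the involution $\tau$ swapping the two cyclic factors, while $\Out_{\mathcal{F}}(Q)=\Aut_{\mathcal{F}}(Q)$ reduces isomorphically into $\GL(2,2)\cong S_3$ and contains a strongly $2$-embedded subgroup, hence is isomorphic to $S_3$ with $\langle\tau\rangle$ as a Sylow $2$-subgroup. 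By Alperin's Fusion Theorem $\mathcal{F}=\langle\Aut_{\mathcal{F}}(P),\Aut_{\mathcal{F}}(Q)\rangle$ is therefore completely determined by the subgroup $S_3\cong\Aut_{\mathcal{F}}(Q)\le\Aut(Q)=\GL(2,\mathbb{Z}/2^m\mathbb{Z})$ containing the fixed reflection $\tau$.

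The decisive step is to enumerate these subgroups. I would show that, with $\tau$ fixed, there are exactly two subgroups isomorphic to $S_3$ in $\GL(2,\mathbb{Z}/2^m\mathbb{Z})$ up to the conjugation induced by $\Aut(P)$ — distinguished by the isomorphism type of $Q$ as a $\mathbb{Z}/2^m\mathbb{Z}\,[S_3]$-module, i.e.\ by the $\Aut(Q)$-conjugacy class of the order-$3$ element relative to $\tau$. Finally I would exhibit a finite group realizing each: the semidirect product $C_{2^m}^2\rtimes S_3$ (companion action, in which $\langle\tau\rangle$ is a Sylow $2$-subgroup of the acting $S_3$) realizes one, while $\PSL(3,q)$ with $q\equiv1\pmod4$ and $(q-1)_2=2^m$ — where $Q$ is a Sylow $2$-subgroup of the diagonal torus and the Weyl group induces $S_3$ — realizes the other; in particular both fusion systems are nonexotic. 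I expect this last part to be the main obstacle: proving that the count is \emph{precisely} two for every $m$, that the two are genuinely distinct saturated fusion systems, and that they match the two group-theoretic realizations. The remainder is bookkeeping with Proposition~\ref{rank2}, Proposition~\ref{biaut} and Alperin's Fusion Theorem.
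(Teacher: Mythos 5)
Your reduction to case~(iii) of Proposition~\ref{rank2} is fine for cases~(i) and (ii), but your exclusion of case~(iv) rests on a false claim: it is \emph{not} true that a bicyclic $2$-group has $2$-rank at most $2$. The paper itself supplies many counterexamples: the minimal nonabelian group $C_2^2\rtimes C_{2^n}$ of type $(n,1)$ is bicyclic and contains $C_2^3$ (this is exactly the situation of Proposition~\ref{E8normal} and case~(8) of Theorem~\ref{main}), and the bicyclic groups in cases~(9)--(13) of Theorem~\ref{main} have $\mathcal{F}$-essential subgroups $C_{2^{m-1}}\times C_2^2$ or $C_{2^{m-1}}\times Q_8$ of $2$-rank $3$. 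The correct general bound is that every \emph{section} of a bicyclic $2$-group has rank at most $3$, which does not exclude $C_2^3$. (Two smaller inaccuracies in the same step: the centre of a minimal nonabelian group of type $(r,r)$ is $C_{2^{r-1}}^2\times C_2$, which is not elementary abelian for $r>2$; and a copy of $C_2^3$ in the quotient $Q/\Phi(Q')\K_3(Q)$ does not automatically lift to a copy in $Q$.) The paper excludes case~(iv) by showing instead that $Q$ must be \emph{metacyclic}: if $Q$ lies in a metacyclic maximal subgroup $M$ of $P$ (such $M$ exists by Theorem~\ref{Janko}) this is clear; otherwise $Q\cap M$ is a metacyclic maximal subgroup of $Q$, the order-$3$ automorphism coming from $\Out_{\mathcal{F}}(Q)\cong S_3$ permutes the three maximal subgroups of $Q$ transitively, so all are metacyclic, and Proposition~2.2 in \cite{Janko} then forces the rank-$2$ group $Q$ to be metacyclic. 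This rules out case~(iv), since there $Q$ has a quotient which is minimal nonabelian of type $(r,r)$, $r\ge 2$, hence nonmetacyclic.

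The second gap is independent and equally fatal: your assertion that $Q$ is the only $\mathcal{F}$-essential subgroup is never proved (the computation $\Aut_{\mathcal{F}}(P)=\Inn(P)$ says nothing about other essential subgroups), and it is false. The hypothesis only provides \emph{some} essential subgroup of rank $2$; essential subgroups of rank $3$ may coexist, and in $\mathcal{F}_P(\PSL(3,q))$ the subgroup $C_{2^m}\mathop{\ast}Q_8$ is $\mathcal{F}$-essential alongside $C_{2^m}^2$ (Theorem~5.3 in \cite{CravenGlesser}, quoted in case~(7) of Theorem~\ref{main}). Consequently $\mathcal{F}$ is \emph{not} determined by $\Aut_{\mathcal{F}}(Q)\le\Aut(Q)$ via Alperin's Fusion Theorem, and your decisive step cannot succeed even in principle: the two fusion systems $\mathcal{F}_P(C_{2^m}^2\rtimes S_3)$ and $\mathcal{F}_P(\PSL(3,q))$ induce the \emph{same} $S_3\le\Aut(Q)$ up to an automorphism of $P$. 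Indeed, in both realizations $Q$ is the module $\{(\alpha_1,\alpha_2,\alpha_3)\in C_{2^m}^3:\alpha_1\alpha_2\alpha_3=1\}$ with $S_3$ permuting coordinates (the diagonal torus of $\PSL(3,q)$, respectively the action required to make the Sylow $2$-subgroup a wreath product); one can even check that all subgroups of $\GL(2,\mathbb{Z}/2^m\mathbb{Z})$ isomorphic to $S_3$ are conjugate, since the kernel of reduction modulo $2$ is a $2$-group. The two fusion systems are distinguished solely by whether $C_{2^m}\mathop{\ast}Q_8$ is also essential, which your framework never detects; so your enumeration would produce one fusion system, not two. This is precisely why the paper does not attempt to classify $S_3$-subgroups of $\Aut(Q)$, but instead invokes the classification of all fusion systems on $C_{2^m}\wr C_2$ from Theorem~5.3 in \cite{CravenGlesser}, exactly two of which have $C_{2^m}^2$ as an essential subgroup.
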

\begin{proof}
By Proposition~\ref{rank2} it suffices for the first claim to show that $Q$ is metacyclic, since minimal nonabelian groups of type $(m,m)$ for $m\ge 2$ are nonmetacyclic (see Proposition~2.8 in \cite{Janko}).
Let $M\le P$ be a metacyclic maximal subgroup of $P$. We may assume $Q\nsubseteq M$. Then $M\cap Q$ is a maximal subgroup of $Q$. Since $Q$ admits an automorphism of order $3$, the maximal subgroups of $Q$ are isomorphic. Now the first claim follows from Proposition~2.2 in \cite{Janko}. The fusion systems on $C_{2^m}\wr C_2$ are given by Theorem~5.3 in \cite{CravenGlesser}. Two of them have $C_{2^m}^2$ as essential subgroup.
\end{proof}

It can be seen that the group $C_{2^m}\wr C_2$ is in fact bicyclic.
Observe that Theorem~5.3 in \cite{CravenGlesser} provides another nonnilpotent fusion system on $C_{2^m}\wr C_2$.
For the rest of this paper we consider the case where the bicyclic, nonmetacyclic $2$-group $P$ has no $\mathcal{F}$-essential subgroup of rank $2$. 

\begin{Definition}\label{equal}
Two fusion systems $\mathcal{F}$ and $\mathcal{F}'$ on a finite $p$-group $P$ are \emph{isomorphic} if there is an automorphism $\gamma\in\Aut(P)$ such that
\[\Hom_{\mathcal{F}'}(\gamma(S),\gamma(T))=\gamma(\Hom_{\mathcal{F}}(S,T)):=\{\gamma\circ\phi\circ\gamma^{-1}:\phi\in\Hom_{\mathcal{F}}(S,T)\}\]
for all subgroups $S,T\le P$. 
\end{Definition}
Observe that if $\gamma$ is an inner automorphism of $P$, then $\Hom_{\mathcal{F}}(\gamma(S),\gamma(T))=\gamma(\Hom_{\mathcal{F}}(S,T))$ for all $S,T\le P$. In the following we consider fusion systems only up to isomorphism.

\begin{Proposition}\label{E8normal}
Let $\mathcal{F}$ be a nonnilpotent fusion system on a bicyclic $2$-group $P$. Suppose that $P$ contains an elementary abelian normal subgroup of order $8$. Then $P$ is minimal nonabelian of type $(n,1)$ for some $n\ge 2$ and $C_{2^{n-1}}\times C_2^2$ is the only $\mathcal{F}$-essential subgroup of $P$. Moreover, $\mathcal{F}=\mathcal{F}_P(A_4\rtimes C_{2^n})$ where $C_{2^n}$ acts as a transposition in $\Aut(A_4)\cong S_4$ (thus $A_4\rtimes C_{2^n}$ is unique up to isomorphism).
\end{Proposition}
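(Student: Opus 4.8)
The plan is to reduce to essential subgroups of rank exactly $3$, then to prove that $P$ is minimal nonabelian, and finally to identify the essential subgroup and recognise $\mathcal{F}$ inside $A_4\rtimes C_{2^n}$.

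First I would settle the reduction. Since $P$ contains $C_2^3$ and every section of a bicyclic $2$-group has rank at most $3$, the $2$-rank of $P$ equals $3$ and $P$ is nonmetacyclic. As $\mathcal{F}$ is nonnilpotent, some $\mathcal{F}$-essential subgroup exists. A cyclic group has a $2$-group of automorphisms and hence cannot be essential, and by the standing hypothesis there is no essential subgroup of rank $2$; therefore every $\mathcal{F}$-essential subgroup $Q$ has rank exactly $3$. For such $Q$, Lemma~\ref{autqfess} gives $\Out_{\mathcal{F}}(Q)\cong S_3$ and $\lvert\N_P(Q):Q\rvert=2$, Lemma~\ref{rank3} describes $\N_P(Q)/\Phi(Q)$, and Lemma~\ref{core} provides $K:=\Core_P(Q)\neq1$ with $\N_P(Q)/K=Q/K\times\Z(P/K)$. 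Moreover, by Theorem~\ref{Janko} the group $P$ has a \emph{unique} nonmetacyclic maximal subgroup $N$; since a subgroup of rank $3$ is nonmetacyclic and the two remaining maximal subgroups are metacyclic, both $E$ and every rank-$3$ essential subgroup can only be contained in $N$.

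The heart of the proof is to show that $P$ is minimal nonabelian, that is, since $P$ has rank $2$, that $\lvert P'\rvert=2$. Here I would combine the local data at an essential subgroup $Q$ — the order-$3$ automorphism in $\Out_{\mathcal{F}}(Q)\cong S_3$ acts on $Q/\Phi(Q)\cong C_2^3$ with a one-dimensional fixed space and a two-dimensional rotated complement, and $\N_P(Q)/\Phi(Q)$ is one of the two groups in Lemma~\ref{rank3} — with the global structure of a bicyclic, nonmetacyclic $2$-group: $P'$ is abelian of rank at most $2$ and $P/P'$ contains a cyclic maximal subgroup, so that $P/P'\cong C_{2^n}\times C_2$. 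The normal subgroup $E\cong C_2^3$ maps into $\Omega(P/P')\cong C_2^2$, whence $E\cap P'\neq1$; this ties $P'$ to $E$ and shows that the order-$3$ automorphism genuinely fuses involutions of $E$. Tracking this fusion against the abelian, rank-at-most-$2$ structure of $P'$ should force $\lvert P'\rvert=2$. Controlling $P'$ through this interplay of the local essential data with the bicyclic constraints (the rank-$\le3$ bound on sections, the shape of $P/P'$, and the fusion of the involutions of $E$) is the main obstacle, and I expect it to require a short case analysis according to the two possibilities in Lemma~\ref{rank3} and the size of $E\cap P'$.

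Once $\lvert P'\rvert=2$ is known, $P$ is minimal nonabelian, and combining this with $P/P'\cong C_{2^n}\times C_2$ identifies $P$ as minimal nonabelian of type $(n,1)$; one needs $n\ge2$ to accommodate $E$, since type $(1,1)$ is $D_8$ of $2$-rank $2$. The essential subgroup is then forced to be $\C_P(E)=C_{2^{n-1}}\times C_2^2$, which is exactly the unique nonmetacyclic maximal subgroup $N$ and is therefore normal. For uniqueness I would argue that any essential subgroup $R$ has rank $3$ and so lies in $N$; as $N$ is abelian, $R<N$ would give $\C_P(R)\ge N\supsetneq R$, contradicting that $R$ is $\mathcal{F}$-centric. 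Hence $C_{2^{n-1}}\times C_2^2$ is the only $\mathcal{F}$-essential subgroup.

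Finally I would realise $\mathcal{F}$ inside $G:=A_4\rtimes C_{2^n}$. A Sylow $2$-subgroup of $G$ is $V\rtimes C_{2^n}$ with $V\cong C_2^2$, and a direct computation shows it is minimal nonabelian of type $(n,1)$, hence isomorphic to $P$; moreover $Q:=\C_G(V)=V\times C_{2^{n-1}}\cong C_{2^{n-1}}\times C_2^2$ is normal with $\N_G(Q)/\C_G(Q)=G/Q\cong S_3$, the element of order $3$ cycling the three involutions of $V$ and fixing the cyclic factor, matching the action found above. Since $P$ is nonmetacyclic and neither homocyclic nor quaternion of order $8$, Proposition~\ref{biaut} shows that $\Aut(P)$ is a $2$-group, so $\Aut_{\mathcal{F}}(P)\le\Aut(P)$ is a $2$-group and the saturation axiom forces $\Aut_{\mathcal{F}}(P)=\Inn(P)$; the same holds in $\mathcal{F}_P(G)$ because $\N_G(P)=P$. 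As both fusion systems have the single essential subgroup $Q$ with identical automizer $S_3$ and identical $\Aut_{\mathcal{F}}(P)=\Inn(P)$, Alperin's Fusion Theorem yields $\mathcal{F}=\mathcal{F}_P(G)$ in the sense of Definition~\ref{equal}. Uniqueness of $A_4\rtimes C_{2^n}$ up to isomorphism is immediate since all transpositions of $\Aut(A_4)\cong S_4$ are conjugate.
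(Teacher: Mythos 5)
You follow the same skeleton as the paper --- reduce to essential subgroups of rank $3$, show $P$ is minimal nonabelian of type $(n,1)$, identify $C_{2^{n-1}}\times C_2^2$ as the unique essential subgroup, and realize $\mathcal{F}$ in $A_4\rtimes C_{2^n}$ --- and your reduction, your uniqueness argument for the essential subgroup, and your construction of $G$ are sound. But the two steps that carry all the weight are missing.

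First, the step you yourself call ``the main obstacle'' --- proving $\lvert P'\rvert=2$ --- is never carried out: ``should force $\lvert P'\rvert=2$'' and ``I expect it to require a short case analysis'' are statements of intent, not proof. The paper settles this by splitting on $\lvert P'\rvert$ and invoking Janko's classification of bicyclic $2$-groups with a normal elementary abelian subgroup of order $8$: Theorem~4.1 in \cite{Janko} gives the type $(n,1)$ conclusion when $\lvert P'\rvert=2$, and when $\lvert P'\rvert>2$ the explicit presentation of Theorem~4.2 in \cite{Janko} is what makes a contradiction possible. There, any essential subgroup $Q$ lies in the unique nonmetacyclic maximal subgroup $M=E\langle a^2\rangle$ and, being $\mathcal{F}$-centric, contains $\Z(M)=\langle a^4,u\rangle$; hence $Q$ is one of only three subgroups. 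Two of them contain $P'$, so they are normal of index $4$ in $P$, contradicting $\lvert\N_P(Q):Q\rvert=2$ from Lemma~\ref{autqfess}; and $Q=M$ is impossible because an order-$3$ automorphism acts freely on $M/\Z(M)\cong C_2^2$, hence permutes the three subgroups $L$ with $\Z(M)<L<M$ transitively, while these are pairwise nonisomorphic. Your alternative route (via $E\cap P'\neq 1$ and $P/P'\cong C_{2^n}\times C_2$) does not engage with this structure, and nothing in your sketch indicates how the case $\lvert P'\rvert>2$ would actually be eliminated.

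Second, your uniqueness argument for $\mathcal{F}$ is a non sequitur. That $\mathcal{F}$ and $\mathcal{F}_P(G)$ both have the single essential subgroup $Q$ with automizer \emph{abstractly} isomorphic to $S_3$ does not yield $\mathcal{F}=\mathcal{F}_P(G)$: by Alperin's Fusion Theorem the fusion system is determined by the concrete subgroup $\Aut_{\mathcal{F}}(Q)\le\Aut(Q)$, and equality in the sense of Definition~\ref{equal} requires this subgroup to be unique up to composition with an automorphism of $P$. The paper spends half of its proof on precisely this point: the order-$3$ automorphism must act on one of the two $\Aut_P(Q)$-invariant four-subgroups $\langle y,z\rangle$, $\langle x^{2^{n-1}}y,z\rangle$, and its fixed-point subgroup must be $\langle x^2\rangle$ or $\langle x^2z\rangle$; the substitutions $y\mapsto x^{2^{n-1}}y$ and $x\mapsto xy$ are automorphisms of $P$ identifying all of these choices. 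This care is not pedantry: in case~\eqref{dsp} of Theorem~\ref{main} (with $i=n$) the paper exhibits two \emph{nonequal} saturated fusion systems sharing the same essential subgroup and the same abstract automizer, distinguished only by whether $\Z(\mathcal{F})$ is generated by a square in $P$. So the principle you invoke fails in general and must be replaced by an explicit analysis of the action of $\Aut_{\mathcal{F}}(Q)$ on $Q$.
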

\begin{proof}
By hypothesis, $P$ is nonmetacyclic.
Suppose first $|P'|=2$. Then $P$ is minimal nonabelian of type $(n,1)$ for some $n\ge 2$ by Theorem~4.1 in \cite{Janko}. 
We show that $P$ contains exactly one $\mathcal{F}$-essential subgroup $Q$. Since $P$ is minimal nonabelian, every selfcentralizing subgroup is maximal. Moreover, $Q$ has rank $3$ by Proposition~\ref{rank2ess}. Hence, $Q=\langle x^2,y,z\rangle\cong C_{2^{n-1}}\times C_2^2$ is the unique nonmetacyclic maximal subgroup of $P$ (notation from \eqref{mnatype} in the introduction). 
We prove that $\mathcal{F}$ is unique up to isomorphism. By Alperin's Fusion Theorem and Proposition~\ref{biaut} it suffices to describe the action of $\Aut_{\mathcal{F}}(Q)$ on $Q$. First of all $P=\N_P(Q)$ acts on only two four-subgroups $\langle y,z\rangle$ and $\langle x^{2^{n-1}}y,z\rangle$ of $Q$ nontrivially.
%interchanges $y$ with $yz$ and $x^2y$ with $x^2yz$. 
Let $\alpha\in\Aut_{\mathcal{F}}(Q)$ of order $3$. Then $\alpha$ is unique up to conjugation in $\Aut(Q)$, since $\langle\alpha\rangle\in\Syl_3(\Aut(Q))$ and $\Aut(Q)$ is not $3$-nilpotent. Hence, $\alpha$ acts on only one four-subgroup $R$ of $Q$. Let $\beta\in P/Q\le\Aut_{\mathcal{F}}(Q)$. Then $(\alpha\beta)(R)=(\beta\alpha^{-1})(R)=\beta(R)=R$, since $\Aut_{\mathcal{F}}(Q)\cong S_3$ by Lemma~\ref{autqfess}. 
Thus, $\Aut_{\mathcal{F}}(Q)$ acts (nontrivially) on $\langle y,z\rangle$ or on $\langle x^{2^{n-1}}y,z\rangle$. It can be seen easily that the elements $x$ and $x^{2^{n-1}}y$ satisfy the same relations as $x$ and $y$. Hence, after replacing $y$ by $x^{2^{n-1}}y$ if necessary, we may assume that $\Aut_{\mathcal{F}}(Q)$ acts on $\langle y,z\rangle$. Since $\C_Q(\alpha)\cong C_{2^{n-1}}$, we see that $x^2y\notin\C_Q(\alpha)$ or $x^2yz\notin\C_Q(\alpha)$. But then both $x^2y,x^2yz\notin\C_Q(\alpha)$, because $\beta(x^2y)=x^2yz$. Hence $\C_Q(\alpha)=\C_Q(\Aut_{\mathcal{F}}(Q))\in\{\langle x^2\rangle,\langle x^2z\rangle\}$. However, $xy$ and $y$ fulfill the same relations as $x$ and $y$. Hence, after replacing $x$ by $xy$ if necessary, we have 
$\C_Q(\Aut_{\mathcal{F}}(Q))=\langle x^2\rangle$. This determines the action of $\Aut_{\mathcal{F}}(Q)$ on $Q$ completely. In particular $\mathcal{F}$ is uniquely determined up to isomorphism. The group $G=A_4\rtimes C_{2^n}$ as described in the proposition has a minimal nonabelian Sylow $2$-subgroup of type $(n,1)$. Since $A_4$ is not $2$-nilpotent, $\mathcal{F}_P(G)$ is not nilpotent. It follows that $\mathcal{F}=\mathcal{F}_P(G)$. 

Now suppose $|P'|>2$. Then Theorem~4.2 in \cite{Janko} describes the structure of $P$. We use the notation of this theorem. 
Let $Q<P$ be $\mathcal{F}$-essential. By Proposition~\ref{rank2ess}, $Q$ has rank $3$. In particular $Q$ is contained in the unique nonmetacyclic maximal subgroup $M:=E\langle a^2\rangle$ of $P$.
Since $\langle a^4,u\rangle=\Z(M)<Q$, it follows that $Q\in\{\langle a^4,u,v\rangle,\langle a^4,a^2v,u\rangle,M\}$. In the first two cases we have $P'=\langle u,z\rangle\subseteq Q\unlhd P$ which contradicts Lemma~\ref{autqfess}. Hence, $Q=M$.
%One can show that $\Phi(M)=\Z(P)=\langle a^4\rangle$. 
Every automorphism of $M$ of order $3$ acts nontrivially on $M/\Phi(M)$ and thus freely on $M/\Z(M)\cong C_2^2$. However, the subgroups $L\le M$ such that $\Z(M)<L<M$ are nonisomorphic. Contradiction.
\end{proof}

It remains to deal with the case where $P$ does not contain an elementary abelian normal subgroup of order $8$. In particular Theorem~4.3 in \cite{Janko} applies.

\begin{Lemma}\label{rank3bi}
Let $\mathcal{F}$ be a fusion system on a bicyclic $2$-group $P$. If $Q\le P$ is $\mathcal{F}$-essential of rank $3$, then one of the following holds:
\begin{enumerate}[(i)]
\item $Q\unlhd P$ and $P/\Phi(Q)$ is minimal nonabelian of type $(2,1)$.
\item $Q\nunlhd P$ and $P/\Phi(Q)\cong D_8\times C_2$.
\end{enumerate}
\end{Lemma}
\begin{proof}
By Lemma~\ref{rank3} we always have that $\N_P(Q)/\Phi(Q)$ is minimal nonabelian of type $(2,1)$ or isomorphic to $D_8\times C_2$. In case $\N_P(Q)=P$ only the first possibilities can apply, since $P$ has rank $2$. Now assume that $Q\nunlhd P$ and $\N_P(Q)/\Phi(Q)$ is minimal nonabelian of type $(2,1)$. Take $g\in\N_P(\N_P(Q))\setminus\N_P(Q)$ such that $g^2\in\N_P(Q)$. Then $Q_1:={^gQ}\ne Q$ and $Q_1\cap Q$ is $\langle g\rangle$-invariant. Moreover, $\Phi(Q)\subseteq\Phi(\N_P(Q))\subseteq Q_1$ and  
\[\lvert\Phi(Q):\Phi(Q)\cap\Phi(Q_1)\rvert= \lvert\Phi(Q_1):\Phi(Q)\cap\Phi(Q_1)\rvert= \lvert\Phi(Q_1)\Phi(Q):\Phi(Q)\rvert=\lvert\Phi(Q_1/\Phi(Q))\rvert=2,\]
since $Q_1/\Phi(Q)$ ($\ne Q/\Phi(Q)$) is abelian of rank $2$.
Hence, $\N_P(Q)/\Phi(Q)\cap\Phi(Q_1)$ is a group of order $32$ of rank $2$ with two distinct normal subgroups of order $2$ such that their quotients are isomorphic to the minimal nonabelian group of type $(2,1)$. It follows that $\N_P(Q)/\Phi(Q)\cap\Phi(Q_1)$ is the minimal nonabelian group of type $(2,2)$ (this can be checked by computer). However, then all maximal subgroups of $\N_P(Q)/\Phi(Q)\cap\Phi(Q_1)$ have rank $3$ which contradicts Theorem~\ref{Janko}. Thus, we have proved that $\N_P(Q)/\Phi(Q)\cong D_8\times C_2$.
\end{proof}

We are in a position to determine all $\mathcal{F}$-essential subgroups of rank $3$ on a bicyclic $2$-group. This is a key result for the rest of the paper. 

\begin{Proposition}\label{biess}
Let $\mathcal{F}$ be a fusion system on a bicyclic $2$-group $P$. If $Q\le P$ is $\mathcal{F}$-essential of rank $3$, then one of the following holds:
\begin{enumerate}[(i)]
\item $Q\cong C_{2^m}\times C_2^2$ for some $m\ge 1$.
\item $Q\cong C_{2^m}\times Q_8$ for some $m\ge 1$.
\item $Q\cong C_{2^m}\mathop{\ast} Q_8$ for some $m\ge 2$.
\end{enumerate}
\end{Proposition}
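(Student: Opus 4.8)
The plan is to exploit the order-$3$ automorphism forced by essentiality and to isolate a rank-$2$ ``$\alpha$-block'' of $Q$ on which it acts without fixed points. By Lemma~\ref{autqfess} we have $\Out_{\mathcal{F}}(Q)\cong S_3$ and $\lvert\N_P(Q):Q\rvert=2$, so we may fix $\alpha\in\Aut_{\mathcal{F}}(Q)$ of order $3$ acting nontrivially on $Q/\Phi(Q)\cong C_2^3$. Since an element of order $3$ in $\GL(3,2)$ has characteristic polynomial $(X-1)(X^2+X+1)$ over $\mathbb{F}_2$, the automorphism $\alpha$ fixes a line $W$ and acts freely (as multiplication by a primitive cube root of unity) on a complementary plane $V\le Q/\Phi(Q)$. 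Throughout I would use that $Q$ is $\mathcal{F}$-centric, so $\C_P(Q)=\Z(Q)$, and that $Q$, being a subgroup of the bicyclic group $P$, contains a metacyclic maximal subgroup while all sections of $P$ have rank at most $3$.

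First I would reduce to a rank-$2$ subgroup. Put $D:=[Q,\alpha]$, an $\alpha$-invariant subgroup with $D\Phi(Q)/\Phi(Q)=[Q/\Phi(Q),\alpha]=V$. By coprime action $Q=\C_Q(\alpha)D$ and $D=[D,\alpha]$; the latter forces the fixed space $\C_{D/\Phi(D)}(\alpha)$ to vanish, so the $\mathbb{F}_2\langle\alpha\rangle$-module $D/\Phi(D)$ is a sum of copies of the $2$-dimensional (i.e.\ $\mathbb{F}_4$) module and hence has even rank. As $D$ is a section of the bicyclic group $P$, its rank is at most $3$, so $D$ has rank exactly $2$ and $\alpha$ acts freely on $D/\Phi(D)$, while $\C_Q(\alpha)$ maps onto the line $W$ and provides the ``cyclic direction''.

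Next I would classify $D$. Being a subgroup of $P$, the group $D$ contains a metacyclic maximal subgroup. If $D$ is itself metacyclic, then $\Aut(D)$ is not a $2$-group (it admits $\alpha$), so Proposition~\ref{biaut} gives $D\cong C_{2^r}^2$ or $D\cong Q_8$. The minimal nonabelian groups of type $(r,r)$ with $r\ge2$ — the remaining candidates admitting a free order-$3$ action on the Frattini quotient — can be discarded at once: a short computation shows that all three of their maximal subgroups have rank $3$, so they possess no metacyclic maximal subgroup and therefore embed in no bicyclic $2$-group. It then remains to rule out $D\cong C_{2^r}^2$ with $r\ge2$; here, as for the analogous abelian $Q$, one shows that either $Q$ acquires no metacyclic maximal subgroup (already excluded for the relevant configurations by the same rank count on $\Omega_1$) or $P$ is forced to contain a normal elementary abelian subgroup of order $8$.

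Finally I would assemble $Q=\C_Q(\alpha)D$ with $\C_Q(\alpha)\cong C_{2^m}$ and decide how the two factors interlock, using the normalizer constraint of Lemma~\ref{rank3} where convenient. If $D\cong C_2^2$ then $Q$ is abelian and one obtains $Q\cong C_{2^m}\times C_2^2$, case~(i). If $D\cong Q_8$ the only remaining question is whether the involution $\Z(Q_8)$ splits off as a direct factor or is merged with the unique involution of $\C_Q(\alpha)$; this is settled by inspecting $\Z(Q)$ and the squares of the generators, yielding $C_{2^m}\times Q_8$ (case~(ii)) or $C_{2^m}\mathop{\ast}Q_8$ (case~(iii), where $m\ge2$ is needed since for $m=1$ the central product collapses to $Q_8$ of rank $2$). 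The main obstacle is precisely the step resting on the bicyclic classification: excluding the homocyclic block $C_{2^r}^2$ ($r\ge2$) and the other ``too large'' abelian rank-$3$ possibilities, and pinning down the direct-versus-central alternative. This is the point at which one cannot argue with intrinsic properties of $Q$ alone but must split according to whether $P$ has a normal elementary abelian subgroup of order $8$, invoking Proposition~\ref{E8normal} in that case and Janko's explicit presentation otherwise.
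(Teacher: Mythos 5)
Your opening moves are sound and in fact parallel the paper's own proof: the paper also fixes $\alpha\in\Out_{\mathcal{F}}(Q)$ of order $3$ and organizes everything around $C:=\C_Q(\alpha)$, and your observation that $D:=[Q,\alpha]$ satisfies $[D,\alpha]=D$, hence acts without trivial summands on $D/\Phi(D)$, hence has rank exactly $2$, is correct. But from there the proposal has genuine gaps. First, your classification of $D$ is asserted rather than proved: it is not true that the only nonmetacyclic rank-$2$ candidates admitting an order-$3$ automorphism free on the Frattini quotient are the minimal nonabelian groups of type $(r,r)$ --- case~\eqref{general} of Proposition~\ref{rank2} in this very paper exhibits further ones, e.g.\ $\langle x,y\mid x^{2^r}=y^{2^r}=[x,y]^4=[x,x,y]=[y,x,y]=1\rangle$. (This step could be repaired along the lines of Proposition~\ref{rank2ess}: $\alpha$ permutes the three maximal subgroups of $D$ transitively, one of them is metacyclic since $D\le P$, hence all are, and Proposition~2.2 in \cite{Janko} then forces $D$ metacyclic, at which point Proposition~\ref{biaut} applies; but you did not make this argument.) Second, your assembly assumes $\C_Q(\alpha)$ is cyclic, which is false: for $Q\cong C_{2^m}\times Q_8$ one has $\C_Q(\alpha)\cong C_{2^m}\times C_2$. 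The paper's entire case division runs over the $2$-rank of $C$ (via Theorems A, B and D of \cite{Thomas3}), and that bookkeeping cannot be waved away.

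The decisive gap is the elimination of $D\cong C_{2^r}^2$ with $r\ge 2$. Note that here $C$ need \emph{not} centralize $D$ (inversion on $C_{2^r}^2$ commutes with $\alpha$), so $Q$ can be a twisted extension; the paper's analysis shows the surviving configuration is $Q\cong C_{2^{n+1}}^2\rtimes C_2$, and excluding it is the technical heart of the proposition. For $Q\unlhd P$ the paper needs Theorem~4.3(d) and Theorem~4.11 of \cite{Janko}, a computer verification at $|P|=64$, and an arithmetic contradiction inside $\Phi(P)$; for $Q\nunlhd P$ it must first prove $\N_P(Q)/\Phi(Q)\cong D_8\times C_2$ (via a Frattini-intersection argument against Theorem~\ref{Janko}) and then manufacture a section $M/K\cong C_2^4$, contradicting the fact that sections of bicyclic $2$-groups have rank at most $3$. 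Your one-sentence dismissal --- ``either $Q$ acquires no metacyclic maximal subgroup \dots{} or $P$ is forced to contain a normal elementary abelian subgroup of order $8$'' --- names contradictions that are not the ones that actually occur, and no rank count on $\Omega_1$ can help, since all the relevant groups have admissible rank $3$. Until this case is genuinely excluded, the proof is incomplete; everything before it is an acceptable (and somewhat different) skeleton, but the missing case is where most of the work of the paper's proof lies.
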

\begin{proof}
If $P$ contains an elementary abelian normal subgroup of order $8$, then the conclusion holds by Proposition~\ref{E8normal}. Hence, we will assume that there is no such normal subgroup. 
Let $\alpha\in\Out_{\mathcal{F}}(Q)$ of order $3$ (see Lemma~\ref{autqfess}). 
Since $\lvert\Aut(Q)\rvert$ is not divisible by $9$, we can regard $\alpha$ as an element of $\Aut(Q)$ by choosing a suitable preimage. 
We apply \cite{Thomas3} to the group $Q$ (observe that the rank in \cite{Thomas3} is the $p$-rank in our setting). Let $C:=\C_Q(\alpha)$. Suppose first that $C$ has $2$-rank $3$, i.\,e. $m(C)=3$ with the notation of \cite{Thomas3}. 
%In particular, $C$ has rank at least $2$. 
%If $C:=\C_Q(\alpha)$ has only one involution, case \eqref{rank1} applies by Theorem~D in \cite{Thomas3}. Now assume $m(C)=3$ with the notation of \cite{Thomas3}. 
Since $[Q,\alpha]$ is generated by at most three elements, only the first part of Theorem~B in \cite{Thomas3} can occur. In particular $Q\cong Q_8\mathop{\ast} C$. However, this implies that $Q$ contains a subgroup of rank at least $4$. Contradiction. 

Now assume $m(C)=2$. Then Theorem~A in \cite{Thomas3} gives $Q\cong Q_8\mathop{\ast} C$. Let $Z\le\Z(Q_8\times C)=\Phi(Q_8)\times\Z(C)$ such that $Q\cong (Q_8\times C)/Z$. Then $|Z|=2$ and $C$ has rank at most $2$, since $Q$ has rank $3$. Moreover, it follows that $\Omega(\Z(C))\nsubseteq\Phi(C)$ (otherwise: $Z\le\Phi(Q_8)\times\Phi(C)=\Phi(Q_8\times C)$). By Burnside's Basis Theorem, $C\cong C_2\times C_{2^m}$ is abelian and $Q\cong Q_8\times C_{2^m}$ for some $m\ge 1$.

Finally suppose that $m(C)\le1$, i.\,e. $C$ is cyclic or quaternion. By Theorem~\ref{Janko}, $\Phi(P)$ is metacyclic. Since $\Phi(Q)\subseteq \Phi(P)$ (Satz~III.3.14 in \cite{Huppert}), also $\Phi(Q)$ is metacyclic. According to the action of $\alpha$ on $\Phi(Q)$ one of the following holds (see Proposition~\ref{biaut}):
\begin{enumerate}[(a)]
\item\label{a} $\Phi(Q)\le C\unlhd Q$.
\item\label{b} $\Phi(Q)\cong Q_8$.
\item\label{c} $\Phi(Q)\cap C=1$ and $\Phi(Q)\cong C_{2^n}^2$ for some $n\ge 1$.
\end{enumerate}
We handle these cases separately. First assume case~\eqref{a}. By 8.2.2(a) in \cite{Kurzweil} we have $|Q:C|=4$ and $\alpha$ acts freely on $Q/C$. On the other hand $\alpha$ acts trivially on $Q/\C_Q(C)$ by 8.1.2(b) in \cite{Kurzweil}. This shows $Q=C\C_Q(C)$. If $C$ is quaternion, then $Q=Q_{2^n}\mathop{\ast}\C_Q(C)$. In particular, $\C_Q(C)$ has rank at most $2$. Thus, a similar argument as above yields $Q\cong Q_{2^n}\times C_{2^m}$. However, this is impossible here, because $\alpha$ would act trivially on $Q/\Phi(Q)$ by the definition of $C$. Hence, $C$ is cyclic and central of index $4$ in $Q$. Since, $Q$ has rank $3$, the exponents of $C$ and $Q$ coincide.
If $Q$ is abelian, we must have $Q\cong C_{2^m}\times C_2^2$ for some $m\ge 1$. 
Now assume that $Q$ is nonabelian. Write $C=\langle a\rangle$ and choose $b,c\in Q$ such that $Q/C=\langle bC,cC\rangle$. Since $\langle b\rangle C$ is abelian and noncyclic, we may assume $b^2=1$. Similarly $c^2=1$. Since $Q$ is nonabelian, $^cb\ne b$. Let $|C|=2^m$ where $m\ge 2$. Then $a\in\Z(Q)$ implies $^cb=a^{2^{m-1}}b$. Thus, $Q$ is uniquely determined as
\[Q=\langle a,b,c\mid a^{2^m}=b^2=c^2=[a,b]=[a,c]=1,\ ^cb=a^{2^{m-1}}b\rangle.\]
Since the group $Q_8\mathop{\ast} C_{2^m}\cong D_8\mathop{\ast} C_{2^m}$ has the same properties, we get $Q\cong Q_8\mathop{\ast} C_{2^m}$.

Next we will show that case~\eqref{b} cannot occur for any finite group $Q$. On the one hand we have $Q/\C_Q(\Phi(Q))\le\Aut(Q_8)\cong S_4$. On the other hand $C_2^2\cong\Phi(Q)\C_Q(\Phi(Q))/\C_Q(\Phi(Q))\le\Phi(Q/\C_Q(\Phi(Q)))$. Contradiction.

It remains to deal with case~\eqref{c}. Again we will derive a contradiction. By Theorem~D in \cite{Thomas3}, $C\ne 1$ ($U_{64}$ has rank $4$). The action of $\alpha$ on $Q/\Phi(Q)$ shows $|P:C\Phi(Q)|\ge 4$. Now $\Phi(Q)\cap C=1$ implies $|C|=2$.
%Since $\alpha$ acts freely on $\Phi(Q)$, we get $C\cap\Phi(Q)=1$.
%The facts $\Phi(Q)\cap C=1$ and $|Q:C\Phi(Q)|=4$ imply $|C|=2$. 
There exists an $\alpha$-invariant maximal subgroup $N\unlhd Q$. Thus, $N\cap C\subseteq N\cap C\Phi(Q)\cap C=\Phi(Q)\cap C=1$. In particular we can apply Theorem~D in \cite{Thomas3} which gives $N\cong C_{2^{n+1}}^2$. Hence, $Q\cong N\rtimes C=C_{2^{n+1}}^2\rtimes C_2$ (here $\rtimes$ can also mean $\times$). Choose $x,y\in N$ such that $\alpha(x)=y$ and $\alpha(y)=x^{-1}y^{-1}$. Let $C=\langle c\rangle$. Since $Q$ has rank $3$, $c$ acts trivially on $N/\Phi(N)$. Hence, we find integers $i,j$ such that $^zx=x^iy^j$ and $i\equiv 1\pmod{2}$ and $j\equiv 0\pmod{2}$. Then $^cy=\alpha({^zx})=x^{-j}y^{i-j}$. In particular, the isomorphism type of $Q$ does only depend on $i,j$. Since $c^2=1$, we obtain $i^2-j^2\equiv 1\pmod{2^{n+1}}$ and $j(2i-j)\equiv 0\pmod{2^{n+1}}$. We will show that $j\equiv 0\pmod{2^n}$. This is true for $n=1$. Thus, assume $n\ge 2$. Then $1-j^2\equiv i^2-j^2\equiv 1\pmod{8}$. Therefore, $j\equiv 0\pmod{4}$. Now $j(2i-j)\equiv 0\pmod{2^{n+1}}$ implies $j\equiv 0\pmod{2^n}$. In particular $i^2\equiv i^2-j^2\equiv 1\pmod{2^{n+1}}$. Hence, we have two possibilities for $j$ and at most four possibilities for $i$. This gives at most eight isomorphism types for $Q$. 
Now we split the proof into the cases $Q\unlhd P$ and $Q\nunlhd P$.

Suppose $Q\unlhd P$. Then $|P:Q|=2$ by Lemma~\ref{autqfess}. Moreover, $\Omega(Q)\unlhd P$. Since $P$ does not contain an elementary abelian normal subgroup of order $8$, it follows that $Q$ contains more than seven involutions. With the notation above, let $x^ry^sc$ be an involution such that $x^ry^s\notin\Omega(N)$. Then $1=x^ry^scx^ry^sc=x^{r+ir-js}y^{s+jr+(i-j)s}$ and $r(1+i)-js\equiv s(1+i)+jr-js\equiv 0\pmod{2^{n+1}}$. In case $n=1$ we have $|P|=64$. Here it can be shown by computer that $P$ does not exist. Hence, suppose $n\ge 2$ in the following. Suppose further that $i\equiv 1\pmod{2^n}$. Then we obtain $2r\equiv 2s\equiv 0\pmod{2^n}$. Since $x^ry^s\notin\Omega(N)$ we may assume that $r\equiv\pm 2^{n-1}\pmod{2^{n+1}}$ (the case $s\equiv\pm 2^{n-1}\pmod{2^{n+1}}$ is similar). However, this leads to the contradiction $0\equiv r(1+i)-js\equiv 2^n\pmod{2^{n+1}}$. This shows that $i\equiv -1\pmod{2^n}$. In particular, $x^{i-1}y^i={^cx}x^{-1}=[c,x]\in Q'$ and $x^{-j}y^{i-j-1}=[c,y]\in Q'$. This shows $C_{2^n}^2\cong Q'=\Phi(Q)$. 
%Choose $g\in P\setminus Q$. Then $g$ acts nontrivially on $N/\Phi(Q)$, because $\alpha$ does as well. In particular $N\unlhd P$. 
By Lemma~\ref{rank3bi}, $P/\Phi(Q)$ is minimal nonabelian of type $(2,1)$. Since $Q'\subseteq P'$, we conclude that $P/P'\cong C_4\times C_2$.
%Since $N/\Phi(Q)\ne\Z(P/\Phi(Q))=\Phi(P/\Phi(Q))$, it follows that $P/N=\langle gN\rangle$ is cyclic of order $4$. Hence, $P'\subseteq N$. Write $N=\langle x\rangle\times\langle y\rangle$ such that $^gx=y$. Then $xy^{-1}=[x,g]\in P'$. Thus also $N/P'$ is cyclic. By Theorem~4.3(d) in \cite{Janko} we know that $P/P'\cong C_{2^m}\times C_2$ for some $m\ge 2$. Considering $N$ gives $m=2$. 
% and $P/P'=\langle xP',gP'\rangle$. By Theorem~4.3(d) in \cite{Janko} we know that $P/P'\cong C_{2^m}\times C_2$ for some $m\ge 2$. In particular $\Phi(P/P')=\langle x^2P',g^2P'\rangle$ is cyclic. Since $g^2\not\equiv x^2\pmod{N}$, it follows that $x^2\in P'$. This implies $P'=\langle x^2,xy^{-1}\rangle=\langle x^2,xy\rangle$ and $m=2$. 
%In case $n=1$ we have $|P|=64$. Here it can be shown by computer that $P$ does not exist. Thus, assume $n\ge 2$. 
Then $P$ is described in Theorem~4.11 in \cite{Janko}. In particular $\Phi(P)$ is abelian. Choose $g\in P\setminus Q$. Then $g$ acts nontrivially on $N/\Phi(Q)$, because $\alpha$ does as well. This shows $N\unlhd P$ and $C_2^2\cong N/\Phi(Q)\ne\Z(P/\Phi(Q))=\Phi(P/\Phi(Q))$. Hence, $P/N$ is cyclic and $\Phi(P)\ne N$. Therefore, $Q$ contains two abelian maximal subgroups and $N\cap\Phi(P)\subseteq \Z(Q)$. Now a result of Knoche (see Aufgabe~III.7.24) gives the contradiction $|Q'|=2$. 

%In particular $\Phi(P)=\langle x^2,xy,g^2\rangle$ is abelian and metacyclic. Since $g^4\in\Phi(Q)=\langle x^2,y^2\rangle$ and $g(g^4\mho_2(N))g^{-1}=g^4\mho_2(N)$, we get $g^4\in\langle x^2y^2\rangle\mho_2(N)$. This gives the contradiction $\Phi(P)/\langle x^4,x^2y^2\rangle\cong C_2^3$.

Now assume $Q\nunlhd P$. We will derive the contradiction that $\N_P(Q)$ does not contain a metacyclic maximal subgroup. By Lemma~\ref{rank3bi}, $\N_P(Q)/\Phi(Q)\cong D_8\times C_2$. 
%By way of contradiction we may assume that $\N_P(Q)/\Phi(Q)$ is minimal nonabelian of type $(2,1)$ (Lemma~\ref{rank3}). Take $g\in\N_P(\N_P(Q))\setminus\N_P(Q)$ such that $g^2\in\N_P(Q)$. Then $Q_1:={^gQ}\ne Q$ and $Q_1\cap Q$ is $\langle g\rangle$-invariant. Moreover, $\Phi(Q)\subseteq\Phi(\N_P(Q))\subseteq Q_1$ and  
%\[\lvert\Phi(Q):\Phi(Q)\cap\Phi(Q_1)\rvert= \lvert\Phi(Q_1):\Phi(Q)\cap\Phi(Q_1)\rvert= \lvert\Phi(Q_1)\Phi(Q):\Phi(Q)\rvert=\lvert\Phi(Q_1/\Phi(Q))\rvert=2,\]
%since $Q_1/\Phi(Q)$ ($\ne Q/\Phi(Q)$) is abelian of rank $2$.
%Hence, $\N_P(Q)/\Phi(Q)\cap\Phi(Q_1)$ is a group of order $32$ of rank $2$ with two distinct normal subgroups of order $2$ such that their quotients are isomorphic to the minimal nonabelian group of type $(2,1)$. It follows that $\N_P(Q)/\Phi(Q)\cap\Phi(Q_1)$ is the minimal nonabelian group of type $(2,2)$ (this can be checked by computer). However, then all maximal subgroups of $\N_P(Q)/\Phi(Q)\cap\Phi(Q_1)$ have rank $3$ which contradicts Theorem~\ref{Janko}. Thus, we have proved that $\N_P(Q)/\Phi(Q)\cong D_8\times C_2$.
%Now it is not hard to see that $\N_P(Q)/Q\cap Q_1$ is minimal nonabelian of type $(2,2)$. 
%Then $\N_P(Q)/\Phi(Q)\cong D_8\times C_2$ by Lemma~\ref{rank3}. 
Choose $g\in\N_P(Q)\setminus Q$. Then $g$ acts nontrivially on $N/\Phi(N)$, because $\alpha$ does as well. In particular $N\unlhd\N_P(Q)$. This implies \[g^2\Phi(Q)\in\mho(\N_P(Q)/\Phi(Q))=(\N_P(Q)/\Phi(Q))'\subseteq N/\Phi(Q)\]
and $g^2\in N$. As above, we may choose $x,y\in N$ such that $^gx=y$ and $^gy=x$. Since $g$ centralizes $g^2$, we can write $g^2=(xy)^i$ for some $i\in\mathbb{Z}$. Then $gx^{-i}$ has order $2$. Hence, we may assume that $g^2=1$ and $\langle N,g\rangle\cong C_{2^{n+1}}\wr C_2$. In case $n=1$ we have $\lvert\N_P(Q)\rvert=64$. Here one can show by computer that $\N_P(Q)$ does not exist. Hence, $n\ge 2$. Let $M$ be a metacyclic maximal subgroup of $\N_P(Q)$. Since $\langle \Phi(Q),g\rangle\cong C_{2^n}\wr C_2$ is not metacyclic, we conclude that $g\notin M$. Let $C=\langle c\rangle$. Then $\langle \Phi(Q),c\rangle$ has rank $3$. In particular, $c\notin M$. This leaves two possibilities for $M$. It is easy to see that $\langle N,gc\rangle\cong C_{2^{n+1}}\wr C_2$. Thus, $M=\langle \Phi(Q),xc,gc\rangle$. 
Assume $(gc)^2\in\Phi(Q)$. Then it is easy to see that $\langle\Phi(Q),gc\rangle\cong C_{2^n}\wr C_2$ is not metacyclic. This contradiction shows $(gc)^2\equiv xy\pmod{\Phi(Q)}$. 
%Since $xc$ and $gc$ do not commute modulo $\Phi(Q)$, we get $M/\Phi(Q)$. In particular $xg\Phi(Q)$ and $yg\Phi(Q)$ are the only elements of order $4$ in $M/\Phi(Q)$.
%Let $(gc)^2=x^ry^s$. Since $x^sy^r=g(gc)^2g=(cg)^2=(gc)^{-2}=x^{-r}y^{-s}$, we have $(gc)^2=(xy^{-1})^r$ where $r$ is odd. 
Moreover, $c(gc)^2c=(cg)^2=(gc)^{-2}$. 
%By the calculation above, this implies that $zczc\in\Omega(Q)=\langle x^{2^n},y^{2^n}\rangle$ where $z:=(xy^{-1})^r$. 
Since $N=\langle gc,\alpha(gc)\rangle$, $c$ acts as inversion on $N$. In particular, $(xc)^2=1$. Hence $\langle\Omega(Q),xc\rangle\subseteq M$ is elementary abelian of order $8$. Contradiction.
%We write $N=\langle x\rangle\times\langle y\rangle$ such that $^gx=y$. 
%Let $C=\langle c\rangle$. Then $\langle \Phi(Q),c\rangle$ has rank $3$. On the other hand, $\N_P(Q)$ contains a metacyclic maximal subgroup $M$. Thus, $c\notin M$. Also $M\ne \langle N,g\rangle$. It is easy to see that $\langle N,gc\rangle\cong C_{2^{n+1}}\wr C_2$. Hence, $M\ne \langle N,gc\rangle$. Consider next $M=\langle xy,xc,g\rangle\Phi(Q)$
%Since $xy\Phi(Q)=x^{-1}y\Phi(Q)=[g,x]\Phi(Q)=(P/\Phi(Q))'$, $M:=\langle x^2,xy,c,g\rangle$ is a maximal subgroup of $\N_P(Q)$.
%%Write $cxc=xx^{2i}y^{2j}$. Then $cyc=c\alpha(x)c=\alpha(cxc)=yx^{-2j}y^{2(i-j)}$ and \[x=c^2xc^2=x^{(2i+1)^2}y^{2j(2i+1)}x^{-4j^2}y^{(1+2(i-j))2j}=xx^{4(i+i^2-j^2)}y^{8ij+4j-4j^2}.\]
%It can be seen easily that $K:=\langle x^4,x^2y^2\rangle\unlhd M$. We prove that $c$ and $g$ commute modulo $K$. By the structure of $\N_P(Q)/\Phi(Q)$ we get $cgc^{-1}g^{-1}=(cg)^2=x^{2i}y^{2j}\in\Phi(Q)$ for some $i,j\in\mathbb{Z}$. Moreover, \[x^{2j}y^{2i}=g(cg)^2g=gcgc=(cg)^{-2}=x^{-2i}y^{-2j}\]
%and $(cg)^2=(xy^{-1})^{2i}\in K$. It follows that $M/K\cong C_2^4$. Contradiction.
\end{proof}

Let $Q$ be one of the groups in Proposition~\ref{biess}. Then it can be seen that there is an automorphism $\alpha\in\Aut(Q)$ of order $3$. Since the kernel of the canonical map $\Aut(Q)\to\Aut(Q/\Phi(Q))\cong\GL(3,2)$ is a $2$-group, we have $\langle\alpha\rangle\in\Syl_3(\Aut(Q))$. If $\alpha$ is not conjugate to $\alpha^{-1}$ in $\Aut(Q)$, then Burnside's Transfer Theorem implies that $\Aut(Q)$ is $3$-nilpotent. But then also $\Out_{\mathcal{F}}(Q)\cong S_3$ would be $3$-nilpotent which is not the case. Hence, $\alpha$ is unique up to conjugation in $\Aut(Q)$. In particular the isomorphism type of $\C_Q(\alpha)$ is uniquely determined.

\begin{Proposition}\label{essnormal}
Let $\mathcal{F}$ be a fusion system on a bicyclic $2$-group $P$. If $Q\unlhd P$ is $\mathcal{F}$-essential of rank $3$, then one of the following holds:
\begin{enumerate}[(i)]
\item $P$ is minimal nonabelian of type $(n,1)$ for some $n\ge 2$. 
\item $P\cong Q_8\rtimes C_{2^n}$ for some $n\ge 2$. Here $C_{2^n}$ acts as a transposition in $\Aut(Q_8)\cong S_4$.
%\item\label{wr} $P\cong C_4\wr C_2$. 
\item $P\cong Q_8.C_{2^n}$ for some $n\ge 2$.
\end{enumerate}
In particular $P'$ is cyclic.
\end{Proposition}
\begin{proof}
We use Proposition~\ref{biess}. If $Q$ is abelian, then $C_2^3\cong\Omega(Q)\unlhd P$. By Proposition~\ref{E8normal}, $P$ is minimal nonabelian of type $(n,1)$ for some $n\ge 2$.
Now assume $Q\cong Q_8\times C_{2^{n-1}}$ for some $n\ge 2$. We write $Q=\langle x,y,z\rangle$ such that $\langle x,y\rangle\cong Q_8$ and $\langle z\rangle\cong C_{2^{n-1}}$. Moreover, choose $g\in P\setminus Q$. Let $\alpha\in\Out_{\mathcal{F}}(Q)$ as usual. Then $\alpha$ acts nontrivially on $Q/\Z(Q)\cong C_2^2$ and so does $g$. Hence, we may assume $^gx=y$. Since $g^2\in Q$, it follows that $^gy={^{g^2}x}\in\{x,x^{-1}\}$. By replacing $g$ with $gx$ if necessary, we may assume that $^gy=x$. Hence, $g^2\in\Z(Q)$. By Lemma~\ref{rank3bi}, $P/\Phi(Q)$ is minimal nonabelian of type $(2,1)$. In particular, $Q/\Phi(Q)=\Omega(P/\Phi(Q))$. This gives $g^2\notin\Phi(Q)$ and
%and $^gz=z$. After replacing $g$ by $gx$ if necessary, we obtain $^gy=x$. In particular $g^2$ centralizes $\langle x,y\rangle$. 
%Since $P/\Phi(Q)$ is minimal nonabelian of type $(2,1)$, we get 
$g^2\in z\langle x^2,z^2\rangle$. Since $^g(x^2)=x^2$, we get $^gz=z$.
After replacing $g$ by $gz^i$ for a suitable integer $i$, it turns out that $g^2\in\{z,zx^2\}$. In the latter case we replace $z$ by $x^2z$ and obtain $g^2=z$. Hence, $P=Q_8\rtimes C_{2^n}$ as stated. Moreover, $g$ acts on $\langle x,y\rangle$ as an involution in $\Aut(Q_8)\cong S_4$. Since an involution which is a square in $\Aut(Q_8)$ cannot act nontrivially on $Q_8/\Phi(Q_8)$, $g$ must correspond to a transposition in $S_4$. This describes $P$ up to isomorphism. Since $P=\langle gx\rangle\langle g\rangle$, $P$ is bicyclic. In particular $P'\subseteq\langle x,y\rangle$ is abelian and thus cyclic.

Finally suppose that $Q=Q_8\mathop{\ast}C_{2^n}$ for some $n\ge 2$. We use the same notation as before. In particular $x^2=z^{2^{n-1}}$. The same arguments as above give $g^2=z$ and
\[P=\langle x,y,g\mid x^4=1,\ x^2=y^2=g^{2^n},\ ^yx=x^{-1},\ ^gx=y,\ ^gy=x\rangle\cong Q_8.C_{2^n}.\]
%For $n=2$ it can be seen that $P\cong C_4\wr C_2$. 
Then $P=\langle gx\rangle\langle g\rangle$ is bicyclic and $P'$ cyclic.
\end{proof}

We will construct the groups and fusion systems in the last proposition systematically in our main Theorem~\ref{main}.

Let $\mathcal{F}$ be a fusion system on a $2$-group $P$. Following Definition~3.1 in \cite{Linckelmann} every subgroup $Q\le\Z(P)$ gives rise to another fusion system $\C_{\mathcal{F}}(Q)$ on $P$.

\begin{Definition}
The largest subgroup $Q\le\Z(P)$ such that $\C_{\mathcal{F}}(Q)=\mathcal{F}$ is called the \emph{center} $\Z(\mathcal{F})$ of $\mathcal{F}$. Accordingly, we say $\mathcal{F}$ is \emph{centerfree} if $\Z(\mathcal{F})=1$.
\end{Definition}

%We say that $\mathcal{F}$ is \emph{centerfree} if there is no $1\ne Q\le\Z(P)$ such that $\mathcal{F}=\C_{\mathcal{F}}(Q)$.
The following result is useful to reduce the search for essential subgroups. Notice that the centerfree fusion systems on metacyclic $2$-group are determined in \cite{CravenGlesser}.

\begin{Proposition}\label{centerfree}
Let $\mathcal{F}$ be a centerfree fusion system on a bicyclic, nonmetacyclic $2$-group $P$. Then there exists an abelian $\mathcal{F}$-essential subgroup $Q\le P$ isomorphic to $C_{2^m}^2$ or to $C_{2^m}\times C_2^2$ for some $m\ge 1$. %Moreover, if $P\not\cong C_{2^m}\wr C_2$, then $\Z(P)\cong C_2$.
\end{Proposition}
\begin{proof}
By way of contradiction assume that all $\mathcal{F}$-essential subgroups are isomorphic to $C_{2^m}\times Q_8$ or to $C_{2^m}\mathop{\ast}Q_8$ (use Propositions~\ref{rank2ess} and \ref{biess}). Let $z\in\Z(P)$ be an involution. Since $\Z(\mathcal{F})=1$, Alperin's Fusion Theorem in connection with Theorem~\ref{Janko} implies that there exists an $\mathcal{F}$-essential subgroup $Q\le P$ such that $z\in\Z(Q)$. Moreover, there is an automorphism $\alpha\in\Aut(Q)$ such that $\alpha(z)\ne z$. Of course $\alpha$ restricts to an automorphism of $\Z(Q)$. 
In case $Q\cong C_{2^m}\mathop{\ast}Q_8$ this is not possible, since $\Z(Q)$ is cyclic. Now assume $Q\cong C_{2^m}\times Q_8$. Observe that we can assume that $\alpha$ has order $3$, because the automorphisms in $\Aut_P(Q)$ fix $z$ anyway.
But then $\alpha$ acts trivially on $Q'$ and on $\Omega(Q)/Q'$ and thus also on $\Omega(Q)\ni z$. Contradiction.
\end{proof}

\subsection{The case $P'$ noncyclic}
The aim of this section is to prove that there are only nilpotent fusion system provided $P'$ is noncyclic.
We do this by a case by case analysis corresponding to the theorems in \cite{Janko}. By Proposition~\ref{essnormal} we may assume that there are no normal $\mathcal{F}$-essential subgroups. %In particular we will use Proposition~\ref{essnotnormal}.

Let $\mathcal{F}$ be a nonnilpotent fusion system on the bicyclic $2$-group $P$.
Assume for the moment that $P'\cong C_2^2$. Then $P$ does not contain an elementary abelian subgroup of order $8$ by Proposition~\ref{E8normal}. Hence, Theorem~4.6 in \cite{Janko} shows that $P$ is unique of order $32$. In this case we can prove with computer that there are no candidates for $\mathcal{F}$-essential subgroups. Hence, we may assume $\Phi(P')\ne 1$ in the following. 

We introduce a few notation from Theorem~4.3 in \cite{Janko} that will be used for the rest of the paper:
\begin{align*}
\Phi(P)=P'\langle a^2\rangle=\langle a^2\rangle\langle v\rangle,&& M=E\langle a^2\rangle=\langle x\rangle\langle a^2\rangle\langle v\rangle.
\end{align*}
Here, $M$ is the unique nonmetacyclic maximal subgroup of $P$.

\begin{Proposition}
Let $P$ be a bicyclic $2$-group such that $P'$ is noncyclic and $P/\Phi(P')$ contains no elementary abelian normal subgroup of order $8$. Then every fusion system on $P$ is nilpotent.
\end{Proposition}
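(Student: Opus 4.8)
The strategy is to assume, for contradiction, that $\mathcal{F}$ is a \emph{nonnilpotent} fusion system on $P$. Then by Alperin's Fusion Theorem $\mathcal{F}$ must possess at least one $\mathcal{F}$-essential subgroup $Q$. Since $P$ is bicyclic and nonmetacyclic (note $P'$ noncyclic forces $P$ nonabelian, and a metacyclic group has cyclic sections only in the obvious way), every section of $P$ has rank at most $3$, so $Q$ has rank $2$ or $3$. The rank $2$ case is settled immediately: by Proposition~\ref{rank2ess} it would force $P\cong C_{2^m}\wr C_2$, but that group has $P'\cong C_{2^m}$ cyclic, contradicting our hypothesis. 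Hence $Q$ has rank $3$. By Proposition~\ref{essnormal} any \emph{normal} essential subgroup of rank $3$ forces $P'$ to be cyclic; so we may assume $Q\nunlhd P$ and invoke Proposition~\ref{essnotnormal} to pin down $\N_P(Q)$ as one of $D_8\times C_{2^m}$, $Q_{16}\times C_{2^m}$, $SD_{16}\times C_{2^m}$, or $Q_{16}\mathop{\ast}C_{2^m}$.

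First I would reduce to the hypothesis that $\Phi(P')\ne 1$, exactly as flagged in the paragraph before the statement: the cases $P'\cong C_2^2$ (handled by the computer verification via Theorem~4.6 in \cite{Janko}) have already been dispatched, so we genuinely have $\Phi(P')\ne 1$. Next I would locate $P$ among the Janko presentations: since $P/\Phi(P')$ contains no elementary abelian normal subgroup of order $8$, the relevant case is governed by Theorem~4.3 in \cite{Janko}, which gives explicit generators and relations for $P$. The main work is then to confront the explicit structure of $P$ coming from Janko's theorem with the rigid structure forced on $\N_P(Q)$ by Proposition~\ref{essnotnormal}.

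The core of the argument is a commutator/Frattini computation. From the candidate shapes of $\N_P(Q)$ one reads off $\N_P(Q)'$ and $\Phi(\N_P(Q))$; in each case $\N_P(Q)/Q$ has order $2$, so $\N_P(Q)$ sits as an index-at-most-$4$ subgroup of $P$ controlling a large portion of the commutator structure. The plan is to show that the constraints ``$\Phi(P')\ne 1$'' together with ``$P'$ noncyclic'' are incompatible with $\N_P(Q)$ being $2$-generated and of the listed isomorphism type. Concretely, I expect to argue that the presence of a $Q_8$ or $D_8$ factor inside $\N_P(Q)$, combined with the direct or central $C_{2^m}$ factor, forces either $P'$ to collapse to a cyclic group, or forces an elementary abelian subgroup of order $8$ to appear in $P/\Phi(P')$ (excluded by hypothesis), or produces a section of rank $4$ (excluded since $P$ is bicyclic, so all sections have rank at most $3$). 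This rank-$4$ obstruction is the recurring tool already exploited at the end of Proposition~\ref{biess}.

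The hardest part will be the bookkeeping that rules out each of the four $\N_P(Q)$-shapes against Janko's explicit presentation of $P$: one must track $\mho$-powers, commutators $[x,g]$, and the images in $P/\Phi(P')$ to derive a contradiction in every branch, and the branches $SD_{16}\times C_{2^m}$ and $Q_{16}\mathop{\ast}C_{2^m}$ are likely the most delicate because their centers and Frattini subgroups interact subtly with the central extension describing $P$. I would organize this as a short lemma-free case analysis, in each case either producing a rank-$4$ section, an elementary abelian normal $C_2^3$, or a cyclic $P'$, each of which contradicts a standing hypothesis, thereby forcing $\mathcal{F}$ to have no essential subgroups and hence to be nilpotent.
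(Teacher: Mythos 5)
Your skeleton reproduces the paper's framing: reduce to $\Phi(P')\ne 1$ (the case $P'\cong C_2^2$ being the computer-checked one), place $P$ in Theorems~4.3 and 4.7 of \cite{Janko}, dispatch rank-$2$ essential subgroups via Proposition~\ref{rank2ess} (the wreath product has cyclic derived subgroup) and normal rank-$3$ ones via Proposition~\ref{essnormal}, then confront Proposition~\ref{essnotnormal} with Janko's presentation. But the entire mathematical content of the proposition lies in that confrontation, and there you only promise ``bookkeeping'' and name three candidate contradictions --- a rank-$4$ section, a normal $C_2^3$ in $P/\Phi(P')$, or a collapse of $P'$ to a cyclic group --- none of which is the contradiction that actually materializes, and none of which you indicate how to reach. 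The paper's proof rests on one explicit squaring computation in Janko's relations: writing $M=E\langle a^2\rangle$ for the unique nonmetacyclic maximal subgroup, every element $\beta=xa^{2i}v^j\in M\setminus\Phi(P)$ satisfies $\beta^2\in u\langle z\rangle$, in particular $\beta^2\ne 1$. This single fact kills all three types of $Q$ from Proposition~\ref{biess}: a $Q\cong C_{2^m}\times C_2^2$ has seven involutions while the abelian metacyclic group $\Phi(P)$ has at most three, so some involution of $Q$ would lie in $M\setminus\Phi(P)$; for $Q\cong C_{2^m}\times Q_8$, Proposition~\ref{essnotnormal} and the abelianity of $\Phi(P)$ force $\N_P(Q)\cap\Phi(P)\cong C_8\times C_{2^m}$, whence some $\beta\in\N_P(Q)\setminus\Phi(P)$ has $\beta^2=\delta^4\in\mho_2(\Phi(P))\cap\Omega(\Phi(P))=\langle z\rangle$, contradicting $\beta^2\in u\langle z\rangle$; and for $Q\cong Q_8\mathop{\ast}C_{2^m}$, cyclicity of $\Z(P)$ restricts to case (b) of Janko's theorem and $\N_P(Q)\cong Q_{16}\mathop{\ast}C_{2^m}\cong D_{16}\mathop{\ast}C_{2^m}$ again contains an involution outside $\Phi(P)$. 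Without this (or an equivalent) computation the proof does not close; in particular the rank-$4$ obstruction, which does real work in Proposition~\ref{biess}, has no purchase here, since all groups in sight already have rank at most~$3$, and nothing in your outline produces a normal $E_8$ in $P/\Phi(P')$ or forces $P'$ cyclic.

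There is also a secondary error: from $\lvert\N_P(Q):Q\rvert=2$ you infer that $\N_P(Q)$ ``sits as an index-at-most-$4$ subgroup of $P$.'' That is a non sequitur --- nothing bounds $\lvert P:\N_P(Q)\rvert$; indeed in the nonnilpotent fusion systems of Theorem~\ref{main} the normalizer of the essential subgroup $C_{2^{m-1}}\times C_2^2$ has index $2^{n-1}$ in $P$. Your claim that $\N_P(Q)$ therefore ``controls a large portion of the commutator structure'' of $P$ rests on this false premise, so even the heuristic guiding your deferred case analysis is unsound. The correct lever is not the size of $\N_P(Q)$ inside $P$ but the fact that $\N_P(Q)\Phi(P)=M$ places $\N_P(Q)$, in index $2$ over its abelian intersection with $\Phi(P)$, against the involution-free structure of $M\setminus\Phi(P)$.
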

\begin{proof}
The case $\Phi(P')=1$ was already handled. So we may assume $\Phi(P')\ne 1$. In particular Theorem~4.7 in \cite{Janko} applies.
%As in the proof of this theorem we can also use Theorem~4.3 in \cite{Janko}. With the notation in this paper we have $\Phi(P)=P'\langle a^2\rangle=\langle a^2\rangle\langle v\rangle$, and $M=E\langle a^2\rangle=\langle x\rangle\langle a^2\rangle\langle v\rangle$ is the unique nonmetacyclic maximal subgroup of $P$. 
Let $\mathcal{F}$ be a nonnilpotent fusion system on $P$. Assume first that there exists an $\mathcal{F}$-essential subgroup $Q\in\{C_{2^m}\times C_2^2,C_{2^m}\mathop{\ast}Q_8\cong C_{2^m}\mathop{\ast}D_8\}$ (the letter $m$ is not used in Theorem~4.7 of \cite{Janko}).
Theorem~4.7 of \cite{Janko} also shows that $\Phi(P)$ is metacyclic and abelian. Since $Q$ contains more than three involutions, there is an involution $\beta\in M\setminus\Phi(P)$. Hence, we can write $\beta=xa^{2i}v^j$ for some $i,j\in\mathbb{Z}$.
Now in case (a) of Theorem~4.7 of \cite{Janko} we derive the following contradiction:
\[\beta^2=xa^{2i}v^jxa^{2i}v^j=xa^{2i}xa^{2i}=x^2(av)^{2i}a^{2i}=x^2a^{2i}u^iz^{\xi i}a^{2i}=uz^{(\eta+\xi)i}\ne 1.\]
Similarly in case (b) we get:
\begin{align*}
\beta^2&=xa^{2i}v^jxa^{2i}v^j=xa^{2i}xz^ja^{2i}=x^2(av)^{2i}z^ja^{2i}=x^2a^{2i}u^iv^{2^{n-2}i}z^{\xi i}z^ja^{2i}\\
&=x^2v^{2^{n-2}i}z^{\eta i}v^{2^{n-2}i}z^{\xi i}z^j=uz^{i(1+\eta+\xi)+j}\ne 1.
\end{align*}

Next assume that there is an $\mathcal{F}$-essential subgroup $C_{2^m}\times Q_8\cong Q\le P$ for some $m\ge 1$. 
Suppose $m\ge 3$ for the moment. Since $Q\subseteq M$, it is easy to see that $M\setminus\Phi(P)$ contains an element of order at least $8$. However, we have seen above that this is impossible. Hence, $m\le 2$. 
By Proposition~\ref{essnormal}, $Q$ is not normal in $P$.
Since $Q<\N_M(Q)\le\N_P(Q)$, we have $\N_P(Q)\le M=\N_P(Q)\Phi(P)$. A computer calculation shows that $\N_P(Q)\cong Q_{16}\times C_{2^m}$. Thus, $\N_P(Q)\cap\Phi(P)\cong C_8\times C_{2^m}$, because $\Phi(P)$ is abelian.
Hence, there exist $\beta=xa^{2^i}y^j\in\N_P(Q)\setminus\Phi(P)\subseteq M\setminus\Phi(P)$ and $\delta\in\N_P(Q)\cap\Phi(P)$ such that $\beta^2=\delta^4$. As above we always have $\beta^2\in u\langle z\rangle$. However, in both cases (a) and (b) we have $\delta^4\in\mho_2(\Phi(P))\cap\Omega(\Phi(P))=\langle a^8\rangle\langle v^{2^{n-1}}\rangle=\langle z\rangle$. Contradiction.
%Finally suppose that $Q\cong Q_8\mathop{\ast}C_{2^m}$ is $\mathcal{F}$-essential for some $m\ge 2$.
%Then $\Z(P)\subseteq\C_P(Q)=\Z(Q)$ is cyclic and only case (b) can occur. By Proposition~\ref{essnotnormal}, $\N_P(Q)\cong Q_{16}\mathop{\ast}C_{2^m}\cong D_{16}\mathop{\ast}C_{2^m}$. In particular there exists an involution $\beta\in\N_P(Q)\setminus\Phi(P)\subseteq M\setminus\Phi(P)$. But this was already excluded. Hence, the claim follows.
\end{proof}

If $P'$ is cyclic, $P/\Phi(P')$ is minimal nonabelian and thus contains an elementary abelian normal subgroup of order $8$. 
Hence, it remains to deal with the case where $P/\Phi(P')$ has a normal subgroup isomorphic to $C_2^3$.

Our next goal is to show that $P'$ requires a cyclic maximal subgroup group in order to admit a nonnilpotent fusion system.

\begin{Proposition}\label{nonhomo}
Let $P$ be a bicyclic $2$-group such that $P'\cong C_{2^r}\times C_{2^{r+s}}$ for some $r\ge 2$ and $s\in\{1,2\}$. Then every fusion system on $P$ is nilpotent.
\end{Proposition}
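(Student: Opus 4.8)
The goal is to prove that $P$ has no $\mathcal{F}$-essential subgroup whatsoever; once this is done, nilpotency is automatic. Indeed, $P$ is nonabelian with noncyclic commutator subgroup, hence neither homocyclic nor quaternion of order $8$, so $\Aut(P)$ is a $2$-group by Proposition~\ref{biaut}, and since $\Aut_{\mathcal{F}}(P)\le\Aut(P)$ has $\Inn(P)$ as a Sylow $2$-subgroup this forces $\Aut_{\mathcal{F}}(P)=\Inn(P)$; in the absence of essential subgroups Alperin's Fusion Theorem then gives $\mathcal{F}=\mathcal{F}_P(P)$. To rule out essential subgroups I first cut down their possible rank. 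An $\mathcal{F}$-essential subgroup of rank $2$ would force $P\cong C_{2^m}\wr C_2$ by Proposition~\ref{rank2ess}, and a \emph{normal} $\mathcal{F}$-essential subgroup of rank $3$ would put $P$ into one of the families of Proposition~\ref{essnormal}; in every one of these cases $P'$ is cyclic, contrary to hypothesis. Hence I only have to exclude non-normal $\mathcal{F}$-essential subgroups $Q$ of rank $3$, which by Proposition~\ref{biess} are isomorphic to $C_{2^m}\times C_2^2$, $C_{2^m}\times Q_8$ or $C_{2^m}\mathop{\ast}Q_8$, and whose normalizers are listed in Proposition~\ref{essnotnormal}.

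Next I would pin down the location of such a $Q$. As a rank-$3$ subgroup, $\N_P(Q)$ is nonmetacyclic, and since $Q\nunlhd P$ gives $\N_P(Q)<P$, it must lie inside the unique nonmetacyclic maximal subgroup $M$ guaranteed by Theorem~\ref{Janko}. With $\N_P(Q)\le M$ in hand I would switch to Janko's explicit presentation for bicyclic $2$-groups with $P'\cong C_{2^r}\times C_{2^{r+s}}$, writing $M=\langle x\rangle\langle a^2\rangle\langle v\rangle$ and $\Phi(P)=P'\langle a^2\rangle$ exactly as in the proof of the previous proposition; this presentation also certifies that $\Phi(P)$ is abelian and metacyclic. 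The computational heart of the argument is then a single squaring identity: evaluating $\beta^2$ with the Janko relations for an arbitrary $\beta\in M\setminus\Phi(P)$ shows $\beta^2$ lies in a fixed nontrivial coset $u\langle z\rangle$, so that $M\setminus\Phi(P)$ contains no involutions at all.

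Granting this identity, the three types of $Q$ close out as follows. If $Q\cong C_{2^m}\times C_2^2$, then $\Omega(Q)\cong C_2^3$ supplies seven involutions while the abelian metacyclic group $\Phi(P)$ has only three in $\Omega(\Phi(P))$; thus $Q$ yields an involution in $M\setminus\Phi(P)$, a contradiction. If $Q\cong Q_8\mathop{\ast}C_{2^m}$, then $\Z(P)\le\Z(Q)$ is cyclic and Proposition~\ref{essnotnormal} forces $\N_P(Q)\cong Q_{16}\mathop{\ast}C_{2^m}\cong D_{16}\mathop{\ast}C_{2^m}$, which again contains an involution outside $\Phi(P)$ — impossible. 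The case $Q\cong C_{2^m}\times Q_8$ is the one I expect to resist longest, because here $M\setminus\Phi(P)$ need contain no involution of $Q$; instead I would use $\N_P(Q)\cong Q_{16}\times C_{2^m}$ or $SD_{16}\times C_{2^m}$ from Proposition~\ref{essnotnormal} together with the abelianity of $\Phi(P)$ to produce $\beta\in M\setminus\Phi(P)$ and $\delta\in\N_P(Q)\cap\Phi(P)$ with $\beta^2=\delta^4$, and then to observe that $\beta^2\in u\langle z\rangle$ while $\delta^4\in\mho_2(\Phi(P))\cap\Omega(\Phi(P))=\langle z\rangle$, a contradiction since $u\notin\langle z\rangle$. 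The delicate points throughout are to confirm that this commutator structure really falls under the intended structure theorem of Janko \cite{Janko} and to keep his differing commutator conventions straight while carrying out the squaring computations.
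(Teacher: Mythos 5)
Your opening reductions are sound and agree with what the paper does implicitly: rank-$2$ essentials are killed by Proposition~\ref{rank2ess} (a wreath product $C_{2^m}\wr C_2$ has cyclic derived subgroup), normal rank-$3$ essentials by Proposition~\ref{essnormal}, and any surviving essential $Q$ satisfies $\N_P(Q)\le M$. The gap is exactly at what you call the computational heart. The identity ``$\beta^2\in u\langle z\rangle$ for every $\beta\in M\setminus\Phi(P)$'' is a consequence of the relations in Janko's Theorem~4.7, and that theorem covers precisely the bicyclic $2$-groups for which $P/\Phi(P')$ has \emph{no} normal elementary abelian subgroup of order $8$ --- i.e.\ exactly the case settled by the proposition immediately preceding this one, whose proof you have transplanted. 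The groups that remain for Proposition~\ref{nonhomo} are those where $P/\Phi(P')$ \emph{does} have such a normal subgroup; they are the groups of Janko's Theorems~4.11 and 4.12, whose presentations and relations are different, and for them your identity is false: these groups contain involutions outside $\Phi(P)$. This is visible in the paper itself: for $Q\cong C_2\times C_2^2$ the paper must work with an involution $\beta=xa^{2i}v^j\in Q\setminus\Phi(P)$ and derives its contradiction from a normalizer computation, not from the nonexistence of $\beta$ (which would have been a one-line argument, as in the preceding proposition). Moreover $\Z(P)$ is \emph{noncyclic} in the Theorem~4.11/4.12 groups, so your treatment of $Q\cong Q_8\mathop{\ast}C_{2^m}$ --- which allows $\Z(P)$ to be cyclic and then appeals once more to the false identity --- also collapses; the correct disposal of that case is simply that $\Z(P)\le\C_P(Q)=\Z(Q)$ cannot be noncyclic. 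In short, all three of your case closures rest on a statement that fails for the family of groups actually at issue.

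For comparison, the paper's mechanism is different in kind. Since $\Phi(P)$ is abelian and metacyclic, all of its elements of order $4$ lie in a single homocyclic subgroup $\langle b^{2^{r-2}},v^{2^{r-1}}\rangle\cong C_4^2$, and the Theorem~4.11/4.12 relations show that $x$, hence every $\beta\in M\setminus\Phi(P)$, acts on this subgroup as inversion. For $Q\cong C_{2^m}\times C_2^2$ this forces $m=1$ (an element of order $4$ in $Q\cap\Phi(P)$ would be simultaneously centralized and inverted by $\beta$); for $m=1$ one then picks $\rho$ of order $4$ in the $C_4^2$ with $\rho\notin\N_P(Q)\cap\Phi(P)$ (possible by counting) and checks ${}^{\rho}\beta=\beta\rho^{-2}\in Q$, so that $\rho$ normalizes $Q=\langle\beta\rangle(Q\cap\Phi(P))$ --- contradicting the choice of $\rho$. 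The case $Q\cong C_{2^m}\times Q_8$ repeats this argument, and $Q\cong C_{2^m}\mathop{\ast}Q_8$ is impossible because $\Z(P)$ is noncyclic. If you want to salvage your outline, you would have to verify your squaring identity against the actual relations of Theorems~4.11 and 4.12 in \cite{Janko}, and that verification cannot succeed; some argument exploiting the structure of $\Phi(P)$, such as the inversion-plus-normalizer argument above, is genuinely needed.
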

\begin{proof}
We apply Theorem~4.11 and Theorem~4.12 in \cite{Janko} simultaneously. As usual assume first that $P$ contains an $\mathcal{F}$-essential subgroup $Q\cong C_{2^m}\times C_2^2$ for some $m\ge 1$ ($m$ is not used in the statement of Theorem~4.11 in \cite{Janko}). Then $Q\cap\Phi(Q)\cong C_{2^m}\times C_2$, since $\Phi(P)$ is abelian and metacyclic. We choose $\beta:=xa^{2i}v^j\in Q\setminus\Phi(P)$. In case $m\ge 2$, $\beta$ fixes an element of order $4$ in $Q\cap\Phi(P)$. Since $\Phi(P)$ is abelian, all elements of $\Phi(P)$ of order $4$ are contained in 
\[\Omega_2(\Phi(P))=\begin{cases}
\langle b^{2^{r-2}},v^{2^{r-1}}\rangle&\text{if Theorem~4.11 applies},\\
\langle b^{2^{r-1}},v^{2^{r-1}}\rangle&\text{if Theorem~4.12 applies}.
\end{cases}\]
However, the relations in Theorem~4.11/12 in \cite{Janko} show that $x$ and thus $\beta$ acts as inversion on $\Omega_2(\Phi(P))$. Hence, $m=1$. 
Then $\N_P(Q)\cap\Phi(Q)\cong C_4\times C_2$ by Lemma~\ref{rank3bi}. In particular there exists an element $\rho\in\Omega_2(\Phi(P))\setminus(\N_P(Q)\cap\Phi(Q))$. Then $^\rho\beta=\beta\rho^{-2}\in Q$. Since $Q=\langle\beta\rangle (Q\cap\Phi(P))$, we derive the contradiction $\rho\in\N_P(Q)$.

Next suppose that $Q\cong C_{2^m}\times Q_8$ for some $m\ge 1$. Here we can repeat the argument word by word. Finally the case $Q\cong C_{2^m}\mathop{\ast}Q_8$ cannot occur, since $\Z(P)$ is noncyclic.
\end{proof}

The next lemma is useful in a more general context.

\begin{Lemma}\label{metanormal}
Let $P$ be a metacyclic $2$-group which does not have maximal class. Then every homocyclic subgroup of $P$ is given by $\Omega_i(P)$ for some $i\ge 0$.
\end{Lemma}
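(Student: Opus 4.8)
The plan is to force a homocyclic subgroup $H\cong C_{2^n}^2$ to coincide with $\Omega_n(P)$ by a pure counting argument. The case $n=0$ is trivial since then $H=1=\Omega_0(P)$, so assume $n\ge1$. As $H$ has exponent $2^n$, every element of $H$ lies among the generators of $\Omega_n(P)$, whence $H\subseteq\Omega_n(P)$ and in particular $|\Omega_n(P)|\ge|H|=2^{2n}$. Consequently it suffices to prove the order bound $|\Omega_n(P)|\le 2^{2n}$: together with the inclusion this yields $|\Omega_n(P)|=|H|$ and hence $H=\Omega_n(P)$. It is worth stressing that no claim that $\Omega_n(P)$ itself be homocyclic is needed (indeed $\Omega_n(P)$ may fail to be abelian); only the order estimate combined with $H\subseteq\Omega_n(P)$ is used.

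The entire content therefore lies in the bound $|\Omega_n(P)|\le 2^{2n}$, and this is where the hypothesis that $P$ is not of maximal class enters decisively. I would first record the structural fact that, for a metacyclic $2$-group $P$ not of maximal class, the set $\{x\in P:x^{2^n}=1\}$ is already a subgroup, so that $\Omega_n(P)=\{x\in P:x^{2^n}=1\}$ (``power-regularity''); this property fails exactly for $D_{2^m}$, $Q_{2^m}$ and $SD_{2^m}$ and is available from the structure theory of metacyclic $2$-groups (cf.\ \cite{Huppert,Sambale}). Granting it, I would use the defining metacyclic extension $1\to A\to P\to B\to1$, with $A=\langle a\rangle\cong C_{2^\alpha}$ normal and $B=P/A\cong C_{2^\beta}$ both cyclic. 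Power-regularity gives $\Omega_n(P)\cap A=\{x\in A:x^{2^n}=1\}=\Omega_n(A)$, a cyclic group of order $2^{\min(n,\alpha)}\le2^n$, while every element of the image $\Omega_n(P)A/A$ has order dividing $2^n$ in $B$, so that $\Omega_n(P)A/A\subseteq\Omega_n(B)$ has order at most $2^n$. Multiplying via the second isomorphism theorem,
\[
|\Omega_n(P)|=|\Omega_n(P)\cap A|\cdot|\Omega_n(P)A/A|\le 2^n\cdot2^n=2^{2n},
\]
as required, which completes the argument.

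The main obstacle is establishing power-regularity cleanly, since it is precisely the property separating the non-maximal-class metacyclic $2$-groups from $D,Q,SD$ and making the telescoping valid: without it the crucial identity $\Omega_n(P)\cap A=\Omega_n(A)$ collapses (for instance $\Omega_1(D_{16})=D_{16}$, whereas $\Omega_1(\langle r\rangle)$ has order $2$, and indeed $|\Omega_1(D_{16})|=16>4$). If a direct citation for power-regularity is not at hand, I would fall back on the explicit generator--relation presentations of metacyclic $2$-groups not of maximal class, treating separately the abelian case $P\cong C_{2^\alpha}\times C_{2^\beta}$ (where $\Omega_n(P)$ trivially equals the set of elements of order dividing $2^n$ and the bound is immediate) and the nonabelian case, reading off $\Omega_n(P)$ from the relations. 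Since metacyclic groups have rank at most $2$, a homocyclic subgroup is automatically of the form $C_{2^n}^2$, so no higher-rank configuration needs to be considered.
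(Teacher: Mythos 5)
Your opening reduction is fine, but the step you yourself identify as carrying ``the entire content'' --- the bound $\lvert\Omega_n(P)\rvert\le 2^{2n}$ via ``power-regularity'' --- is false, and with it the whole strategy collapses. Concretely, let $P=\langle a,b\mid a^8=b^4=1,\ bab^{-1}=a^{-1}\rangle\cong C_8\rtimes C_4$. This is a metacyclic group of order $32$ and nilpotency class $3$, hence not of maximal class. Since $(a^ib)^2=(a^ib^3)^2=b^2$ for every $i$, all $16$ elements outside $\langle a,b^2\rangle$ have order exactly $4$; the set of elements of order dividing $4$ therefore has $8+16=24$ elements and is not a subgroup, so power-regularity fails here --- it does not fail ``exactly for $D_{2^m}$, $Q_{2^m}$, $SD_{2^m}$'' as you claim. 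Worse, $b$ and $ab$ have order $4$ and generate $P$, so $\Omega_2(P)=P$ has order $32>2^4$, and $\Omega_2(P)\cap\langle a\rangle=\langle a\rangle\cong C_8$, so both of your factor estimates ($\Omega_n(P)\cap A\subseteq\Omega_n(A)$ and $\Omega_n(P)A/A\subseteq\Omega_n(B)$) break down. Your fallback plan of reading the claim off the presentations cannot rescue this, because the statement being read off is simply wrong. Note that the lemma itself is untouched by this example: the unique abelian maximal subgroup of $P$ is $\langle a,b^2\rangle\cong C_8\times C_2$, so $P$ contains no subgroup isomorphic to $C_4^2$ and imposes no constraint.

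This counterexample is precisely the phenomenon the paper warns about in the remark following the lemma (``it can happen that $P/\Omega(P)$ has maximal class'' --- here $P/\Omega(P)\cong D_8$), and it explains why the paper's proof is structured differently: it argues by induction on $k$ for $Q\cong C_{2^k}^2\le P$, with base case $\Omega(P)\cong C_2^2$ (Exercise~1.85 in Berkovich), and in the inductive step it uses the existence of $Q$ itself to show that $P/\Omega(P)$ cannot have maximal class. In other words, the hypothesis that a homocyclic subgroup exists must do real work throughout the argument; your proof discards $H$ after the trivial inclusion $H\subseteq\Omega_n(P)$ and then tries to establish an absolute bound on $\lvert\Omega_n(P)\rvert$ that in fact holds only in the presence of such a subgroup --- a bound essentially equivalent to the lemma itself --- so a counting argument of this shape cannot close the gap without becoming circular.
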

\begin{proof}
Let $C_{2^k}^2\cong Q\le P$ with $k\in\mathbb{N}$. We argue by induction on $k$.
By Exercise~1.85 in \cite{Berkovich1}, $C_2^2\cong\Omega(P)$. Hence, we may assume $k\ge 2$. By induction it suffices to show that $P/\Omega(P)$ does not have maximal class. Let us assume the contrary.
Since $P/\Omega(P)$ contains more than one involution, $P/\Omega(P)$ is a dihedral group or a semidihedral group. Let $\langle x\rangle\unlhd P$ such that $P/\langle x\rangle$ is cyclic. Then $\langle x\rangle\Omega(P)/\Omega(P)$ and $(P/\Omega(P))/(\langle x\rangle\Omega(P)/\Omega(P))\cong P/\langle x\rangle\Omega(P)$ are also cyclic. This yields $|P/\langle x\rangle\Omega(P)|=2$ and $|P/\langle x\rangle|=4$. Since $P/\Omega(P)$ is a dihedral group or a semidihedral group, there exists an element $y\in P$ such that the following holds:
\begin{enumerate}[(i)]
\item $P/\Omega(P)=\langle x\Omega(P),y\Omega(P)\rangle$,
\item $y^2\in\Omega(P)$,
\item $^yx\equiv x^{-1}\pmod{\Omega(P)}$ or $^yx\equiv x^{-1+2^{n-2}}\pmod{\Omega(P)}$ with $|P/\Omega(P)|=2^n$ and without loss of generality, $n\ge 4$.
\end{enumerate}
Since $P=\langle x,y\rangle\Omega(P)\subseteq\langle x,y\rangle\Phi(P)=\langle x,y\rangle$, we have shown that $P$ is the semidirect product of $\langle x\rangle$ with $\langle y\rangle$. Moreover \[^yx\in\{x^{-1},\ x^{-1+2^{n-1}},\ x^{-1+2^{n-2}},\ x^{-1-2^{n-2}}\}.\]
Since $Q\cap\langle x\rangle$ and $Q/Q\cap\langle x\rangle\cong Q\langle x\rangle/\langle x\rangle$ are cyclic, we get $k=2$ and $x^{2^{n-2}}\in Q$. But then, $Q$ cannot be abelian, since $n\ge 4$. Contradiction.
\end{proof}

Note that in general for a metacyclic $2$-group $P$ which does not have maximal class it can happen that $P/\Omega(P)$ has maximal class.

\begin{Proposition}\label{nonhomo2}
Let $P$ be a bicyclic $2$-group such that $P'\cong C_{2^r}^2$ for some $r\ge 2$. Then every fusion system on $P$ is nilpotent.
\end{Proposition}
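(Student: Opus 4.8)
The plan is to show that a nonnilpotent fusion system cannot exist, by proving that $P$ has no $\mathcal{F}$-essential subgroup. First I would record the automorphism input: since $P'\cong C_{2^r}^2$ is noncyclic, $P$ is nonabelian and in particular not homocyclic, and it is not isomorphic to $Q_8$; moreover $P$ is nonmetacyclic, because a metacyclic $2$-group has cyclic commutator subgroup. Hence Proposition~\ref{biaut} applies and $\Aut(P)$ is a $2$-group, so $\Aut_{\mathcal{F}}(P)=\Inn(P)$ by saturation. By Alperin's Fusion Theorem a nonnilpotent $\mathcal{F}$ must therefore possess at least one $\mathcal{F}$-essential subgroup $Q$, and the whole task reduces to deriving a contradiction from the existence of such a $Q$.

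Next I would eliminate all but one configuration for $Q$. Since every section of the bicyclic group $P$ has rank at most $3$ while an essential subgroup has rank at least $2$, the rank of $Q$ is $2$ or $3$. If $Q$ had rank $2$, then Proposition~\ref{rank2ess} would force $P\cong C_{2^m}\wr C_2$; but $(C_{2^m}\wr C_2)'$ is cyclic, contradicting $P'\cong C_{2^r}^2$. If $Q$ were a normal essential subgroup of rank $3$, Proposition~\ref{essnormal} would again make $P'$ cyclic. Hence $Q$ is a non-normal $\mathcal{F}$-essential subgroup of rank $3$, so Proposition~\ref{essnotnormal} describes $\N_P(Q)$ and Proposition~\ref{biess} leaves exactly the three possibilities $Q\cong C_{2^m}\times C_2^2$, $Q\cong C_{2^m}\times Q_8$ and $Q\cong C_{2^m}\mathop{\ast}Q_8$.

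The core of the proof, and the step I expect to be the main obstacle, is to exclude these three types using the explicit presentation of $P$ from Janko's classification (the theorem in \cite{Janko} governing the homocyclic case $P'\cong C_{2^r}^2$). I would first set up the ambient data: the unique nonmetacyclic maximal subgroup $M$, the fact that $\Phi(P)$ is abelian, metacyclic and contains $P'$, and --- via Lemma~\ref{metanormal} applied to the abelian, hence non-maximal-class, group $\Phi(P)$ --- the identification $P'=\Omega_r(\Phi(P))$, which fixes the position of $P'$ and the inversion action of the outer generator on the elements of order $4$ in $\Phi(P)$. The central-product case $Q\cong C_{2^m}\mathop{\ast}Q_8$ should then fall immediately: here $\Z(Q)$ is cyclic, so $\Z(P)\le\C_P(Q)=\Z(Q)$ would be cyclic, contradicting the noncyclic centre that Janko's theorem provides in this case. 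For $Q\cong C_{2^m}\times C_2^2$ and $Q\cong C_{2^m}\times Q_8$ I would copy the computation of Proposition~\ref{nonhomo} almost verbatim: pick $\beta\in Q\setminus\Phi(P)$, expand $\beta^2$ in the generators and relations of $P$, observe that $\beta$ cannot centralise an element of order $4$ of the abelian group $\Phi(P)$ because the outer generator inverts all of them, conclude $m=1$, and then contradict the description of $\N_P(Q)$ supplied by Proposition~\ref{essnotnormal}.

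The difficulties are all computational rather than conceptual. One must carry the several structure constants of Janko's presentation through the expansions of $\beta^2$ and of the relevant commutators, and verify that the contradiction persists across each branch of the classification and across the possibilities $D_8\times C_{2^m}$, $Q_{16}\times C_{2^m}$, $SD_{16}\times C_{2^m}$ and $Q_{16}\mathop{\ast}C_{2^m}$ for $\N_P(Q)$. Modulo these calculations the skeleton is identical to that of Proposition~\ref{nonhomo}: a homocyclic commutator subgroup is simply incompatible with any of the admissible essential subgroups, so no nonnilpotent fusion system can occur.
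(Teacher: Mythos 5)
Your reduction is exactly the paper's own setup: Proposition~\ref{biaut} plus Alperin's theorem to force an essential subgroup, Propositions~\ref{rank2ess} and \ref{essnormal} to rule out rank~$2$ and normal essentials, and Propositions~\ref{biess}, \ref{essnotnormal} to leave the three candidate types, followed by the computational scheme of Proposition~\ref{nonhomo} inside Janko's presentation (here Theorem~4.9 in \cite{Janko}). However, the part you defer as ``computational rather than conceptual'' contains two genuine gaps. The first is your one-line dismissal of $Q\cong C_{2^m}\mathop{\ast}Q_8$ via ``the noncyclic centre that Janko's theorem provides in this case.'' Janko's theorem for $P'\cong C_{2^r}^2$ does \emph{not} provide a noncyclic centre. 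The paper uses precisely your shortcut in Proposition~\ref{nonhomo}, where Theorems~4.11/4.12 of \cite{Janko} do supply it, but in the homocyclic case it deliberately argues differently: it chooses an involution $\beta\in Q\setminus\Phi(P)$ (possible since $C_{2^m}\mathop{\ast}Q_8\cong C_{2^m}\mathop{\ast}D_8$), finds an element of order $4$ in $Q\cap\Phi(P)$ fixed by $\beta$, and contradicts the inversion action established before. Cyclic centres genuinely occur among bicyclic $2$-groups with noncyclic derived subgroup (the paper even runs a separate ``$\Z(P)$ cyclic'' case in the proposition on $P'\cong C_{2^n}\times C_2$, where the central-product case then requires Corollary~\ref{centerfree} and a quotient argument), so your treatment of this case would simply fail.

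The second gap is your assertion that $\Phi(P)$ is abelian. In this case Janko gives only that $\Phi(P)$ is abelian \emph{or minimal nonabelian}, and the difference is not cosmetic. For $\beta=xa^{2i}v^j$ to act as inversion on the order-$4$ elements one needs $\Omega_2(\Phi(P))\subseteq\Z(\Phi(P))$, which the paper obtains from $\Omega_2(\Phi(P))\subseteq\mho(\Phi(P))=\Phi(\Phi(P))\subseteq\Z(\Phi(P))$; the first inclusion uses that the generators $v^{2^{r-2}},b^{2^{r-2}}$ of $\Omega_2(\Phi(P))$ are squares, i.e. it needs $r\ge 3$. Consequently the case $r=2$ (so $|P|=2^7$) escapes the hand argument entirely, and the paper settles it by computer verification; your proposal, having assumed $\Phi(P)$ abelian, cannot even detect that this exceptional case exists. (Relatedly, your justification for applying Lemma~\ref{metanormal} --- ``abelian, hence non-maximal-class'' --- is unavailable in the minimal nonabelian case; the paper instead excludes maximal class via $C_4^2\cong\Omega_2(P')\le\Phi(P)$, which works in both cases.) With the central-product case reargued along the paper's lines and the $r=2$ case acknowledged and verified separately, your outline would match the paper's proof.
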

\begin{proof}
We apply Theorem~4.9 in \cite{Janko}. The general argument is quite similar as in Proposition~\ref{nonhomo}, but we need more details. Assume first that $Q\cong C_{2^m}\times C_2^2$ for some $m\ge 1$ is $\mathcal{F}$-essential in $P$ ($m$ is not used in the statement of Theorem~4.9 in \cite{Janko}). Since $\Phi(P)$ has rank $2$, we get $Q\cap\Phi(Q)\cong C_{2^m}\times C_2$. We choose $\beta:=xa^{2i}v^j\in Q\setminus\Phi(P)$. 
Suppose first that $m\ge 2$. Then $\beta$ fixes an element $\delta\in Q\cap\Phi(P)$ of order $4$. Now $\Phi(P)$ is a metacyclic group with $\Omega(\Phi(P))\cong C_2^2$ and $C_4^2\cong\Omega_2(P')\le\Phi(P)$. So Lemma~\ref{metanormal} implies $\Omega_2(\Phi(P))=\langle v^{2^{r-2}},b^{2^{r-2}}\rangle\cong C_4^2$. In case $r=2$ we have $|P|=2^7$, and the claim follows by a computer verification. Thus, we may assume $r\ge 3$. Then $x^{-1}v^{2^{r-2}}x=v^{-2^{r-2}}$. Moreover, $\Omega_2(\Phi(P))\subseteq\mho(\Phi(P))=\Phi(\Phi(P))\subseteq\Z(\Phi(P))$, since $\Phi(P)$ is abelian or minimal nonabelian depending on $\eta$. This shows that $\beta$ acts as inversion on $\Omega_2(\Phi(P))$ and thus cannot fix $\delta$. It follows that $m=1$. Then $\lvert\N_P(Q)\cap\Phi(Q)\rvert\le 8$. In particular there exists an element $\rho\in\Omega_2(\Phi(P))\setminus(\N_P(Q)\cap\Phi(Q))$. Then $^\rho\beta=\beta\rho^{-2}\in Q$. Since $Q=\langle\beta\rangle(Q\cap\Phi(P))$, we derive the contradiction $\rho\in\N_P(Q)$.

Now assume $Q\cong C_{2^m}\times Q_8$ for some $m\ge 1$. 
We choose again $\beta:=xa^{2i}v^j\in Q\setminus\Phi(P)$. 
If $\Phi(P)$ contains a subgroup isomorphic to $Q_8$, then $\Omega_2(\Phi(P))$ cannot be abelian. %However, this contradicts $|(\Phi(P))'|\le 2$. 
So, in case $m=1$ we have $\N_P(Q)\cap\Phi(P)\cong C_8\times C_2$. Then the argument above reveals a contradiction (using $r\ge 3$). Now let $m\ge 2$.
We write $Q=\langle q_1\rangle\times\langle q_2,q_3\rangle$ where $\langle q_1\rangle\cong C_{2^m}$ and $\langle q_2,q_3\rangle\cong Q_8$. In case $q_1\notin\Phi(P)$ we can choose $\beta=q_1$. In any case it follows that $\beta$ fixes an element of order $4$ in $Q\cap\Phi(P)$. This leads to a contradiction as above.

Finally suppose that $Q\cong C_{2^m}\mathop{\ast}Q_8\cong C_{2^m}\mathop{\ast}D_8$ for some $m\ge 2$. Here we can choose $\beta\in Q\setminus\Phi(P)$ as an involution. Then there is always an element of order $4$ in $Q\cap\Phi(P)$ which is fixed by $\beta$. The contradiction follows as before.
\end{proof}

\begin{Proposition}
Let $P$ be a bicyclic $2$-group such that $P'\cong C_{2^r}\times C_{2^{r+s+1}}$ for some $r,s\ge 2$. Then every fusion system on $P$ is nilpotent.
\end{Proposition}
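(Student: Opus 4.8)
The plan is to mirror the proofs of Propositions~\ref{nonhomo} and \ref{nonhomo2}, now invoking the structure theorem of \cite{Janko} governing this family (the analogue of Theorems~4.11 and 4.12 used in Proposition~\ref{nonhomo}), in which the two cyclic factors of $P'$ have exponents differing by $s+1\ge 3$. From that theorem I extract the features used repeatedly above: $\Phi(P)$ is abelian and metacyclic, $P$ has a unique nonmetacyclic maximal subgroup $M=E\langle a^2\rangle$, and an explicit set of defining relations among the generators $x,a,v,b,u,z$. Suppose for contradiction that $\mathcal{F}$ is a nonnilpotent fusion system on $P$. By Proposition~\ref{essnormal} a normal $\mathcal{F}$-essential subgroup would force $P'$ cyclic, so every $\mathcal{F}$-essential subgroup is non-normal; by Proposition~\ref{rank2ess} none has rank $2$, so each has rank $3$ and, by Proposition~\ref{biess}, is isomorphic to $C_{2^m}\times C_2^2$, $C_{2^m}\times Q_8$ or $C_{2^m}\mathop{\ast}Q_8$. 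Since such a $Q$ has rank $3$ it cannot lie in a metacyclic maximal subgroup, hence $Q\le M$.

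For $Q\cong C_{2^m}\times C_2^2$: because $\Phi(P)$ has rank $2$ we get $Q\cap\Phi(P)\cong C_{2^m}\times C_2$, and I choose $\beta=xa^{2i}v^j\in Q\setminus\Phi(P)$. The heart of the argument is the action of $\beta$ on the elements of order $4$ in $\Phi(P)$. Using Lemma~\ref{metanormal} I locate the homocyclic layer $\Omega_2(\Phi(P))\cong C_4^2$ inside the abelian metacyclic group $\Phi(P)$ and read off from Janko's relations that $\beta$ (equivalently $x$) inverts it; since $\Omega_2(\Phi(P))\subseteq\mho(\Phi(P))=\Phi(\Phi(P))\subseteq\Z(\Phi(P))$, if $m\ge 2$ then the abelian group $Q$ would force $\beta$ to centralize an element of order $4$ of $Q\cap\Phi(P)$, contradicting the inversion. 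This forces $m=1$, after which $\lvert\N_P(Q)\cap\Phi(Q)\rvert\le 8$, so I can pick $\rho\in\Omega_2(\Phi(P))\setminus(\N_P(Q)\cap\Phi(Q))$; then ${}^\rho\beta=\beta\rho^{-2}\in Q$, and because $Q=\langle\beta\rangle(Q\cap\Phi(P))$ this yields the contradiction $\rho\in\N_P(Q)$.

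For $Q\cong C_{2^m}\times Q_8$ the same scheme applies: writing $Q=\langle q_1\rangle\times\langle q_2,q_3\rangle$ with $\langle q_2,q_3\rangle\cong Q_8$, I again produce an element of $Q\cap\Phi(P)$ of order $4$ fixed by a suitable $\beta$ (choosing $\beta=q_1$ when $q_1\notin\Phi(P)$), contradicting the inversion action; the case $m=1$ is closed by the same normalizer argument, and any sporadic small order (e.g. when $r$ is minimal) is settled by a direct GAP computation as in Proposition~\ref{nonhomo2}. Finally $Q\cong C_{2^m}\mathop{\ast}Q_8\cong C_{2^m}\mathop{\ast}D_8$ is excluded exactly as before: its center is cyclic, whence $\Z(P)\le\C_P(Q)=\Z(Q)$ would be cyclic, which is impossible when $\Z(P)$ is noncyclic as in Proposition~\ref{nonhomo}, and otherwise one takes $\beta\in Q\setminus\Phi(P)$ to be an involution and reaches a contradiction from the computed $\beta^2\ne 1$ as in Proposition~\ref{nonhomo2}.

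The main obstacle will be the bookkeeping with Janko's presentation: I must verify that for the larger exponent $2^{r+s+1}$ with $s\ge 2$ the generator $x$ still inverts all of $\Omega_2(\Phi(P))$ and that $\Phi(P)$ remains abelian and metacyclic, so that the ``fixes an element of order $4$'' contradiction survives unchanged. The genuinely new point compared with Proposition~\ref{nonhomo} is checking that widening the gap between the two factors of $P'$ neither introduces extra involutions in $M\setminus\Phi(P)$ nor alters the commutator relations so as to let $\beta$ centralize part of $\Omega_2(\Phi(P))$; once this is confirmed the three cases close off along the lines above.
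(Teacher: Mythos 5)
Your proposal is correct and follows essentially the same route as the paper: the paper's proof also just combines the arguments of Propositions~\ref{nonhomo} and \ref{nonhomo2}, applied to the two cases (a) and (b) of the relevant structure theorem (Theorem~4.13 in \cite{Janko}). One caveat concerning your fallback for small $r$: the paper observes that, in contrast to Proposition~\ref{nonhomo2}, no separate treatment of $r=2$ is needed here, because the relevant generator of $\Omega_2(\Phi(P))$ is now the higher power $v^{2^{r+s-1}}$ (with $r+s-1\ge 3$, as $s\ge 2$), so the inversion relation $x^{-1}v^{2^{r+s-1}}x=v^{-2^{r+s-1}}$ holds for \emph{all} $r\ge 2$. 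This matters beyond elegance: your proposed GAP computation ``when $r$ is minimal'' could not substitute for this observation, since fixing $r=2$ still leaves infinitely many groups (one for each $s\ge 2$), whereas in Proposition~\ref{nonhomo2} the case $r=2$ pinned down a single group of order $2^7$. Since the verification you flag as the main obstacle (that $x$ inverts $\Omega_2(\Phi(P))$) does go through uniformly, the hedge is never invoked and your argument closes as planned. A last cosmetic point: for the central product case $Q\cong C_{2^m}\mathop{\ast}Q_8$ with $\Z(P)$ cyclic, the contradiction in Proposition~\ref{nonhomo2} is again that the involution $\beta\in Q\setminus\Phi(P)$ would fix an element of order $4$ in $Q\cap\Phi(P)$, not the computation $\beta^2\ne 1$ (which belongs to the earlier proposition treating Theorem~4.7 of \cite{Janko}); the argument you need is the one you already use in your other two cases.
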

\begin{proof}
Here Theorem~4.13 in \cite{Janko} applies. The proof is a combination of the proofs of Proposition~\ref{nonhomo} and Proposition~\ref{nonhomo2}. In fact for part (a) of Theorem~4.13 we can copy the proof of Proposition~\ref{nonhomo}. Similarly the arguments of Proposition~\ref{nonhomo2} remain correct for case (b). Here observe that there is no need to discuss the case $r=2$ separately, since $x^{-1}v^{2^{r+s-1}}x=v^{-2^{r+s-1}}$.
\end{proof}

Now it suffices to consider the case where $P'$ contains a cyclic maximal subgroup. If $P'$ is noncyclic, Theorem~4.8 in \cite{Janko} applies. This case is more complicated, since $|P/P'|$ is not bounded anymore.

\begin{Proposition}
Let $P$ be a bicyclic $2$-group such that $P'\cong C_{2^n}\times C_2$ for some $n\ge 2$, and $P/\Phi(P')$ has a normal elementary abelian subgroup of order $8$. Then every fusion system on $P$ is nilpotent.
\end{Proposition}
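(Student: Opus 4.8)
The plan is to argue by contradiction. Suppose $\mathcal{F}$ is a nonnilpotent fusion system on $P$. Since $P'\cong C_{2^n}\times C_2$ is noncyclic, Proposition~\ref{essnormal} shows that no $\mathcal{F}$-essential subgroup can be normal in $P$; hence every $\mathcal{F}$-essential subgroup $Q$ is a non-normal subgroup of rank $3$, and by Proposition~\ref{biess} it is isomorphic to one of $C_{2^m}\times C_2^2$, $C_{2^m}\times Q_8$, or $C_{2^m}\mathop{\ast}Q_8$. In each case Proposition~\ref{essnotnormal} determines $\N_P(Q)$. Throughout I would work with the generators and relations furnished by Theorem~4.8 in \cite{Janko}, writing $M$ for the unique nonmetacyclic maximal subgroup of $P$ and recording that $\Phi(P)$ is abelian and metacyclic with $\Omega(\Phi(P))\cong C_2^2$. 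The overall shape of the argument then follows Propositions~\ref{nonhomo} and~\ref{nonhomo2}, treating the three possible types of $Q$ separately.

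For $Q\cong C_{2^m}\times C_2^2$ I would pick a suitable $\beta=xa^{2i}v^j\in Q\setminus\Phi(P)$ and study its action on $\Phi(P)$. Because $Q$ is abelian, $\beta$ centralizes $Q\cap\Phi(P)$, which contains an element of order $4$ as soon as $m\ge 2$. On the other hand, applying Lemma~\ref{metanormal} to the metacyclic (non-maximal-class) group $\Phi(P)$ pins down $\Omega_2(\Phi(P))$ as a homocyclic subgroup, and the relations of Theorem~4.8 should show that $x$, and hence $\beta$, acts by inversion on the elements of order $4$ of $\Phi(P)$. Since an element cannot be simultaneously fixed and inverted unless it has order at most $2$, this forces $m=1$. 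A count of $|\N_P(Q)\cap\Phi(Q)|$ then produces an element $\rho\in\Omega_2(\Phi(P))$ with $^{\rho}\beta=\beta\rho^{-2}\in Q$, and since $Q=\langle\beta\rangle(Q\cap\Phi(P))$ this yields the contradiction $\rho\in\N_P(Q)$. For $Q\cong C_{2^m}\times Q_8$ the same computation applies almost verbatim, after first observing, as in Proposition~\ref{nonhomo2}, that $\Phi(P)$ cannot contain a copy of $Q_8$ (else $\Omega_2(\Phi(P))$ would be nonabelian, contrary to $\Phi(P)$ being abelian). Finally, for $Q\cong C_{2^m}\mathop{\ast}Q_8$ the center $\Z(Q)$ is cyclic, so either $\Z(P)\le\C_P(Q)=\Z(Q)$ is noncyclic, giving an immediate contradiction as in Proposition~\ref{nonhomo}, or $\Z(P)$ is cyclic and one invokes Proposition~\ref{essnotnormal} to get $\N_P(Q)\cong Q_{16}\mathop{\ast}C_{2^m}\cong D_{16}\mathop{\ast}C_{2^m}$, which contains an involution in $\N_P(Q)\setminus\Phi(P)$ already ruled out above.

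The main obstacle, as the text flags just before the statement, is that $|P/P'|$ is no longer bounded. In Propositions~\ref{nonhomo} and~\ref{nonhomo2} one could confine the order-$4$ elements of $\Phi(P)$ to a known two-generator subgroup and read off the inversion directly from the relations; here the larger and parameter-dependent presentation of Theorem~4.8 means I expect to lean more heavily on Lemma~\ref{metanormal} to locate $\Omega_2(\Phi(P))$ and to verify the inversion uniformly across Janko's exponent parameters. Managing these parameters in one stroke, rather than splitting into many subcases, is the technical crux, and I anticipate that one or two smallest configurations (where $\Phi(P)$ is too small for the inversion argument to bite) will have to be disposed of by an explicit computer check, exactly as is done for the $r=2$ case in Proposition~\ref{nonhomo2}.
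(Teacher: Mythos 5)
Your reduction breaks down at its central step: for the groups of Theorem~4.8 in \cite{Janko} (the groups of this proposition) it is simply \emph{not true} that $x$, or an involution $\beta\in Q\setminus\Phi(P)$, inverts the elements of order $4$ of $\Phi(P)$. Here $\Phi(P)=\langle v\rangle\rtimes\langle a^2\rangle$ is in general nonabelian --- so your standing assumption that $\Phi(P)$ is abelian already fails --- and the paper's own computation shows that $x$ centralizes a subgroup $\langle\lambda\rangle\times\langle z\rangle\cong C_{2^{m-1}}\times C_2$ of $\Phi(P)$, with $\lambda$ of the form $a^4v^i$. Hence for every $m\ge 3$ there exist elements of order $4$ in $\Phi(P)$ fixed by $x$, the fixed-versus-inverted dichotomy never bites, and candidate essential subgroups $\langle\lambda,z,x\rangle\cong C_{2^{m-1}}\times C_2^2$ (and their analogues of types $C_{2^{m-1}}\times Q_8$ and $C_{2^m}\mathop{\ast}Q_8$) with arbitrarily large cyclic part pass through your argument untouched. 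This cannot be repaired by computer checks of ``one or two smallest configurations'': the failure is systematic for all $m\ge 3$, and it is exactly the phenomenon behind the paper's warning that $|P/P'|$ is no longer bounded. Your final case is unsound for the same reason: the involution in $\N_P(Q)\setminus\Phi(P)$ that you dismiss as ``already ruled out'' was never ruled out --- in these groups such involutions genuinely exist (one can even arrange $\beta=x$); the exclusion via $\beta^2\in u\langle z\rangle\ne 1$ belongs to the Theorem~4.7 groups of the preceding proposition, not to these.

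The paper's proof therefore runs on different engines, after splitting into the two families of Theorem~4.8 according to whether $\Z(P)$ is cyclic. For $Q\cong C_{2^t}\times C_2^2$ with $t=1$ the contradiction comes from Satz~III.14.23 in \cite{Huppert}: $Q\cap\Phi(P)=\Omega(\Phi(P))$ is central, so $P/\Omega(\Phi(P))$ has maximal class, forcing $m=1$. For $t\ge 2$ one normalizes $\beta$ to $x$, determines $\C_{\Phi(P)}(x)=\langle\lambda\rangle\times\langle z\rangle$ exactly, concludes $Q=\langle\lambda,z,x\rangle$, $t=m-1$ and $\N_P(Q)=Q\langle v^{2^{n-2}}\rangle$, and then exhibits the element $\mu:=a^2v^{2(1+s)}$ with ${}^{\mu}x=ux\in Q$, so that $\mu\in\N_P(Q)$ --- a contradiction with the computed normalizer, not with any fixed-point count. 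Finally, the types $C_{2^t}\times Q_8$ and $C_{2^{t+1}}\mathop{\ast}Q_8$ in the cyclic-center case are handled by an idea absent from your sketch: Corollary~\ref{centerfree} gives $z\in\Z(\mathcal{F})$, Theorem~6.3 in \cite{Linckelmann} makes $Q/\langle z\rangle$ essential in $P/\langle z\rangle$, whose commutator subgroup is $C_{2^{n-1}}\times C_2$, and one concludes by induction. Your framing (non-normality of essentials via Proposition~\ref{essnormal}, the three types via Proposition~\ref{biess}, normalizers via Proposition~\ref{essnotnormal}) is correct, but the mechanism that actually produces the contradiction is missing.
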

\begin{proof}
%First we exploit the structure of $\Phi(P)=\langle a^2\rangle\langle v\rangle$. 
There are two possibilities for $P$ according to if $\Z(P)$ is cyclic or not. We handle them separately. 

\textbf{Case~1: }$\Z(P)$ noncyclic.\\
Then $a^{2^m}=uz^{\eta}$. Moreover,
\begin{equation}\label{a2v}
a^{-2}va^2=a^{-1}vuv^{2+4s}a=a^{-1}uv^{3+4s}a=u(uv^{3+4s})^{3+4s}=v^{(3+4s)^2}\in v\langle v^8\rangle.
\end{equation}
Using this we see that $\langle a^{2^{m-1}},v^{2^{n-2}}\rangle\cong C_4^2$. Thus, Lemma~\ref{metanormal} implies $\Omega_2(\Phi(P))=\langle a^{2^{m-1}},v^{2^{n-2}}\rangle$. As usual we assume that there is an $\mathcal{F}$-essential subgroup $Q\cong C_{2^t}\times C_2^2$ for some $t\ge 1$. Then $Q\cap\Phi(P)\cong C_{2^t}\times C_2$, since $\Phi(P)$ has rank $2$. For $t=1$ we obtain $Q\cap\Phi(P)=\Omega(\Phi(P))\subseteq\Z(P)$. Write $\overline{P}:=P/\Omega(\Phi(P))$, $\overline{Q}:=Q/\Omega(\Phi(P))$ and so on. Then $\C_{\overline{P}}(\overline{Q})\subseteq\overline{\N_P(Q)}$. So by Satz~III.14.23 in \cite{Huppert}, $\overline{P}$ has maximal class. Hence, $P'=\Phi(P)$ and $m=1$. Contradiction. Thus, we may assume $t\ge 2$. Then as usual we can find an element $\delta\in Q\cap\Phi(P)$ of order $4$ which is fixed by some involution $\beta\in Q\setminus\Phi(P)$. We write $\delta=a^{2^{m-1}d_1}v^{2^{n-2}d_2}$ and $\beta=xv^ja^{2i}$. Assume first that $2\mid d_1$. Then $2\nmid d_2$. Since $a^{2^m}v^{2^{n-2}}\in\Z(\Phi(P))$, it follows that $\delta={^\beta\delta}={^x\delta}=\delta^{-1}$. This contradiction shows $2\nmid d_1$. After replacing $\delta$ with its inverse if necessary, we can assume $d_1=1$. Now we consider $\beta$. We have
\[1=\beta^2=(xv^ja^{2i})^2\equiv x^2v^{2j}a^{4i}\equiv a^{4i}\pmod{P'}.\]
Since 
\[2^{n+m}=\lvert\Phi(P)\rvert=\frac{|\langle a^2\rangle||P'|}{|\langle a^2\rangle\cap P'|}=\frac{2^{n+m+1}}{|\langle a^2\rangle\cap P'|},\]
we get $2^{m-2}\mid i$. In case $i=2^{m-2}$ we get the contradiction 
\[\langle z\rangle\ni x^2=xv^{j-2^{n-2}d_2}xv^{j-2^{n-2}d_2}=(\beta\delta^{-1})^2=\delta^2\in u\langle z\rangle.\]
Hence, $2^{m-1}\mid i$. 
So, after multiplying $\beta$ with $\delta^2$ if necessary, we may assume $i=0$, i.\,e. $\beta=xv^j$. Then $1=xv^jxv^j=x^2$. 
Conjugation with $a^{-1}$ gives $\beta=a^{-1}xv^ja=xv^{-1}a^{-1}v^ja=xu^jv^{(3+4s)j-1}$. Since $u\in Q$, we may assume that $\beta=xv^{2j}$. 
%Since $P/R$ is minimal nonabelian of type $(m,1)$, we can conjugate $Q$ such that $\beta=xu^iv^{2j}$ and $\beta=xv^{2j}$ after changing the representative again.
After we conjugate $Q$ by $v^j$, we even obtain $\beta=x$. Since $x(a^2v^i)x^{-1}=a^2uv^{4(1+s)-i}$, no element of the form $a^2v^i$ is fixed by $x$. On the other hand
\[x(a^4v^i)x^{-1}=(a^2uv^{4(1+s)})^2v^{-i}=a^4v^{4(1+s)(3+4s)^2+4(1+s)-i}.\]
This shows that there is an $i$ such that $a^4v^i=:\lambda$ is fixed by $x$. Assume there is another element $\lambda_1:=a^4v^j$ which is also fixed by $x$. 
Then $\lambda^{-1}\lambda_1=v^{j-i}\in\langle z\rangle$. This holds in a similar way for elements containing higher powers of $a$. In particular $u=a^{2^m}z^{\eta}\in\langle\lambda,z\rangle$. 
Recall that $\Phi(P)=\langle v\rangle\rtimes\langle a^2\rangle$. This shows $\C_{\Phi(P)}(x)=\langle\lambda\rangle\times\langle z\rangle\cong C_{2^{m-1}}\times C_2$. Since $Q\cap\Phi(P)\subseteq\C_{\Phi(P)}(x)$ and $Q=(Q\cap\Phi(P))\langle x\rangle$, we deduce $\C_{\Phi(P)}(x)\subseteq\C_P(Q)\subseteq Q$. Moreover, $Q\cap\Phi(P)=\C_{\Phi(P)}(x)$ and $t=m-1$. Therefore, $Q=\langle\lambda,x,z\rangle$. The calculation above shows that there is an element $\mu:=a^2v^j$ such that $^\mu x=ux\in Q$. Now $\mu^2\in\C_{\Phi(P)}(x)$ implies $\C_{\Phi(P)}(x)=\langle\mu^2,z\rangle$ and $\mu\in\N_P(Q)=Q\langle v^{2^{n-2}}\rangle$. Contradiction.

Now assume $Q\cong C_{2^t}\times Q_8$ for some $t\ge 1$. 
Since $\Phi(P)$ does not contain a subgroup isomorphic to $Q_8$, we see that $\Omega(\Phi(P))\subseteq Q$. First assume $t=1$. Then we look again at the quotients $\overline{P}:=P/\Omega(\Phi(P))$ and $\overline{Q}:=Q/\Omega(\Phi(P))\cong C_2^2$. Since $\N_P(Q)$ acts nontrivially on $\overline{Q}$, we get $\C_{\overline{P}}(\overline{Q})\subseteq\overline{Q}$. In particular Proposition~1.8 in \cite{Berkovich1} implies that $\overline{P}$ has maximal class. This leads to a contradiction as in the first part of the proof. Thus, we may assume $t\ge 2$ from now on. Then $\Omega_2(\Phi(P))\subseteq Q$. Since $Q$ contains more elements of order $4$ than $\Phi(P)$, we can choose $\beta\in Q\setminus\Phi(P)$ of order $4$. Write $\beta=xa^{2i}v^j$. Then $\beta^2\in\Omega(\Phi(P))\subseteq P'$. So the same discussion as above shows that we can assume $\beta=x$. In particular $|\langle x\rangle|=4$. 
%Since $\C_Q(x)\cong C_{2^t}\times C_4$, we have $Q=\C_Q(x)\langle v^{2^{n-2}}\rangle$. This 
Since $\C_{\Phi(P)}(x)$ is abelian, $\lambda$ centralizes $(\C_Q(x)\cap\Phi(P))\langle x\rangle\langle v^{2^{n-2}}\rangle=\C_Q(x)\langle v^{2^{n-2}}\rangle=Q$. This shows $\lambda\in Q$ and $t=m-1$ again.
More precisely we have $Q=\langle\lambda\rangle\times\langle v^{2^{n-2}},x\rangle$. Equation~\eqref{a2v} shows that $v^{2^{n-3}}$ still lies in the center of $\Phi(P)$. It follows easily that $\N_P(Q)=Q\langle v^{2^{n-3}}\rangle$. However, as above we also have $\mu\in\N_P(Q)$. Contradiction.

Finally, the case $Q\cong C_{2^t}\mathop{\ast}Q_8$ cannot occur, since $\Z(P)$ is noncyclic.

\textbf{Case~2: }$\Z(P)$ cyclic.\\
Here we have $a^{2^m}=uv^{2^{n-2}}z^{\eta}$, $n\ge m+2\ge 4$ and $1+s\not\equiv 0\pmod{2^{n-3}}$. Again we begin with $Q\cong C_{2^t}\times C_2^2$ for some $t\ge 1$. By Theorem~4.3(b) in \cite{Janko} we still have $\langle u,z\rangle=\Omega(\Z(\Phi(P)))$. Since $\Phi(P)$ does not have maximal class, also $\langle u,z\rangle=\Omega(\Phi(P))$ holds. In particular $\Omega(\Phi(P))\subseteq Q$. In case $t=1$ we see that $P/\Omega(\Phi(P))$ has maximal class which leads to a contradiction as before. Thus, $t\ge 2$. Since $u\in\Z(\Phi(P))$, Equation~\eqref{a2v} is still true. 
Hence, $\Omega_2(\Phi(P))=\langle a^{2^{m-1}}v^{2^{n-3}},v^{2^{n-2}}\rangle\cong C_4^2$.
%Hence, $\Phi(P)'\subseteq\langle v^8\rangle$. This shows that $\overline{\Phi(P)}:=\Phi(P)/\Omega(\Phi(P))$ does not have maximal class. Hence, $\Omega(\overline{\Phi(P)})\cong C_2^2$ (see Exercise~1.85 in \cite{Berkovich1}) and $\lvert\Omega_2(\Phi(P))\rvert=16$. Since $\langle u,v^{2^{n-2}}\rangle\subseteq\Z(\Phi(P))\cap\Omega_2(\Phi(P))\subseteq\Z(\Omega_2(\Phi(P)))$, it follows that $\Omega_2(\Phi(P))\cong C_4^2$. 
We choose an involution $\beta=xv^ja^{2i}\in Q\setminus\Phi(P)$. Then as usual $v^{2^{n-2}}\in\N_P(Q)\setminus Q$. Since $a^{2^m}\in\langle u\rangle\times\langle v^{2^{n-2}}\rangle$, we find an element $\delta=a^{2^{m-1}}v^{d_1}\in Q\cap \Omega_2(\Phi(P))$ of order $4$ fixed by $\beta$. Now exactly the same argument as in Case~1 shows that $\beta=x$ after changing the representative of $\beta$ and conjugation of $Q$ if necessary. Similarly we get $\lambda:=a^4v^j\in\C_{\Phi(P)}(x)$. Moreover, $u=a^{2^m}v^{-2^{n-2}}z^{\eta}\in\{\lambda^{2^{m-2}},\lambda^{2^{m-2}}z\}$. Therefore, $\C_{\Phi(P)}(x)=\langle\lambda\rangle\times\langle z\rangle\cong C_{2^{m-1}}\times C_2$. The contradiction follows as before.

Now assume that $Q\cong C_{2^t}\times Q_8$ or $Q\cong C_{2^{t+1}}\mathop{\ast} Q_8$ for some $t\ge 1$.
Proposition~\ref{centerfree} shows that $\mathcal{F}=\C_{\mathcal{F}}(\langle z\rangle)$. Theorem~6.3 in \cite{Linckelmann} implies that $\overline{Q}:=Q/\langle z\rangle$
%\cong C_{2^t}\times C_2^2$ 
is an $\mathcal{F}/\langle z\rangle$-essential subgroup of $\overline{P}:=P/\langle z\rangle$. Now $\overline{P}$ is bicyclic and has commutator subgroup isomorphic to $C_{2^{n-1}}\times C_2$. Hence the result follows by induction on $t$.
\end{proof}

Combining these propositions we deduce one of the main results of this paper. 

\begin{Theorem}\label{cycliccom}
Every fusion system on a bicyclic $2$-group $P$ is nilpotent unless $P'$ is cyclic.
\end{Theorem}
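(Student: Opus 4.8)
The plan is to prove the statement in the equivalent form: assuming $P'$ is noncyclic, every fusion system $\mathcal{F}$ on $P$ is nilpotent. The first reduction is the observation that a metacyclic $2$-group always has cyclic commutator subgroup (its cyclic normal subgroup already contains $P'$), so the hypothesis that $P'$ is noncyclic forces $P$ to be nonmetacyclic; this is exactly what makes Janko's classification \cite{Janko} available. Suppose toward a contradiction that $\mathcal{F}$ is nonnilpotent. By Alperin's Fusion Theorem a nonnilpotent system must either have a nontrivial $\Out_{\mathcal{F}}(P)$ or admit an $\mathcal{F}$-essential subgroup. The first alternative is excluded because $\Inn(P)=\Aut_P(P)$ is a Sylow $2$-subgroup of $\Aut_{\mathcal{F}}(P)$, so a nontrivial $\Out_{\mathcal{F}}(P)$ would make $\Aut(P)$ fail to be a $2$-group, and then Proposition~\ref{biaut} would force $P$ to be homocyclic or $Q_8$, hence metacyclic, against the reduction. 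So there exists an $\mathcal{F}$-essential $Q\le P$.

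Next I would pin down the rank of $Q$. A cyclic $2$-group has a $2$-group automorphism group, so $Q$ cannot have rank $1$; and since every section of a bicyclic $2$-group has rank at most $3$, $Q$ has rank $2$ or $3$. If $Q$ has rank $2$, Proposition~\ref{rank2ess} gives $P\cong C_{2^m}\wr C_2$, whose commutator subgroup is cyclic --- a contradiction. Hence $Q$ has rank $3$, and Proposition~\ref{essnormal} shows that a \emph{normal} rank-$3$ essential subgroup again forces $P'$ cyclic; therefore $Q\nunlhd P$, and by Proposition~\ref{biess} the group $Q$ is one of $C_{2^m}\times C_2^2$, $C_{2^m}\times Q_8$ or $C_{2^m}\mathop{\ast}Q_8$, with normalizer as described in Proposition~\ref{essnotnormal}.

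The bulk of the work is then a case analysis driven by the isomorphism type of $P'$. For nonmetacyclic bicyclic $P$ the group $P'$ is abelian of rank at most $2$, so $P'\cong C_{2^a}\times C_{2^b}$ with $1\le a\le b$, and I would organize the cases exactly as the preceding propositions do. The degenerate case $a=b=1$ is disposed of directly: here $P$ has no elementary abelian subgroup of order $8$ by Proposition~\ref{E8normal}, is forced to order $32$ by Janko, and is checked by hand. Otherwise $\Phi(P')\ne 1$, and one splits according to whether $P/\Phi(P')$ contains a normal $C_2^3$; the case without such a subgroup is handled uniformly by the first proposition of this subsection. When $P/\Phi(P')$ does contain a normal $C_2^3$, one splits further by whether $P'$ has a cyclic maximal subgroup, which for a noncyclic $P'\cong C_{2^a}\times C_{2^b}$ happens precisely when $a=1$: that case is the final proposition, while for $a\ge 2$ the subcases $b=a$, $b\in\{a+1,a+2\}$ and $b\ge a+3$ are exactly Proposition~\ref{nonhomo2}, Proposition~\ref{nonhomo}, and the proposition on $P'\cong C_{2^r}\times C_{2^{r+s+1}}$, respectively. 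Since these families visibly exhaust all $1\le a\le b$, each of the three candidate types for $Q$ is shown to be excludable in every case, yielding the contradiction.

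I expect the genuinely hard part not to be this final assembly, which is bookkeeping, but the individual structural eliminations inside Janko's presentations carried out in the propositions just cited. There one must compute, within the explicit generators and relations of each family, the precise structure of $\Phi(P)$ and of its $\Omega_i$-layers --- for which Lemma~\ref{metanormal} is the key tool --- then locate where an involution or an element of order $4$ that a candidate $Q$ would have to supply is forced to lie, and derive a numerical contradiction from the squaring relations together with the normalizer constraints of Proposition~\ref{essnotnormal}. Equally delicate is checking that the listed $P'$-families genuinely cover every noncyclic possibility and that the exceptional small-order and $\Phi(P')=1$ configurations are correctly handled; getting this bookkeeping exactly right is the main obstacle.
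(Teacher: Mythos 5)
Your proposal is correct and takes essentially the same route as the paper: the paper's proof of Theorem~\ref{cycliccom} is precisely the assembly of the propositions of that subsection (the $C_2^2$ case by computer, the no-normal-$C_2^3$ case, Propositions~\ref{nonhomo} and \ref{nonhomo2}, the $C_{2^r}\times C_{2^{r+s+1}}$ case, and the $C_{2^n}\times C_2$ case), applied after the reduction via Propositions~\ref{essnormal} and \ref{essnotnormal} to non-normal essential subgroups. Your additional glue --- the metacyclic reduction, the exclusion of $\Out_{\mathcal{F}}(P)\ne 1$ via Proposition~\ref{biaut}, the rank argument through Propositions~\ref{rank2ess} and \ref{biess}, and the explicit check that the $P'$-families exhaust all noncyclic $C_{2^a}\times C_{2^b}$ --- only makes explicit what the paper leaves implicit in the sentence ``Combining these proposition we deduce \dots''.
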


It seems that there is no general reason for Theorem~\ref{cycliccom}. For example there are nonnilpotent fusion systems on $2$-groups of rank $2$ with noncyclic commutator subgroup.

For the convenience of the reader we state a consequence for finite groups.

\begin{Corollary}
Let $G$ be a finite group with bicyclic Sylow $2$-subgroup $P$. If $P'$ is noncyclic, then $P$ has a normal complement in $G$.
\end{Corollary}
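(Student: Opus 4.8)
The plan is to pass from the group $G$ to its fusion system on $P$ and then invoke Theorem~\ref{cycliccom}. First I would form the saturated fusion system $\mathcal{F}:=\mathcal{F}_P(G)$ determined by $G$ and its Sylow $2$-subgroup $P$, i.\,e. the system whose morphisms are the conjugation maps between subgroups of $P$ induced by elements of $G$. By hypothesis $P$ is bicyclic with noncyclic commutator subgroup $P'$, so Theorem~\ref{cycliccom} applies verbatim and shows that $\mathcal{F}$ is nilpotent.

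It then remains to convert the nilpotency of $\mathcal{F}_P(G)$ into the existence of a normal $2$-complement. For this I would use the standard dictionary between fusion systems and local group structure: for a finite group $G$ with Sylow $p$-subgroup $P$, the fusion system $\mathcal{F}_P(G)$ is nilpotent if and only if $G$ is $p$-nilpotent (this is the fusion-theoretic formulation of Frobenius's normal $p$-complement theorem; see \cite{Linckelmann}). Taking $p=2$, the nilpotency of $\mathcal{F}$ forces $G$ to be $2$-nilpotent. Hence $G$ has a normal subgroup $N:=\pcore_{2'}(G)$ of odd order with $G=N\rtimes P$, and this $N$ is precisely a normal complement to $P$ in $G$, since $N\unlhd G$, $N\cap P=1$ and $NP=G$.

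I do not expect any genuine obstacle here: the corollary is an immediate consequence of the main theorem together with the well-known equivalence between nilpotency of $\mathcal{F}_P(G)$ and $p$-nilpotency of $G$. The only point deserving a little care is to state this equivalence correctly, so that the conclusion truly recovers the classical Frobenius criterion for the prime $2$ and delivers the asserted normal complement.
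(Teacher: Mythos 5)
Your proof is correct and is exactly the argument the paper intends: the Corollary is stated without proof as an immediate consequence of Theorem~\ref{cycliccom}, the point being that nilpotency of $\mathcal{F}_P(G)$ is equivalent to $2$-nilpotency of $G$ by the fusion-theoretic form of Frobenius's normal $p$-complement theorem. Your write-up simply makes this implicit step explicit, so there is nothing to add.
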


\subsection{The case $P'$ cyclic}

In this section we consider the remaining case where the bicyclic $2$-group $P$ has cyclic commutator subgroup. Here Theorem~4.4 in \cite{Janko} plays an important role. The following theorem classifies all fusion systems on bicyclic $2$-groups together with some more information.

\begin{Theorem}\label{main}
Let $\mathcal{F}$ be a fusion system on a bicyclic $2$-group $P$. Then one of the following holds:
\begin{enumerate}[(1)]
\item\label{nilcase} $\mathcal{F}$ is nilpotent, i.\,e. $\mathcal{F}=\mathcal{F}_P(P)$.
\item $P\cong C_{2^n}^2$ and $\mathcal{F}=\mathcal{F}_P(P\rtimes C_3)$ for some $n\ge 1$.
\item $P\cong D_{2^n}$ for some $n\ge 3$ and $\mathcal{F}=\mathcal{F}_P(\PGL(2,5^{2^{n-3}}))$ or $\mathcal{F}=\mathcal{F}_P(\PSL(2,5^{2^{n-2}}))$. % where $q$ is a suitable odd prime power. % and in the latter case $q\equiv\pm1\pmod{8}$.
Moreover, $\mathcal{F}$ provides one respectively two essential subgroups isomorphic to $C_2^2$ up to conjugation. 
\item $P\cong Q_8$ and $\mathcal{F}=\mathcal{F}_P(\SL(2,3))$ is controlled, i.\,e. there are no $\mathcal{F}$-essential subgroups.
\item $P\cong Q_{2^n}$ for some $n\ge 4$ and $\mathcal{F}=\mathcal{F}_P(\SL(2,5^{2^{n-4}}).C_2)$ or $\mathcal{F}=\mathcal{F}_P(\SL(2,5^{2^{n-3}}))$. Moreover, $\mathcal{F}$ provides one respectively two essential subgroups isomorphic to $Q_8$ up to conjugation. 
\item $P\cong SD_{2^n}$ for some $n\ge 4$ and $\mathcal{F}=\mathcal{F}_P(\PSL(2,5^{2^{n-3}})\rtimes C_2)$, $\mathcal{F}=\mathcal{F}_P(\GL(2,q))$ or $\mathcal{F}=\mathcal{F}_P(\PSL(3,q))$ where in the last two cases $q$ is a suitable prime power such that $q\equiv 3\pmod{4}$. Moreover, in the first (resp. second) case $C_2^2$ (resp. $Q_8$) is the only $\mathcal{F}$-essential subgroup up to conjugation, in the last case both are $\mathcal{F}$-essential and these are the only ones up to conjugation.
\item $P\cong C_{2^n}\wr C_2$ for some $n\ge 2$ and $\mathcal{F}=\mathcal{F}_P(C_{2^n}^2\rtimes S_3)$, $\mathcal{F}=\mathcal{F}_P(\GL(2,q))$ or $\mathcal{F}=\mathcal{F}_P(\PSL(3,q))$ where in the last two cases $q\equiv 1\pmod{4}$. Moreover, in the first (resp. second) case $C_{2^n}^2$ (resp. $C_{2^n}\mathop{\ast}Q_8$) is the only $\mathcal{F}$-essential subgroup up to conjugation, in the last case both are $\mathcal{F}$-essential and these are the only ones up to conjugation.
\item $P\cong C_2^2\rtimes C_{2^n}$ is minimal nonabelian of type $(n,1)$ for some $n\ge 2$ and $\mathcal{F}=\mathcal{F}_P(A_4\rtimes C_{2^n})$.
% where $C_{2^n}$ acts as a transposition in $\Aut(A_4)\cong S_4$. 
Moreover, $C_{2^{n-1}}\times C_2^2$ is the only $\mathcal{F}$-essential subgroup of $P$.
\item\label{dnonsp} $P\cong\langle v,x,a\mid v^{2^n}=x^2=1,\ ^xv=v^{-1},\ a^{2^m}=v^{2^{n-1}},\ ^av=v^{-1+2^{n-m+1}},\ ^ax=vx\rangle\cong D_{2^{n+1}}.C_{2^m}$ for $n>m>1$ and $\mathcal{F}=\mathcal{F}_P(\PSL(2,5^{2^{n-1}}).C_{2^m})$. Moreover, $C_{2^{m-1}}\times C_2^2$ is the only $\mathcal{F}$-essential subgroup up to conjugation.
\item\label{dsp} $P\cong \langle v,x,a\mid v^{2^n}=x^2=a^{2^m}=1,\ ^xv=v^{-1},\ ^av=v^{-1+2^i},\ ^ax=vx\rangle\cong D_{2^{n+1}}\rtimes C_{2^m}$ for $\max(2,n-m+2)\le i\le n$ and $n,m\ge 2$. Moreover, $\mathcal{F}=\mathcal{F}_P(\PSL(2,5^{2^{n-1}})\rtimes C_{2^m})$ and $C_{2^{m-1}}\times C_2^2$ is the only $\mathcal{F}$-essential subgroup up to conjugation. In case $i=n$ there are two possibilities for $\mathcal{F}$ which differ by $\Z(\mathcal{F})\in\{\langle a^2\rangle,\langle a^2v^{2^{n-1}}\rangle\}$.
\item\label{q8} $P\cong\langle v,x,a\mid v^{2^n}=1,\ x^2=a^{2^m}=v^{2^{n-1}},\ ^xv=v^{-1},\ ^av=v^{-1+2^{n-m+1}},\ ^ax=vx\rangle\cong Q_{2^{n+1}}.C_{2^m}$ for $n>m>1$ and $\mathcal{F}=\mathcal{F}_P(\SL(2,5^{2^{n-2}}).C_{2^m})$. Moreover, $C_{2^{m-1}}\times Q_8$ is the only $\mathcal{F}$-essential subgroup up to conjugation.
\item $P\cong \langle v,x,a\mid v^{2^n}=a^{2^m}=1,\ x^2=v^{2^{n-1}},\ ^xv=v^{-1},\ ^av=v^{-1+2^i},\ ^ax=vx\rangle\cong Q_{2^{n+1}}\rtimes C_{2^m}$ for $\max(2,n-m+2)\le i\le n$ and $n,m\ge 2$. Moreover, $\mathcal{F}=\mathcal{F}_P(\SL(2,5^{2^{n-2}})\rtimes C_{2^m})$ and $C_{2^{m-1}}\times Q_8$ is the only $\mathcal{F}$-essential subgroup up to conjugation.
\item $P\cong \langle v,x,a\mid v^{2^n}=a^{2^m}=1,\ x^2=v^{2^{n-1}},\ ^xv=v^{-1},\ ^av=v^{-1+2^{n-m+1}},\ ^ax=vx\rangle\cong Q_{2^{n+1}}\rtimes C_{2^m}$ for $n>m>1$ and $\mathcal{F}=\mathcal{F}_P(\SL(2,5^{2^{n-2}})\rtimes C_{2^m})$. Moreover, $C_{2^m}\mathop{\ast}Q_8$ is the only $\mathcal{F}$-essential subgroup up to conjugation.
\item\label{last} $P\cong\langle v,x,a\mid v^{2^n}=1,\ x^2=a^{2^m}=v^{2^{n-1}},\ ^xv=v^{-1},\ ^av=v^{-1+2^i},\ ^ax=vx\rangle\cong Q_{2^{n+1}}.C_{2^m}$ for $\max(2,n-m+2)\le i\le n$ and $n,m\ge 2$. In case $m=n$, we have $i\ne n$. Moreover, $\mathcal{F}=\mathcal{F}_P(\SL(2,5^{2^{n-2}}).C_{2^m})$ and $C_{2^m}\mathop{\ast}Q_8$ is the only $\mathcal{F}$-essential subgroup up to conjugation.
\end{enumerate}
In particular $\mathcal{F}$ is nonexotic. Conversely, for every group described in these cases there exists a fusion system with the given properties. Moreover, different parameters give nonisomorphic groups.
\end{Theorem}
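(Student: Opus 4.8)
The three closing assertions — nonexoticity, realizability, and the isomorphism statement — are what I would prove, in that order, with the first two forming a single problem. Since each of the cases \eqref{nilcase}--\eqref{last} already displays $\mathcal{F}$ in the form $\mathcal{F}=\mathcal{F}_P(G)$ for an explicit finite group $G$, nonexoticity is automatic once realizability is established, so the plan is to concentrate on the latter. Moreover I would not recompute the essential subgroups of $\mathcal{F}_P(G)$ from scratch: the structure of any $\mathcal{F}$-essential $Q$ and of $\N_P(Q)$ is already pinned down by Propositions~\ref{biess}, \ref{essnormal} and \ref{essnotnormal}, so the whole forward direction of Theorem~\ref{main} applies to $\mathcal{F}_P(G)$ as soon as it is nonnilpotent. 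Thus for each case it suffices to exhibit a $G$ with (a) a Sylow $2$-subgroup isomorphic to the stated $P$, and (b) $\mathcal{F}_P(G)$ nonnilpotent; the already-proved classification then forces $\mathcal{F}_P(G)$ to be the listed system (distinguishing the finitely many systems on a given $P$ by $\Z(\mathcal{F})$ where necessary, as in case~\eqref{dsp}). Point (b) is cheap: $G_0:=\SL(2,q)$ or $\PSL(2,q)$ is never $2$-nilpotent and $G_0\le G$, and $2$-nilpotency is inherited by subgroups, so $G$ is not $2$-nilpotent.

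The base cases need no new work. For homocyclic, wreath-product and minimal-nonabelian $P$ the realizing groups and their fusion were obtained in Propositions~\ref{rank2ess} and \ref{E8normal} and in \cite{CravenGlesser}; for $P$ dihedral, semidihedral or quaternion the systems of $\PSL(2,q)$, $\PGL(2,q)$, $\SL(2,q)$ and their index-two overgroups are the classical list of groups with such Sylow $2$-subgroups, recorded in \cite{CravenGlesser}. The genuinely new families are \eqref{dnonsp}--\eqref{last}. Here I would put $G_0=\PSL(2,5^{2^{n-1}})$ (respectively $\SL(2,5^{2^{n-2}})$), using the lifting-the-exponent identities $v_2(5^{2^{k}}-1)=k+2$ and $v_2(5^{2^{k}}+1)=1$, which give $v_2(q^2-1)=n+2$ for $q=5^{2^{n-1}}$; hence the Sylow $2$-subgroup of $G_0$ is $D_{2^{n+1}}$ (respectively $Q_{2^{n+1}}$), of exactly the required order, and $\langle v\rangle=P'\cong C_{2^n}$ is the $2$-part of a torus. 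One then forms $G=G_0.C_{2^m}$ by adjoining a $C_{2^m}$ of outer automorphisms drawn from $\Out(G_0)\le\PGammaL(2,q)$, so that $|P|=2^{n+m+1}$ as required.

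The heart of the realization — and the step I expect to be the main obstacle — is matching Janko's parameters with the chosen embedding $C_{2^m}\hookrightarrow\Out(G_0)\cong C_2\times\operatorname{Gal}(\mathbb{F}_q/\mathbb{F}_5)$. A generator $a$ of the extension acts on $\langle v\rangle$ by some $v\mapsto v^{c}$; a pure Frobenius power forces $c\equiv1\pmod4$, whereas $^{a}v=v^{-1+2^{i}}$ requires $c\equiv3\pmod4$, so $a$ must be taken as a field automorphism composed with the core reflection (and, to move $i$ across its full admissible range $\max(2,n-m+2)\le i\le n$, with a diagonal automorphism as well), with $i$ read off as $i=v_2(c+1)$. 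The split/nonsplit dichotomy is then decided by whether the chosen lift of $a$ has order $2^{m}$ or squares into $\langle v^{2^{n-1}}\rangle=\Z(D_{2^{n+1}})$. Getting every permissible $(n,m,i)$ and both extension types out of a single well-chosen automorphism of $G_0$, and then identifying the resulting metacyclic subgroup $\langle v,a\rangle$ with Janko's normal form via the classification of metacyclic $2$-groups, is the delicate part; the odd multiplier appearing in $c=-1+2^{i}w$ is harmless because, as the next paragraph shows, only $i=v_2(c+1)$ survives as an invariant.

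For the isomorphism statement I would separate the groups by invariants, coarse ones first. Maximal class occurs exactly for $m=1$; $\Z(P)$ is cyclic precisely when the essential subgroup is a central product $C_{2^{m}}\mathop{\ast}Q_8$, which isolates those cases from the direct-product ones; and the dihedral-type families are told apart from the quaternion-type families by the number of involutions in the characteristic core $\langle v,x\rangle$. Since $P'\cong C_{2^n}$ is characteristic of order $2^{n}$, its order fixes $n$, and then $|P|=2^{n+m+1}$ fixes $m$. The one subtle parameter is $i$ in the split cases, and this is recovered as a genuine automorphism-invariant: for any $a$ inducing a generator of $P/\langle v,x\rangle$, the identity $(-1+2^{i})^{2}-1=2^{i+1}(2^{i-1}-1)$ shows that the action of $a^{2}$ on $P'$ has $\lvert\C_{P'}(a^{2})\rvert=2^{\min(i+1,n)}$, a quantity independent of the chosen generator of $P'$, of the lift $a$, and of the odd multiplier $w$; hence it recovers $i$. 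Assembling these invariants proves that distinct parameter tuples give nonisomorphic groups, and this bookkeeping simultaneously rules out the case $\N_P(Q)\cong SD_{16}\times C_{2^m}$ of Proposition~\ref{essnotnormal}, since no such $P$ appears among the realized groups.
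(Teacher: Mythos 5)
Your proposal has a genuine gap at its core, and it is the step you dismiss as already done. You write that "the whole forward direction of Theorem~\ref{main} applies to $\mathcal{F}_P(G)$ as soon as it is nonnilpotent" and that "the already-proved classification then forces $\mathcal{F}_P(G)$ to be the listed system." But Propositions~\ref{biess}, \ref{essnormal} and \ref{essnotnormal} only restrict the \emph{isomorphism types} of an essential subgroup $Q$ and of $\N_P(Q)$; they do not classify fusion systems on $P$, and no such classification exists prior to Theorem~\ref{main} itself -- that classification \emph{is} the theorem. In particular your plan never establishes: (a) that on a given $P$ there is only one conjugacy class of $\mathcal{F}$-essential subgroups (the paper must explicitly rule out coexisting essentials of types $C_{2^s}\times Q_8$ and $C_{2^s}\mathop{\ast}Q_8$, etc.); (b) that the action of $\Aut_{\mathcal{F}}(Q)$ on $Q$ is unique up to $\Aut(P)$, which is where most of the paper's work lies (the normalizations $x\mapsto x\lambda^{2^{m-2}}$, $a\mapsto av^{2^{n-2}}$, $v\mapsto vz$, $a\mapsto a^{1+2^{n-i}}$, followed by Alperin's Fusion Theorem); (c) the parameter constraints themselves, e.g.\ that a nonsplit extension forces $i=n-m+1$, which the paper extracts from the relation $\lambda^{2^{m-1}}=v^{2^{i+m-2}\nu}a^{2^m}$ inside the fusion analysis, not from the construction of $G$; and (d) the count of systems on a fixed $P$, notably that $i=n$ in case~\eqref{dsp} yields \emph{exactly two} systems, which requires the square/nonsquare analysis of $\lambda=za^2$ in $P$ -- invoking "$\Z(\mathcal{F})$ where necessary" presupposes exactly this. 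Exhibiting one nonnilpotent $\mathcal{F}_P(G)$ per case, which is what your plan actually delivers, proves realizability but not that an arbitrary nonnilpotent $\mathcal{F}$ on $P$ equals it; the argument as structured is circular.

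The isomorphism part also has two concrete defects. First, your invariant $\lvert\C_{P'}(a^2)\rvert=2^{\min(i+1,n)}$ takes the same value for $i=n-1$ and $i=n$, so it does not recover $i$ on its full range; the paper needs a separate argument here (for $i=n$ the maximal subgroup $\langle v,ax\rangle$ is abelian, for $i=n-1$ both metacyclic maximal subgroups are nonabelian). Second, $\langle v,x\rangle$ is not characteristic, so "the number of involutions in the characteristic core" is not well defined: the same abstract group admits both a dihedral-type and a quaternion-type presentation (set $\widetilde{x}:=xa^{2^{m-1}}$; then $\widetilde{x}^2=z$ even when $x^2=1$, which is precisely the content of the theorem following Theorem~\ref{main} in the paper). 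This is why the parameter restrictions in the statement are essential and why the paper distinguishes the dihedral-type from the quaternion-type groups not by an internal subgroup count but by which essential subgroups their nonnilpotent fusion systems admit -- an argument that again depends on the uniqueness analysis you skipped.
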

\begin{proof}
Assume that $\mathcal{F}$ is nonnilpotent. By Theorem~\ref{cycliccom}, $P'$ is cyclic.
The case $P\cong Q_8$ is easy. For the other metacyclic cases and the case $P\cong C_{2^n}\wr C_2$ we refer to Theorem~5.3 in \cite{CravenGlesser}. 
Here we add a few additional information. An induction on $i\ge 2$ shows that $5^{2^{i-2}}\equiv 1+2^i\pmod{2^{i+1}}$. This implies that the Sylow $2$-subgroups of $\SL(2,5^{2^{n-3}})$, $\PSL(2,5^{2^{n-2}})$ and so on have the right order. For the groups $SD_{2^n}$ and $C_{2^n}\wr C_2$ it is a priori not clear if for every $n$ an odd prime power $q$ can be found. However, this can be shown using Dirichlet's Prime Number Theorem (compare with Theorem~6.2 in \cite{pnilpotentforcing}). Hence, for a given $n$ all these fusion systems can be constructed.

Using Proposition~\ref{rank2ess} we can assume that every $\mathcal{F}$-essential subgroup has rank $3$. Finally by Proposition~\ref{E8normal} it remains to consider $|P'|>2$. Hence, let $P$ be as in Theorem~4.4 in \cite{Janko}. 
We adapt our notation slightly as follows. We replace $a$ by $a^{-1}$ in order to write $^av$ instead of $v^a$. Then we have $^ax=vx$. After replacing $v$ by a suitable power, we may assume that $i$ is a $2$-power (accordingly we need to change $x$ to $v^\eta x$ for a suitable number $\eta$). Then we can also replace $i$ by $2+\log i$. This gives
\begin{equation}\label{pres}
P\cong\langle v,x,a\mid v^{2^n}=1,\ x^2,a^{2^m}\in\langle v^{2^{n-1}}\rangle,\ ^xv=v^{-1},\ ^av=v^{-1+2^i},\ ^ax=vx\rangle.
\end{equation}
Since Theorem~4.4 in \cite{Janko} also states that $v$ and $a^{2^{m-1}}$ commute, we obtain $i\in\{\max(n-m+1,2),\ldots,n\}$. We set $z:=v^{2^{n-1}}$ as in \cite{Janko}. Moreover, let $\lambda:=v^{-2^{i-1}}a^2$. Then
\[x\lambda x^{-1}=v^{2^{i-1}}(v^{-1}a)^2=v^{-2^{i-1}}a^2=\lambda\]
and $\lambda\in\C_{\Phi(P)}(x)$. Assume that also $v^ja^2\in\C_{\Phi(P)}(x)$. Then we get $v^ja^2\in\{\lambda,\lambda z\}$. Hence, $\C_{\Phi(P)}(x)\in\{\langle\lambda\rangle, \langle\lambda\rangle\times\langle z\rangle\}$.
It should be pointed out that it was not shown in \cite{Janko} that these presentations really give groups of order $2^{n+m+1}$ (although some evidence by computer results is stated). However, we assume in the first part of the proof that these groups with the “right” order exist. Later we construct $\mathcal{F}$ as a fusion of a finite group and it will be clear that $P$ shows up as a Sylow $2$-subgroup of order $2^{n+m+1}$.
Now we distinguish between the three different types of essential subgroups.

\textbf{Case~1:} $Q\cong C_{2^t}\times C_2^2$ is $\mathcal{F}$-essential in $P$ for some $t\ge 1$.\\
As usual $Q\le M=E\langle a^2\rangle$. Since $Q\cap E$ is abelian and $Q/Q\cap E\cong QE/E\le P/E$ is cyclic, it follows that $E$ is dihedral and $Q\cap E\cong C_2^2$. After conjugation of $Q$ we may assume $Q\cap E\in\{\langle z,x\rangle,\langle z,vx\rangle\}$. Further conjugation with $a$ gives $Q\cap E=\langle z,x\rangle$. 
Since $\C_Q(x)\cap\Phi(P)$ is noncyclic, it follows that $\C_{\Phi(P)}(x)=\langle\lambda\rangle\times\langle z\rangle\cong C_{2^{m-1}}\times C_2$. As usual we obtain $Q=\langle\lambda,z,x\rangle$ and $t=m-1$. Moreover, $a^2va^{-2}\equiv v\pmod{\langle v^8\rangle}$. Hence, $\N_P(Q)=\langle\lambda,v^{2^{n-2}},x\rangle$.  

We prove that $Q$ is the only $\mathcal{F}$-essential subgroup of $P$ up to conjugation. 
If there is an $\mathcal{F}$-essential subgroup of rank $2$, then Proposition~\ref{rank2ess} implies that $P$ is a wreath product. However, by the proof of Theorem~5.3 in \cite{CravenGlesser} all the other $\mathcal{F}$-essential subgroups are of type $C_{2^r}\mathop{\ast}Q_8$. Hence, this case cannot occur. Thus, by construction it is clear that $Q$ is the only abelian $\mathcal{F}$-essential subgroup up to conjugation. Now assume $Q_1\cong C_{2^s}\times Q_8$ is also $\mathcal{F}$-essential. Since $Q_1$ has three involutions, $Q_1\cap E$ is cyclic or isomorphic to $C_2^2$. In either case $Q/Q\cap Q\cong QE/E\le P/E$ cannot be cyclic. Contradiction. Suppose now that $Q_1\cong C_{2^s}\mathop{\ast}Q_8\cong C_{2^s}\mathop{\ast}D_8$ for some $s\ge 2$. Then $Q_1\cap E$ cannot be cyclic, since $Q_1$ has rank $3$. Suppose $Q_1\cap E\cong C_2^2$. Then $\Omega(\Z(Q_1))\subseteq Q_1\cap E$ and $\exp Q_1/Q_1\cap E\le 2^{s-1}$. On the other hand $|Q_1/Q_1\cap E|=2^s$. In particular $Q_1/Q_1\cap E\cong Q_1E/E\le P/E$ cannot be cyclic. It follows that $Q_1\cap E$ must be a (nonabelian) dihedral group. Hence, $2^{s-1}|Q_1\cap E|=|(Q_1\cap E)\Z(Q_1)|\le|Q_1|=2^{s+2}$ and $Q_1\cap E\cong D_8$. After conjugation of $Q_1$ we have $Q_1\cap E=\langle v^{2^{n-2}},x\rangle$. Let $\lambda_1\in \Z(Q_1)\setminus E$ be an element of order $2^s$ such that $\lambda_1^{2^{s-1}}=z$. Since $x\in Q_1$, we have $\lambda_1^2\in\C_{\Phi(P)}(x)=\langle\lambda\rangle\times\langle z\rangle$. This implies $s=2$ and $\lambda_1\notin\Phi(P)$. Since $Q_1=(Q_1\cap\Phi(P))\langle x\rangle$, we obtain $\lambda_1x\in\C_{\Phi(P)}(x)$. But this contradicts $z=\lambda_1^2=(\lambda_1x)^2$. Hence, we have proved that $Q$ is in fact the only $\mathcal{F}$-essential subgroup of $P$ up to conjugation. 

Now we try to pin down the structure of $P$ more precisely. 
We show by induction on $j\ge 0$ that $\lambda^{2^j}=v^{2^{i+j-1}\nu}a^{2^{j+1}}$ for an odd number $\nu$. This is clear for $j=0$. For arbitrary $j\ge 1$ we have
\[\lambda^{2^j}=\lambda^{2^{j-1}}\lambda^{2^{j-1}}=v^{2^{i+j-2}\nu}a^{2^j}v^{2^{i+j-2}\nu}a^{2^j}=v^{2^{i+j-2}\nu(-1+2^i)^{2^j}+2^{i+j-2}\nu}a^{2^{j+1}}=v^{2^{i+j-2}\nu((-1+2^i)^{2^j}+1)}a^{2^{j+1}},\]
and the claim follows. In particular we obtain 
\begin{equation}\label{congi}
1=\lambda^{2^{m-1}}=v^{2^{i+m-2}\nu}a^{2^m}.
\end{equation}
We distinguish whether $P$ splits or not.

\textbf{Case~1(a):} $a^{2^m}=z$.\\
Here Equation~\eqref{congi} shows $i=n-m+1$. Then $n>m>1$, and the isomorphism type of $P$ is completely determined by $m$ and $n$. We show next that $\mathcal{F}$ is uniquely determined. For this we need to describe the action of $\Aut_{\mathcal{F}}(Q)$ in order to apply Alperin's Fusion Theorem. As in the proof of Proposition~\ref{E8normal}, $\Aut_{\mathcal{F}}(Q)$ acts on $\langle x,z\rangle$ or on $\langle x\lambda^{2^{m-2}},z\rangle$ nontrivially (recall $\N_P(Q)\cong D_8\times C_{2^{m-1}}$). Set $\widetilde{x}:=x\lambda^{2^{m-2}}$ and $\widetilde{a}:=av^{2^{n-2}}$. Then as above $\widetilde{x}=xv^{\pm2^{n-2}}a^{2^{m-1}}$. Hence, $\widetilde{x}^2=1$ and $^{\widetilde{x}}v=v^{-1}$. Moreover, $\widetilde{a}^2=a^2$ and thus $\widetilde{a}^{2^m}=z$. Finally, $^{\widetilde{a}}v={^av}$ and $^{\widetilde{a}}\widetilde{x}={^a(xzv^{\pm2^{n-2}}a^{2^{m-1}})}=vxzv^{\mp2^{n-2}}a^{2^{m-1}}=v\widetilde{x}$. Hence, $v$, $\widetilde{x}$ and $\widetilde{a}$ satisfy the same relations as $v$, $x$ and $a$. Obviously, $P=\langle v,\widetilde{x},\widetilde{a}\rangle$. Therefore, we may replace $x$ by $\widetilde{x}$ and $a$ by $\widetilde{a}$. After doing this if necessary, we see that $\Aut_{\mathcal{F}}(Q)$ acts nontrivially on $\langle x,z\rangle$ (observe that $Q$ remains fixed under this transformation). As usual it follows that $\C_Q(\Aut_{\mathcal{F}}(Q))\in\{\langle\lambda\rangle,\langle\lambda z\rangle\}$ (compare with proof of Proposition~\ref{E8normal}). Define $\widetilde{a}:=a^{1+2^{m-1}}$ and $\widetilde{v}:=v^{1+2^{n-1}}=vz$. Then $\widetilde{a}^2=a^2z$, $\widetilde{a}^{2^m}=z$, $\widetilde{v}^{2^n}=1$, $^x\widetilde{v}=\widetilde{v}^{-1}$ and $^{\widetilde{a}}\widetilde{v}=\widetilde{v}^{-1+2^{n-m+1}}$. Now we show by induction on $j\ge 1$ that $a^{2^j}xa^{-2^j}=v^{2^{n-m+j}\nu}x$ for an odd integer $\nu$. For $j=1$ we have $a^2xa^{-2}={^a(vx)}=v^{2^{n-m+1}}x$. For arbitrary $j\ge 1$ induction gives
\[a^{2^{j+1}}xa^{-2^{j+1}}=a^{2^j}(a^{2^j}xa^{-2^j})a^{-2^j}=a^{2^j}(v^{2^{n-m+j}\nu}x)a^{-2^j}=v^{2^{n-m+j}\nu((-1+2^{n-m+1})^{2^j}+1)}x,\]
and the claim follows. In particular $a^{2^{m-1}}xa^{-2^{m-1}}=zvx$ and $^{\widetilde{a}}x=\widetilde{v}x$. Obviously, $P=\langle\widetilde{v},\widetilde{a},x\rangle$. Hence, we may replace $v$, $a$, $x$ by $\widetilde{v}$, $\widetilde{a}$, $x$ if necessary. Under this transformation $Q$ and $\langle x,z\rangle$ remain fixed as sets and $\lambda$ goes to $\lambda z$. So, we may assume $\C_Q(\Aut_{\mathcal{F}}(Q))=\langle\lambda\rangle$. Then the action on $\Aut_{\mathcal{F}}(Q)$ on $Q$ is completely described. In particular $\mathcal{F}$ is uniquely determined.

It remains to prove that $P$ and $\mathcal{F}$ really exist. Let $q:=5^{2^{n-1}}$.
%Choose an odd prime $p$ such that $p^m\equiv\pm1\pmod{2^{n+1}}$.
It is not hard to verify that $H:=\PSL(2,q)$ has Sylow $2$-subgroup $E\cong D_{2^{n+1}}$. More precisely, $E$ can be generated by the following matrices
\begin{align*}
v&:=\begin{pmatrix}\omega&0\\0&\omega^{-1}\end{pmatrix},&x&:=\begin{pmatrix}
0&1\\-1&0\end{pmatrix}
\end{align*}
where $\omega\in\mathbb{F}_q^\times$ has order $2^{n+1}$. Moreover, we regard these matrices modulo $\Z(\SL(2,q))=\langle -1_2\rangle$. Now consider the matrix $a_1:=\bigl(\begin{smallmatrix}0&\omega\\-1&0\end{smallmatrix}\bigr)\in\GL(2,q)/\Z(\SL(2,q))$. Then $a_1$ acts on $H$ and a calculation shows $^{a_1}v=v^{-1}$ and $^{a_1}x=vx$. 
Let $\gamma_1$ be the Frobenius automorphism of $\mathbb{F}_q$ with respect to $\mathbb{F}_5$, i.\,e. $\gamma_1(\tau)=\tau^5$ for $\tau\in\mathbb{F}_q$. 
As usual we may regard $\gamma_1$ as an automorphism of $H$. Let $\gamma:=\gamma_1^{2^{n-m-1}}$ so that $|\langle\gamma\rangle|=2^m$. 
%We set $\gamma:=\gamma_1^{2^{n-m-1}}$. Then $|\langle\gamma\rangle|=2^m$.
Recall that $(\mathbb{Z}/2^{n+1}\mathbb{Z})^\times=\langle 5+2^{n+1}\mathbb{Z}\rangle\times\langle -1+2^{n+1}\mathbb{Z}\rangle\cong C_{2^{n-1}}\times C_2$. 
%In particular $(\mathbb{Z}/2^n\mathbb{Z})^\times$ contains exactly two subgroups isomorphic to $C_{2^m}$ (notice $n>m>1$). 
It is easy to show that $\langle 5^{2^{n-m-1}}+2^{n+1}\mathbb{Z}\rangle$ and $\langle 1-2^{n-m+1}+2^{n+1}\mathbb{Z}\rangle$ are subgroups of $(\mathbb{Z}/2^{n+1}\mathbb{Z})^\times$ of order $2^m$.
Since 
\[5^{2^{n-m-1}}\equiv 1-2^{n-m+1}\pmod{8},\]
it follows that 
\[\langle 5^{2^{n-m-1}}+2^{n+1}\mathbb{Z}\rangle=\langle 1-2^{n-m+1}+2^{n+1}\mathbb{Z}\rangle.\]
In particular we can find an odd integer $\nu$ such that $5^{2^{n-m-1}\nu}\equiv 1-2^{n-m+1}\pmod{2^{n+1}}$. Now we set 
\[a:=a_1\gamma^{\nu}.\]
Since $\gamma_1$ fixes $x$, we obtain $^av=v^{-1+2^{n-m+1}}$ and $^ax=vx$. It remains to show that $a^{2^m}=v^{2^{n-1}}=:z$. Here we identify elements of $H$ with the corresponding inner automorphisms in $\Inn(H)\cong H$.
%It is known that $a_1$ and $\gamma_1$ commute modulo $\Inn(H)\cong H$ (see \cite{Wilson}). Hence $a^{2^m}\in H$ by a slight abuse of notation. 
For an element $u\in H$ we have
\[
a^2(u)=(a_1\gamma^\nu a_1\gamma^\nu)(u)=(a_1\gamma^\nu(a_1))\gamma^{2\nu}(u)(a_1\gamma^\nu(a_1))^{-1}=\left(\begin{pmatrix}
\omega&0\\0&\omega^{5^{2^{n-m-1}\nu}}
\end{pmatrix}\gamma^{2\nu}\right)(u).\]
After multiplying the matrix in the last equation by $\bigl(\begin{smallmatrix}
\omega&0\\0&\omega
\end{smallmatrix}\bigr)^h\in\Z(\GL(2,q))$ for $h:=-(5^{2^{n-m-1}\nu}+1)/2$, we obtain
\[a^2(u)=\left(\begin{pmatrix}
\omega^{2^{n-m}}&0\\0&\omega^{-2^{n-m}}
\end{pmatrix}\gamma^{2\nu}\right)(u),\]
since $(1-5^{2^{n-m-1}\nu})/2\equiv 2^{n-m}\pmod{2^n}$.
Using induction and the same argument we get
\[a^{2^j}=\begin{pmatrix}
\omega^{h_j}&0\\0&\omega^{-h_j}
\end{pmatrix}\gamma^{2^j\nu}\]
where $2^{n-m+j-1}\mid h_j$ and $2^{n-m+j}\nmid h_j$ for $j\ge 1$. In particular $a^{2^m}=z$ as claimed. Now Theorem~15.3.1 in \cite{Hall} shows that the following nonsplit extension exists
\[G:=H\langle a\rangle\cong\PSL(2,5^{2^{n-1}}).C_{2^m}.\]
Moreover, the construction shows that $G$ has Sylow $2$-subgroup $P$. Since $H$ is nonabelian simple, $\mathcal{F}_P(G)$ is nonnilpotent. Hence, $\mathcal{F}=\mathcal{F}_P(G)$.

\textbf{Case~1(b):} $a^{2^m}=1$.\\
Here $P\cong D_{2^{n+1}}\rtimes C_{2^m}$. Moreover, by Equation~\eqref{congi} we have $n-m+2\le i$. 
As in Case~1(a) we may assume that $\Aut_{\mathcal{F}}(Q)$ acts on $\langle x,z\rangle$ using the following automorphism of $P$ if necessary:
% $\mathcal{F}$ is uniquely determined by using the following two automorphisms of $P$:
\[v\mapsto v,\ x\mapsto x\lambda^{2^{m-2}},\ a\mapsto av^{2^{n-2}}.\]
Now assume $i<n$ (and thus $m,n\ge 3$). Here we consider the following map
%If $i<n$ (and thus $m,n\ge 3$), then we can even get $\C_Q(\Aut_{\mathcal{F}}(Q))=\langle\lambda\rangle$ after applying the automorphism
\[v\mapsto v^{1+2^{n-1}}=vz=:\widetilde{v},\ x\mapsto x,\ a\mapsto a^{1+2^{n-i}}=:\widetilde{a}.\]
It can be seen that $\widetilde{v}$, $x$ and $\widetilde{a}$ generate $P$ and satisfy the same relations as $v$, $x$ and $a$. Moreover, as above we have $\lambda^{2^{n-i}}=za^{2^{n-i+1}}$. This shows
\[\lambda\mapsto \widetilde{v}^{-2^{i-1}}\widetilde{a}^2=v^{-2^{i-1}}a^{2+2^{n-i+1}}=\lambda^{1+2^{n-i}}z=(\lambda z)^{1+2^{n-i}}.\]
Hence, we obtain $\C_Q(\Aut_{\mathcal{F}}(Q))=\langle\lambda\rangle$ after applying this automorphism if necessary. This determines $\mathcal{F}$ completely, and we will construct $\mathcal{F}$ later.

We continue by looking at the case $i=n$.
Here we show that $\lambda=za^2$ is not a square in $P$. Assume the contrary, i.\,e. $za^2=(v^jx^ka^l)^2$ for some $j,k,l\in\mathbb{Z}$. Of course $l$ must be odd. In case $k=0$ we get the contradiction $(v^ja^l)^2=a^{2l}$. Thus, $k=1$. Then $[v,xa^l]=1$ and $(v^jxa^l)^2=v^{2j}(xaxa^{-1})a^{2l}=v^{2j-1}a^{2l}$. Again a contradiction. Hence, $\lambda$ is in fact a nonsquare. However, $\lambda z=a^2$ is a square and so is every power. As a consequence, it turns out that the two possibilities $\C_Q(\Aut_{\mathcal{F}}(Q))=\Z(\mathcal{F})=\langle\lambda\rangle$ or $\C_Q(\Aut_{\mathcal{F}}(Q))=\Z(\mathcal{F})=\langle a^2\rangle$ give in fact \emph{nonisomorphic} fusion systems (in the sense of Definition~\ref{equal}). We denote the latter possibility by $\mathcal{F}'$, i.\,e. $\Z(\mathcal{F}')=\langle a^2\rangle$. 

Now for every $i\in\{\max(2,n-m+2),\ldots,n\}$ we construct $P$ and $\mathcal{F}$. After that we explain how to obtain $\mathcal{F}'$ for $i=n$. This works quite similar as in Case~1(a). Let $q$, $H$, $v$, $x$, $a_1$ and $\gamma_1$ as there. It is easy to see that $\langle 1-2^i+2^{n+1}\mathbb{Z}\rangle$ has order $2^{n+1-i}$ as a subgroup of $(\mathbb{Z}/2^{n+1}\mathbb{Z})^\times$. Set $\gamma:=\gamma_1^{2^{i-2}}$. Then $\gamma^{2^m}=1$, since $m+i-2\ge n$. Again we can find an odd integer $\nu$ such that
$5^{2^{i-2}\nu}\equiv 1-2^i\pmod{2^{n+1}}$. Setting $a:=a_1\gamma^{\nu}\in\Aut(H)$ we get $^av=v^{-1+2^i}$ and $^ax=vx$. It remains to prove $a^{2^m}=1$. As above we obtain
\[a^2=\begin{pmatrix}
\omega^{2^{i-1}}&0\\0&\omega^{-2^{i-1}}
\end{pmatrix}\gamma^{2\nu}.\]
This leads to $a^{2^m}=1$. Now we can define $G:=H\rtimes\langle a\rangle$ (notice that the action of $\langle a\rangle$ on $H$ is usually not faithful). It is easy to see that in fact $P\in\Syl_2(G)$ and $\mathcal{F}_P(G)$ is nonnilpotent. Hence, for $i<n$ we get $\mathcal{F}=\mathcal{F}_P(G)$ immediately. Now assume $i=n$. Since $\omega^{2^n}=-1\in\mathbb{F}_q$, we can choose $\omega$ such that $\omega^{2^{n-1}}=2\in\mathbb{F}_5\subseteq\mathbb{F}_q$. Define
\[\alpha:=\begin{pmatrix}3&1\\2&1\end{pmatrix}\in H.\]
%(recall that $\omega^{2^n}=-1\in\mathbb{F}_q$ and $\operatorname{char}\mathbb{F}_q=5$). 
A calculation shows that $\alpha$ has order $3$ and acts on $\langle x,z\rangle$ nontrivially. Moreover, $\gamma^{2\nu}=1$ and $a^2$ is the inner automorphism induced by $z$. In particular $a^2$ does not fix $\alpha$. We can view $\alpha$ as an element of $\Aut_{\mathcal{F}_P(G)}(Q)$. Then $\C_Q(\Aut_{\mathcal{F}_P(G)}(Q))=\langle\lambda\rangle=\Z(\mathcal{F})$ is generated by a nonsquare in $P$. This shows again $\mathcal{F}=\mathcal{F}_P(G)$. It remains to construct $\mathcal{F}'$. Observe that $\gamma$ acts trivially on $\langle v,x\rangle$, since $5^{2^{n-2}}\equiv 1\pmod{2^n}$. Hence, we can replace the automorphism $a$ just by $a_1=\bigl(\begin{smallmatrix}0&\omega\\-1&0\end{smallmatrix}\bigr)$ without changing the isomorphism type of $P$. Again we define $G:=H\rtimes\langle a_1\rangle$.
Then it turns out that $a_1^2=\bigl(\begin{smallmatrix}\omega&0\\0&\omega\end{smallmatrix}\bigr)\in\Z(\GL(2,q))$. In particular $a_1^2$ is fixed by the element $\alpha\in\Aut_{\mathcal{F}_P(G)}(Q)$ above. So here $\Z(\mathcal{F})=\langle a_1^2\rangle$ is generated by a square in $P$. Thus, we obtain $\mathcal{F}'=\mathcal{F}_P(G)$. 

\textbf{Case~2:} $Q\cong C_{2^t}\times Q_8$ is $\mathcal{F}$-essential in $P$ for some $t\ge 1$.\\
We have seen above that $E$ cannot be dihedral. Hence, $E$ is (generalized) quaternion, i.\,e. $x^2=z$. 
Now $|Q:\Z(Q)|=4$ implies $Q\cap E\cong Q_8$. After conjugation of $Q$ we may assume $Q\cap E=\langle v^{2^{n-2}},x\rangle$. Proposition~\ref{centerfree} implies $z\in\Z(\mathcal{F})$. In particular $Q/\langle z\rangle\cong C_{2^t}\times C_2^2$ is an $\mathcal{F}/\langle z\rangle$-essential subgroup of $P/\langle z\rangle$ (see Theorem~6.3 in \cite{Linckelmann}). So by the first part of the proof and Proposition~\ref{E8normal} (for $n=2$) we get $t=m-1$, and $Q$ is the only $\mathcal{F}$-essential subgroup up to conjugation.
Since $\C_Q(x)\cap\Phi(P)$ is still noncyclic, we have $\C_{\Phi(P)}(x)=\langle\lambda\rangle\times\langle z\rangle\cong C_{2^{m-1}}\times C_2$ as in Case~1. Moreover, $a^2$ fixes $v^{2^{n-2}}$, and it follows that $Q=\langle v^{2^{n-2}},x,\lambda\rangle$.

Here we can handle the uniqueness of $\mathcal{F}$ uniformly without discussing the split and nonsplit case separately. 
Since $\Inn(Q)\cong C_2^2$, $\Aut_{\mathcal{F}}(Q)$ is a group of order $24$ which is generated by $\N_P(Q)/\Z(Q)$ and an automorphism $\alpha\in\Aut_{\mathcal{F}}(Q)$ of order $3$. %(observe however that the isomorphism type of $\N_P(Q)$ depends on $n$). 
Hence, in order to describe the action of $\Aut_{\mathcal{F}}(Q)$ on $Q$ (up to automorphisms from $\Aut(P)$), it suffices to know how $\alpha$ acts on $Q$. 
%Since $a^2v^{2^{n-3}}a^{-2}=v^{2^{n-3}(-1+2^{n-m+1})^2}=v^{2^{n-3}}$, we get $\N_P(Q)=\langle v^{2^{n-3}},x,\lambda\rangle\cong Q_{16}\times C_{2^{m-1}}$.
First of all, $\alpha$ acts on only one subgroup $Q_8\cong R\le Q$. It is not hard to see that $Q'=\langle z\rangle\subseteq R$ and thus $R\unlhd Q$. In particular $R$ is invariant under inner automorphisms of $Q$. Now let $\beta$ be an automorphism of $Q$ coming from $\N_P(Q)/Q\le\Out_{\mathcal{F}}(Q)$. Then $\beta\alpha\equiv\alpha^{-1}\beta\pmod{\Inn(Q)}$. In particular $\beta(R)=\alpha^{-1}(\beta(R))=R$. 
Looking at the action of $\N_P(Q)$, we see that $R\in\{\langle v^{2^{n-2}},x\rangle,\langle v^{2^{n-2}},x\lambda^{2^{m-2}}\rangle\}$. Again the automorphism
\[v\mapsto v,\ x\mapsto x\lambda^{2^{m-2}},\ a\mapsto av^{2^{n-2}}\]
leads to $R=\langle v^{2^{n-2}},x\rangle$. The action of $\alpha$ on $R$ is not quite unique. However, after inverting $\alpha$ if necessary, we have $\alpha(x)\in\{v^{2^{n-2}},v^{-2^{n-2}}\}$. If we conjugate $\alpha$ with the inner automorphism induced by $x$ in doubt, we end up with $\alpha(x)=v^{2^{n-2}}$. Since $\alpha$ has order $3$, it follows that $\alpha(v^{2^{n-2}})=xv^{2^{n-2}}$. So we know precisely how $\alpha$ acts on $R$. 
Since $\alpha$ is unique up to conjugation in $\Aut(Q)$, we have $\C_Q(\alpha)=\Z(Q)=\langle\lambda,z\rangle$. 
Hence, the action of $\Aut_{\mathcal{F}}(Q)$ on $Q$ is uniquely determined. By Alperin's Fusion Theorem, $\mathcal{F}$ is unique up to isomorphism. For the construction of $\mathcal{F}$ we split up the proof again.

\textbf{Case~2(a):} $a^{2^m}=z$.\\
Then again $n>m>1$ and $i=n-m+1$ by Equation~\eqref{congi}. So the isomorphism type of $P$ is determined by $m$ and $n$. 
We construct $P$ and $\mathcal{F}$ in a similar manner as above. For this set $q:=5^{2^{n-2}}$ and $H:=\SL(2,q)$. Then a Sylow $2$-subgroup $H$ is given by $E:=\langle v,x\rangle\cong Q_{2^{n+1}}$ where $v$ and $x$ are defined quite similar as in Case~1(a). The only difference is that $\omega\in\mathbb{F}_q^\times$ has now order $2^n$ and the matrices are not considered modulo $\Z(\SL(2,q))$ anymore. Also the element $a_1$ as above still satisfies $^{a_1}v=v^{-1}$ and $^{a_1}x=vx$. Now we can repeat the calculations in Case~1(a) word by word. Doing so, we obtain $G:=H\langle a\rangle\cong\SL(2,q).C_{2^m}$ and $\mathcal{F}=\mathcal{F}_P(G)$. 

\textbf{Case~2(b):} $a^{2^m}=1$.\\
Here Equation~\eqref{congi} gives $\max(n+m+2,2)\le i\le n$. For every $i$ in this interval we can again construct $P$ and $\mathcal{F}$ in the same manner as before. We omit the details.
%Let $\lambda\in Q$ as above. 
%Then
%\[v^{2^{n-2}}=\lambda^{-1}v^{2^{n-2}}\lambda=a^{-2}v^{2^{n-2}}a^2=v^{2^{n-2}(-1+4i)^2}

\textbf{Case~3:} $Q\cong C_{2^t}\mathop{\ast}Q_8$ is $\mathcal{F}$-essential in $P$ for some $t\ge 2$.\\
Again the argumentation above reveals that $E$ is a quaternion group and $x^2=z$. Moreover, $Q\cap E=\langle v^{2^{n-2}},x\rangle\cong Q_8$ after conjugation if necessary. Going over to $P/\langle z\rangle$, it follows that $t=m$.
Assume $n=m=i$ and $a^{2^m}=z$ for a moment. Then $(ax)^2=vza^2$ and $F_1:=\langle v,ax\rangle\cong C_{2^n}^2$ is maximal in $P$.
%Setting $\widetilde{x}:=ax$ and $\widetilde{y}:=axv$ we obtain
%\begin{align*}
%\widetilde{x}^2&=axax=a^2vz,&\widetilde{y}^2&=\widetilde{x}^2v^2,&[\widetilde{x},\widetilde{y}]&=1.
%\end{align*}
%Hence, $\langle\widetilde{x},\widetilde{y}\rangle\cong C_{2^n}^2$ is maximal in $P$. 
Since $P/\Phi(F_1)$ is nonabelian, we get $P\cong C_{2^n}\wr C_2$ (compare with the proof of Proposition~\ref{rank2}).
%As in the beginning of the proof of Proposition~\ref{rank2} we get $P\cong C_{2^n}\wr C_2$. 
Thus, in case $n=m$ and $a^{2^m}=z$ we assume $i<n$ in the following. We will see later that other parameters cannot lead to a wreath product.
After excluding this special case, it follows as before that $Q$ is the only $\mathcal{F}$-essential subgroup up to conjugation.
Since $\C_Q(x)$ contains an element of order $2^m$, we have $\C_{\Phi(P)}(x)=\langle\lambda\rangle$. 
Hence, we have to replace Equation~\eqref{congi} by
\[z=\lambda^{2^{m-1}}=v^{2^{m+i-2}\nu}a^{2^m}\]
where $\nu$ is an odd number. 
Moreover, $Q=\langle v^{2^{n-2}},x,\lambda\rangle$.
If $a^{2^m}=z$, then $\max(n-m+2,2)\le i\le n$. On the other hand, if $a^{2^m}=1$, then $n>m>1$ and $i=n-m+1$. Hence, these cases complement exactly the Case~2 above. 

The uniqueness of $\mathcal{F}$ is a bit easier than for the other types of essential subgroups. Again $\Aut_{\mathcal{F}}(Q)$ has order $24$ and is generated by $\N_P(Q)/\Z(Q)$ and an automorphism $\alpha\in\Aut_{\mathcal{F}}(Q)$ of order $3$. It suffices to describe the action of $\alpha$ on $Q$ up to automorphisms from $\Aut(P)$. By considering $Q/Q'\cong C_{2^{m-1}}\times C_2^2$ we see that $R:=\langle v^{2^{n-2}},x\rangle$ is the only subgroup of $Q$ isomorphic to $Q_8$. In particular $\alpha$ must act on $R$. Here we also can describe the action precisely by changing $\alpha$ slightly. Moreover, $\C_Q(\alpha)=\Z(Q)=\langle\lambda\rangle$, since $\alpha$ is unique up to conjugation in $\Aut(Q)$. This shows that $\mathcal{F}$ is uniquely determined up to isomorphism. Now we distinguish the split and nonsplit case in order to construct $P$ and $\mathcal{F}$.

\textbf{Case~3(a):} $a^{2^m}=1$.\\
At first glance one might think that the construction in Case~2 should not work here. However, it does. We denote $q$, $H$ and so on as in Case~2(a). Then $a^{2^m}$ is the inner automorphism on $H$ induced by $z$. But since $z\in\Z(H)$, $a^{2^m}$ is in fact the trivial automorphism. Hence, we can construct the semidirect product $G=H\rtimes\langle a\rangle$ which does the job.

\textbf{Case~3(b):} $a^{2^m}=z$.\\
Here we do the opposite as in Case~3(a). With the notation of Case~3(a), $a$ is an automorphism of $H$ such that $a^{2^m}=1$ and $a$ fixes $z\in\Z(H)$. Using Theorem~15.3.1 in \cite{Hall} 
%Beispiel~I.14.8 in \cite{Huppert} 
we can build a nonsplit extension $G:=H\langle a\rangle$ such that $a^{2^m}=z$. This group fulfills our conditions.

Finally we show that different parameters in all these group presentations give nonisomorphic groups. 
Obviously the metacyclic groups are pairwise nonisomorphic and not isomorphic to nonmetacyclic groups. Hence, it suffices to look at the groups coming from Theorem~4.4 in \cite{Janko}. %We have already seen that specific parameters lead 
So let $P$ be as in Equation~\eqref{pres} together with additional dependence between $x^2$ and the choice of $i$ as in the statement of our theorem (this restriction is important).
%one these groups given in the last six cases of the statement of the theorem. Then $P$ has parameters $n$, $m$, $x^2,a^{2^m}\in\{1,z\}$ and $i\in\{\max(n-m+1,2),\ldots,n\}$. 
Assume that $P$ is isomorphic to a similar group $P_1$ where we attach an index $1$ to all elements and parameters of $P_1$. Then we have $2^{n+m+1}=|P|=|P_1|=2^{n_1+m_1+1}$ and $2^n=|P'|=|P_1'|=2^{n_1}$. This already shows $n=n_1$ and $m=m_1$. 
As proved above $P$ admits a nonnilpotent fusion system with essential subgroup $C_{2^{m-1}}\times C_2^2$ if and only if $x^2=1$. Hence, $x^2=1$ if and only if $x_1^2=1$. Now we show $i=i_1$. For this we consider $\Phi(P)=\langle v,a^2\rangle$. Since $\Phi(P)$ is metacyclic, it follows that $\Phi(P)'=\langle[v,a^2]\rangle=\langle v^{2^{i+1}}\rangle\cong C_{2^{\eta}}$ where $\eta:=\max(n-i-1,0)$. Since $i,i_1\le n$, we may assume $i,i_1\in\{n-1,n\}$. In case $i=n$ the subgroup $C:=\langle v,ax\rangle$ is abelian. 
By Theorem~4.3(f) in \cite{Janko}, $C$ is a metacyclic maximal subgroup of $P$.
However, in case $i=n-1$ it is easy to see that the two metacyclic maximal subgroups $\langle v,a\rangle$ and $\langle v,ax\rangle$ of $P$ are both nonabelian. This gives $i=i_1$. 
It remains to show: $a^{2^m}=1\Longleftrightarrow a_1^{2^{m_1}}=1$. For this we may assume $x^2=z$ and $x_1^2=z_1$. 
In case $i=n-m+1$ (and $n>m>1$) we have $a^{2^m}=1$ if and only if $P$ provides a fusion system with essential subgroup $C_{2^m}\mathop{\ast}Q_8$. A similar equivalence holds for $\max(n-m+2,2)\le i\le n$ (even in case $n=m=i$). This completes the proof. 
%Then every elements in $E=\langle v,x\rangle$ of order $4$ are conjugate to $x$ in $P$. 
%Now the restrictions on $i$ and $i_1$ also imply: $a^{2^m}=1\Longleftrightarrow a_1^{2^{m_1}}=1$.
\end{proof}

We present an example to shed more light on the alternative in part~\eqref{dsp} of Theorem~\ref{main}. Let us consider the smallest case $n=m=i=2$. The group $N:=A_6\cong\PSL(2,3^2)$ has Sylow $2$-subgroup $D_8$.
%$S:=\langle(1,2,3,4)(5,6),(2,4)(5,6)\rangle\cong D_8$. 
Let $H:=\langle h\rangle\cong C_4$. It is well known that $\Aut(N)/N\cong C_2^2$, and the three subgroups of $\Aut(N)$ of index $2$ are isomorphic to $S_6$, $\PGL(2,9)$ and the Mathieu group $M_{10}$ of degree $10$ respectively. We choose two homomorphisms $\phi_j:H\to\Aut(N)$ for $j=1,2$ such that $\phi_1(h)\in\PGL(2,9)\setminus N$ is an involution and $\phi_2(h)\in M_{10}\setminus N$ has order $4$ (we do not define $\phi_j$ precisely). Then it turns out that the groups $G_j:=N\rtimes_{\phi_j} H$ for $j=1,2$ have Sylow $2$-subgroup $P$ as in part~\eqref{dsp}. Moreover, one can show that $\mathcal{F}_1:=\mathcal{F}_P(G_1)\ne\mathcal{F}_P(G_2)=:\mathcal{F}_2$. More precisely, $\Z(\mathcal{F}_1)=\Z(G_1)=\langle\phi_1(h)^2\rangle$ is generated by a square in $P$ and $\Z(\mathcal{F}_2)$ is not. 
The indices of $G_j$ in the “Small Group Library” are \texttt{[1440,4592]} and \texttt{[1440,4595]} respectively. It should be clarified that this phenomenon is not connected to the special behavior of $A_6$, since it occurs for all $n$ with $\PSL(2,5^{2^{n-1}})$.

As a second remark we indicate a more abstract way to establish the nonexoticness of our fusion systems. It suffices to look at the cases~\eqref{dnonsp} to \eqref{last} in Theorem~\ref{main}. If $P$ does not contain an abelian $\mathcal{F}$-essential subgroup, then Proposition~\ref{centerfree} shows $\Z(\mathcal{F})\ne 1$. Here Theorem~2.4(b) in \cite{OliverVentura} reduces the question of exoticness to a fusion system on the smaller bicyclic group $P/\langle z\rangle$. Hence, we may assume that there is a $\mathcal{F}$-essential subgroup $Q=\langle z,x,\lambda\rangle\cong C_{2^{m-1}}\times C_2^2$. Moreover, we can assume that $\Z(\mathcal{F})=1$. Now we construct the \emph{reduced} fusion system of $\mathcal{F}$ (see Definition~2.1 in \cite{AOV}). By Proposition~1.5 in \cite{AOV} we have $\pcore_2(\mathcal{F})\le Q\cap {^aQ}\subseteq\langle z,\lambda\rangle$. Since $\pcore_2(\mathcal{F})$ is strongly closed in $P$, we have $z\notin\pcore_2(\mathcal{F})$. Hence, $\pcore_2(\mathcal{F})$ is cyclic and $\Omega(\pcore_2(\mathcal{F}))\subseteq\Z(\mathcal{F})=1$. This shows $\pcore_2(\mathcal{F})=1$. So in the definition of the reduced fusion system we have $\mathcal{F}_0=\mathcal{F}$. Now we determine $\mathcal{F}_1:=\pcore^2(\mathcal{F})$. Since $E=\langle x,vx\rangle$, it turns out that the hyperfocal subgroup of $\mathcal{F}$ is $E\cong D_{2^{n+1}}$. Using Definition~1.21 and 1.23 in \cite{AOV} it is easy to see that $\mathcal{F}_1$ has two essential subgroups isomorphic to $C_2^2$ up to conjugation. That is $\mathcal{F}_1=\mathcal{F}_E(\PSL(2,5^{2^{n-1}}))$. Moreover, we have $\mathcal{F}_2:=\pcore^{2'}(\mathcal{F}_1)=\mathcal{F}_1$. So it follows that $\mathcal{F}_1$ is the reduction of $\mathcal{F}$. 
By Proposition~4.3 in \cite{AOV}, $\mathcal{F}_1$ is tame in the sense of Definition~2.5 in \cite{AOV}. Without using the classification of the finite simple groups, Theorem~2.10 in \cite{AOV} implies that $\mathcal{F}_1$ is even strongly tame.
%%As announced in the talk \cite{Brototalk} every fusion system on a finite simple group of Lie type is tame in the sense of Definition~2.5 in \cite{AOV}. So $\mathcal{F}_1$ is tame. 
Hence, also $\mathcal{F}$ is tame by Theorem~2.20 in \cite{AOV}. In particular $\mathcal{F}$ is not exotic. 
However, using this approach it is a priori not clear if these (nonnilpotent) fusion systems exist at all. 
%But this can also be handled in an abstract manner as follows. 
%Let $Q$ be (a candidate for) an $\mathcal{F}$-essential subgroup of $P$. 
%Using Proposition~\ref{essnotnormal}, it is easy to see there is a nonnilpotent fusion system on $N:=\N_P(Q)$ which can be realized by a finite group $H$ (see also Theorem~4.6 in \cite{Linckelmann}). Then Theorem~1 in \cite{RobinsonAmal} shows that $\mathcal{F}$ is the fusion system of the (infinite!) free product $H\mathop{\ast}_N P$ with amalgamated subgroup $N$. The only problem which remains on these lines is the uniqueness of $\mathcal{F}$. The different possibilities for $\mathcal{F}$ differ by the ways one can embed $N$ into $H$ in the construction of $H\mathop{\ast}_N P$.

As another comment, we observe that the $2$-groups in parts~\eqref{q8} to \eqref{last} have $2$-rank $2$. Hence, these are new examples in the classification of all fusion systems on $2$-groups of $2$-rank $2$ which was started in \cite{CravenGlesser}. It is natural to ask what happens if we interchange the restrictions on $i$ in case~\eqref{dnonsp} and case~\eqref{dsp} in Theorem~\ref{main}. We will see in the next theorem that this does not result in new groups.

\begin{Theorem}
Let $P$ be a bicyclic, nonmetacyclic $2$-group. Then $P$ admits a nonnilpotent fusion system if and only if $P'$ is cyclic.
\end{Theorem}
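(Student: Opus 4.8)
The statement is an equivalence and one direction is already at hand. If $P$ carries a nonnilpotent fusion system, then Theorem~\ref{cycliccom} forces $P'$ to be cyclic, so the forward implication is immediate. The plan is therefore to concentrate on the converse: every bicyclic, nonmetacyclic $2$-group $P$ with cyclic commutator subgroup should admit a nonnilpotent fusion system. For this I would construct nothing new but instead reduce to the classification already obtained in Theorem~\ref{main}, whose converse part supplies a nonnilpotent fusion system for each group listed there. Concretely, it suffices to prove that the nonmetacyclic groups with cyclic $P'$ occurring in Theorem~\ref{main} -- the wreath products, the minimal nonabelian groups of type $(n,1)$, and cases \eqref{dnonsp}--\eqref{last} -- already exhaust all such $P$ up to isomorphism.

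First I would recall the structure theory. By Theorem~\ref{Janko} and Janko's classification (Theorem~4.4 in \cite{Janko}), a bicyclic, nonmetacyclic $2$-group with cyclic commutator subgroup is either minimal nonabelian of type $(n,1)$ (when $|P'|=2$), in which case the result is Proposition~\ref{E8normal}, or it admits the presentation \eqref{pres} with $P'=\langle v\rangle\cong C_{2^n}$, subject to the parameter constraints of \cite{Janko} together with the choices $x^2,a^{2^m}\in\{1,z\}$. Comparing this with Theorem~\ref{main}, every combination is accounted for except two ``interchanged'' ones in the dihedral case $x^2=1$: the nonsplit presentations with $i\ge n-m+2$ and the split presentations with $i=n-m+1$ (the listed dihedral cases \eqref{dnonsp} and \eqref{dsp} occur only with $i=n-m+1$ nonsplit, resp. $i\ge n-m+2$ split). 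Closing this gap is precisely the content of the remark preceding the theorem.

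The crux is thus to show that these two interchanged families yield no new groups. My approach would be to produce, inside such a group, a second system of generators for which the defining relations take one of the forms already in the list. The key observation is that although the subgroup $\langle v,x\rangle$ is dihedral in these presentations, the group nonetheless contains a quaternion section aligned with $x$ and a suitable power of $a$; for instance, in the simplest cases one checks directly that $\langle v^2,xa^2\rangle\cong Q_8$, using $(xa^2)^2=z$ and that $a^2$ centralizes $v$. Rechoosing $x$ and $a$ relative to such a quaternion section, I would verify that the group satisfies the relations of a quaternion case of Theorem~\ref{main} with the \emph{same} invariants $n$, $m$ and $i$, these being pinned down by $|P|=2^{n+m+1}$, $|P'|=2^n$ and $\Phi(P)'=\langle v^{2^{i+1}}\rangle$ (as in the isomorphism analysis at the end of the proof of Theorem~\ref{main}). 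Thus each interchanged dihedral group is isomorphic to the corresponding quaternion group in cases \eqref{q8} and \eqref{last}, and in the degenerate situations $n=m$ or $i=n$ to a wreath product, exactly as already encountered in the proof of Theorem~\ref{main}.

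The main obstacle is this last bookkeeping: one must carry out the change of generators carefully enough to match the relations across all admissible ranges of $i$ and to treat the boundary cases separately. A useful consistency check, and an alternative route to the same conclusion, is that a genuinely dihedral-type bicyclic group can only possess abelian $\mathcal{F}$-essential subgroups -- by Proposition~\ref{biess} together with the case analysis (Cases~1--3) in the proof of Theorem~\ref{main}, where the nonabelian essential types force $\langle v,x\rangle$ to be quaternion -- and such essential subgroups exist only for the two listed dihedral combinations; hence the interchanged families cannot be new dihedral groups and must coincide with quaternion ones. Once this is established, Janko's list of nonmetacyclic bicyclic $2$-groups with cyclic $P'$ coincides with cases (7)--(15) of Theorem~\ref{main}, each of which admits a nonnilpotent fusion system. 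This proves the converse, and hence the theorem, while also confirming that interchanging the restrictions on $i$ in \eqref{dnonsp} and \eqref{dsp} produces no new groups.
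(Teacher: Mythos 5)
Your proposal is correct and follows essentially the same route as the paper: the forward implication is Theorem~\ref{cycliccom}, and the converse reduces via Janko's presentation to the list in Theorem~\ref{main}, with the only work being the two "interchanged" dihedral parameter families ($x^2=1$ with split/$i=n-m+1$ or nonsplit/$i\ge n-m+2$), which are shown to be isomorphic to quaternion-type groups already in the list by a change of generators. The "suitable power of $a$" your bookkeeping step calls for is exactly the paper's uniform substitution $\widetilde{x}:=xa^{2^{m-1}}$, which satisfies $\widetilde{x}^2=z$, $^{\widetilde{x}}v=v^{-1}$ and $^a\widetilde{x}=v\widetilde{x}$, so the quaternion relations hold with the same parameters $n,m,i$ and no separate invariant-matching argument is needed.
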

\begin{proof}
By Theorem~\ref{cycliccom} it suffices to prove only one direction. Let us assume that $P'$ is cyclic. Since $P$ is nonmetacyclic, it follows that $P'\ne 1$. In case $|P'|=2$, Theorem~4.1 in \cite{Janko} implies that $P$ is minimal nonabelian of type $(n,1)$ for some $n\ge 2$. We have already shown that there is a nonnilpotent fusion system on this group. Thus, we may assume $|P'|>2$. Then we are again in Theorem~4.4 in \cite{Janko}. After adapting notation, $P$ is given as in Equation~\eqref{pres}. In case $x^2=z$ there is always a nonnilpotent fusion system on $P$ by Theorem~\ref{main}. Hence, let $x^2=1$. Then it remains to deal with two different pairs of parameters.

\textbf{Case~1:} $a^{2^m}=1$ and $i=n-m+1\ge 2$.\\
Set $\widetilde{x}:=xa^{2^{m-1}}$. Then 
\[\widetilde{x}^2=xa^{2^{m-1}}xa^{2^{m-1}}=(v^{-1}a)^{2^{m-1}}a^{2^{m-1}}=v^{2^{i+m-2}\nu}a^{2^m}=z\]
for an odd integer $\nu$. Moreover, $^{\widetilde{x}}v=v^{-1}$, $^a\widetilde{x}=vxa^{2^{m-1}}=v\widetilde{x}$. This shows that $P$ is isomorphic to a group with parameters $x^2=z$, $a^{2^m}=1$ and $i=n-m+1\ge 2$. In particular Theorem~\ref{main} provides a nonnilpotent fusion system on $P$.

\textbf{Case~2:} $a^{2^m}=z$ and $\max(2,n-m+2)\le i\le n$.\\
Again let $\widetilde{x}:=xa^{2^{m-1}}$. Then
\[\widetilde{x}^2=v^{2^{i+m-2}\nu}a^{2^m}=z.\]
Hence, $P$ is isomorphic to a group with parameters $x^2=a^{2^m}=z$ and $\max(2,n-m+2)\le i\le n$. The claim follows as before.
\end{proof}

Now we count how many interesting fusion systems we have found.

\begin{Proposition}
Let $f(N)$ be the number of isomorphism classes of bicyclic $2$-groups of order $2^N$ which admit a nonnilpotent fusion system. Moreover, let $g(N)$ be the number of nonnilpotent fusion systems on all bicyclic $2$-groups of order $2^N$. Then
\begin{center}
\begin{tabular}{c|c|c|c|c|c}
$N$&$1$&$2$&$3$&$\ge 4$ even&$\ge 5$ odd\\\hline
$f(N)$&$0$&$1$&$2$&$\frac{3}{4}N^2-3N+5$&\vphantom{$5^{4^4}$}$(3N^2+1)/4-3N+3$\\
$g(N)$&$0$&$1$&$3$&$\frac{3}{4}N^2-2N+5$&\vphantom{$5^{4^4}$}$(3N^2+1)/4-2N+5$
\end{tabular}
\end{center}
\end{Proposition}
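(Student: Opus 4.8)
The plan is to obtain both quantities by directly tabulating the classification in Theorem~\ref{main}. That theorem lists every bicyclic $2$-group admitting a nonnilpotent fusion system, records how many nonnilpotent systems each such group carries, and asserts that distinct parameters give nonisomorphic groups. Hence $f(N)$ is the number of admissible parameter choices whose group has order $2^N$, while $g(N)$ is the same count weighted by the number of fusion systems attached to each group. Both thereby become lattice-point counts over the parameter ranges of Theorem~\ref{main}.

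First I would separate the ``bounded'' families, whose contribution in order $2^N$ depends only on the parity of $N$ and on a size threshold: the abelian groups $C_{2^n}^2$ (order $2^{2n}$, one system, hence even $N$ only); the groups $D_{2^N}$, $Q_{2^N}$, $SD_{2^N}$, carrying $2$, $2$, $3$ systems respectively (one group each, with thresholds $N\ge3$ resp.\ $N\ge4$); the wreath products $C_{2^n}\wr C_2$ of order $2^{2n+1}$ (one group, three systems, for odd $N\ge5$); and the minimal nonabelian groups of type $(n,1)$ of order $2^{n+2}$ (one group, one system, for $N\ge4$). For even $N\ge4$ exactly five such groups occur, which will furnish the constant $5$; for odd $N\ge5$ again five occur, one of them now being a wreath product rather than a homocyclic group.

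The quadratic growth comes entirely from the cyclic extensions in parts (9)--(14), all of order $2^{n+m+1}$. Fixing $s:=n+m=N-1$ and summing over $n,m\ge2$, these organize into three ``group-type'' buckets: the extensions of $D$, and the two kinds of extensions of $Q$ distinguished by the essential subgroup $C_{2^{m-1}}\times Q_8$ or $C_{2^m}\mathop{\ast}Q_8$. In each bucket one subcase has its defining integer $i$ pinned to $i=n-m+1$ (so it requires $n>m$ and contributes only linearly in $s$), while the other lets $i$ range over $\max(2,n-m+2)\le i\le n$, contributing $\min(n,m)-1$ choices for each pair $(n,m)$. The three ranging-$i$ subcases therefore each contribute $\sum_{n+m=s,\ n,m\ge2}(\min(n,m)-1)\sim s^2/4$, and tripling this sum produces the leading coefficient $\tfrac34N^2$; the single-$i$ subcases and the bounded families supply the linear and constant terms.

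Finally I would evaluate these sums in closed form, splitting according to the parity of $s=N-1$ (which governs the central term of $\sum\min(n,m)$ and the mild asymmetry caused by the exclusion $i\ne n$ when $m=n$ in the nonsplit $Q$-extension), to obtain $f(N)$. To pass to $g(N)$ I add the surplus $g(N)-f(N)$: each of $D_{2^N}$, $Q_{2^N}$, $SD_{2^N}$ and (for odd $N$) each wreath product carries extra systems, and each split dihedral extension with $i=n$ carries one additional system, there being $\max(0,N-4)$ of the latter; summing these corrections gives $g(N)-f(N)=N$ for even $N$ and $N+2$ for odd $N$, which added to $f(N)$ yields the tabulated $g(N)$. \textbf{The main obstacle} is purely the bookkeeping: keeping the boundary constraints ($n,m\ge2$, the size thresholds, and the exclusion $i\ne n$ for $m=n$) consistent through the parity split so that the linear and constant coefficients come out exactly, and verifying the small cases $N\le5$ by hand against the formulas.
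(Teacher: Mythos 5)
Your proposal is correct and follows essentially the same route as the paper's proof: both tabulate Theorem~\ref{main}, count the five bounded groups for each parity of $N$, compute the three pinned families ($i=n-m+1$, linear in $N$) and the three ranging families ($\max(2,n-m+2)\le i\le n$, quadratic) with a parity split at $s=N-1$, and then pass to $g(N)$ by adding the surplus systems on $D_{2^N}$, $Q_{2^N}$, $SD_{2^N}$, the wreath product (odd $N$), and the $\max(0,N-4)$ split dihedral extensions with $i=n$. The only difference is cosmetic bookkeeping: you write each ranging count as $\sum(\min(n,m)-1)$ and subtract the excluded parameter $m=n=i$ explicitly, whereas the paper sums $n-\max(2,2n-N+3)+1$ directly and lets that exclusion cancel silently against the wreath product it represents.
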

\begin{proof}
Without loss of generality, $N\ge 4$.
We have to distinguish between the cases $N$ even and $N$ odd. Assume first that $N$ is even. Then we get the following five groups: $C_{2^{N/2}}^2$, $D_{2^N}$, $Q_{2^N}$, $SD_{2^N}$ and the minimal nonabelian group of type $(N-2,1)$. From case~\eqref{dnonsp} of Theorem~\ref{main} we obtain exactly $N/2-2$ groups. In case~\eqref{dsp} the number of groups is
\begin{align*}
\sum_{n=2}^{N-3}{(n-\max(2,2n-N+3)+1)}&=\sum_{n=2}^{N/2-1}{(n-1)}+\sum_{n=N/2}^{N-3}{(N-n-2)}=2\sum_{n=1}^{N/2-2}{n}\\
&=(N/2-2)(N/2-1)=\frac{N^2}{4}-\frac{3N}{2}+2.
\end{align*}
The other cases are similar (observe that the wreath product cannot occur, since $N$ is even). All together we get
\[5+3(N/2-2)+3(N^2/4-3N/2+2)=\frac{3}{4}N^2-3N+5\]
bicyclic $2$-groups of order $2^N$ with nonnilpotent fusion system. 

Now if $N$ is odd we have the following four examples: $D_{2^N}$, $Q_{2^N}$, $SD_{2^N}$ and the minimal nonabelian group of type $(N-2,1)$. From case~\eqref{dnonsp} of Theorem~\ref{main} we obtain exactly $(N-5)/2$ groups. In case~\eqref{dsp} the number of groups is
\begin{align*}
\sum_{n=2}^{N-3}{(n-\max(2,2n-N+3)+1)}&=\sum_{n=2}^{(N-1)/2}{(n-1)}+\sum_{n=(N+1)/2}^{N-3}{(N-n-2)}=2\sum_{n=1}^{(N-5)/2}{n}+(N-3)/2\\
&=\frac{(N-5)(N-3)}{4}+\frac{N-3}{2}=\frac{N^2-6N+9}{4}.
\end{align*}
Adding the numbers from the other cases (this time including the wreath product), we obtain
\[4+3\frac{N^2-4N-1}{4}=\frac{3N^2+1}{4}-3N+3.\]
In order to obtain $g(N)$ from $f(N)$ we have to add one fusion system on $D_{2^N}$, one on $Q_{2^N}$, and two on $SD_{2^N}$. If $N$ is odd, we get two more fusion systems on the wreath product. For all $N\ge 5$ we have to add $N-4$ fusion systems coming from part~\eqref{dsp} in Theorem~\ref{main}.
\end{proof}

\section{Applications}
We present an application to finite simple groups. For this we introduce a general lemma.

\begin{Lemma}\label{perfect}
Let $G$ be a perfect group and $1\ne P\in\Syl_p(G)$ such that $\N_G(P)=P\C_G(P)$. Then there are at least two conjugacy classes of $\mathcal{F}_P(G)$-essential subgroups in $P$. 
\end{Lemma}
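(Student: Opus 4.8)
The plan is to argue by contradiction, using the focal subgroup theorem together with Alperin's Fusion Theorem. Write $\mathcal{F}:=\mathcal{F}_P(G)$ and suppose $P$ has \emph{at most one} $P$-conjugacy class of $\mathcal{F}$-essential subgroups. I would show that under this assumption the focal subgroup $P\cap[G,G]$ is a \emph{proper} subgroup of $P$, which contradicts the perfectness of $G$: since $[G,G]=G$, the focal subgroup equals $P\cap G=P$.

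The first step is to translate the hypothesis. From $\N_G(P)=P\C_G(P)$ we get
\[
\Aut_{\mathcal{F}}(P)=\N_G(P)/\C_G(P)=P\C_G(P)/\C_G(P)\cong P/(P\cap\C_G(P))=P/\Z(P)=\Inn(P),
\]
so $\Out_{\mathcal{F}}(P)=1$. Two things follow. First, $\Aut_{\mathcal{F}}(P)$-conjugacy of subgroups of $P$ coincides with ordinary $P$-conjugacy, so the representative set $\mathcal{E}$ in Alperin's Fusion Theorem may be taken to be a transversal for the $P$-conjugacy classes of essential subgroups; by assumption $\mathcal{E}$ consists of at most one subgroup $Q$. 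Second, the contribution of $P$ itself to any focal computation lies in $[P,\Aut_{\mathcal{F}}(P)]=[P,P]=P'\subseteq\Phi(P)$.

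Next I would invoke the classical focal subgroup theorem, $P\cap[G,G]=\langle x^{-1}\alpha(x):x\in S\le P,\ \alpha\in\Hom_{\mathcal{F}}(S,P)\rangle$, and feed it through Alperin's Fusion Theorem as proved above. Writing a given $\mathcal{F}$-morphism as a composite of restrictions of automorphisms $\psi_j\in\Aut_{\mathcal{F}}(Q_j)$ with $Q_j\in\mathcal{E}\cup\{P\}$ and telescoping (with $x_0:=x$, $x_j:=\psi_j(x_{j-1})$, so that $x^{-1}\alpha(x)=\prod_j x_{j-1}^{-1}\psi_j(x_{j-1})$), one sees that $P\cap[G,G]$ is generated by the elements $x^{-1}\psi(x)$ with $x\in Q$, $\psi\in\Aut_{\mathcal{F}}(Q)$ and $Q\in\mathcal{E}\cup\{P\}$. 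The key move is to pass to the elementary abelian quotient $\overline{P}:=P/\Phi(P)$. The generators coming from $Q=P$ map to $0$ there, by the computation of the previous paragraph. A generator $x^{-1}\psi(x)$ coming from the essential $Q$ lies in $Q$, hence maps into $\overline{Q}:=Q\Phi(P)/\Phi(P)$, and $\overline{Q}$ is a \emph{proper} subspace of $\overline{P}$: if $Q\Phi(P)=P$ then $Q=P$ by the Frattini argument, contradicting $Q<P$.

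Finally I would exploit that $P$ acts trivially on $\overline{P}$, so that all $P$-conjugates of $Q$ share the same image $\overline{Q}$ in $\overline{P}$. Under the standing assumption the image of $P\cap[G,G]$ in $\overline{P}$ is therefore contained in the single proper subspace $\overline{Q}$ (and is $0$ if there are no essential subgroups at all), whence $P\cap[G,G]\le Q\Phi(P)<P$. This contradicts $P\cap[G,G]=P$, completing the proof. I expect the only delicate point to be the bookkeeping in the focal-subgroup-via-Alperin reduction — verifying the telescoping identity and that running over $Q\in\mathcal{E}\cup\{P\}$ suffices; the rest is an immediate consequence of $\Aut_{\mathcal{F}}(P)=\Inn(P)$ and the fact that conjugation acts trivially on $P/\Phi(P)$, which together force the single essential class to leave room in $\overline{P}$ that perfectness does not allow.
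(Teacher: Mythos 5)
Your proof is correct and is essentially the paper's own argument: both combine the focal subgroup theorem with Alperin's Fusion Theorem (via $\Aut_{\mathcal{F}}(P)=\Inn(P)$, forced by $\N_G(P)=P\C_G(P)$) to trap $P\cap G'$ inside the proper subgroup $Q\Phi(P)$ --- the paper phrases the same bound as $P'Q\subseteq M$ for a maximal subgroup $M$ containing $Q$. The only cosmetic difference is that you handle the case of no essential subgroups uniformly inside the focal computation, whereas the paper disposes of it separately by observing that $\mathcal{F}$ is then nilpotent, so $G$ would be $p$-nilpotent, contradicting perfectness.
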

\begin{proof}
Let $\mathcal{F}:=\mathcal{F}_P(G)$. If there is no $\mathcal{F}$-essential subgroup, then $\mathcal{F}$ is nilpotent and $G$ is $p$-nilpotent, since $\Out_{\mathcal{F}}(P)=\N_G(P)/P\C_G(P)=1$. Then $G'\le P'\pcore_{p'}(G)<G$, because $P\ne 1$. Contradiction. Now suppose that there is exactly one $\mathcal{F}$-essential subgroup $Q\le P$ up to conjugation. Then $Q$ lies in a maximal subgroup $M<P$. Moreover, $P'\subseteq\Phi(P)\subseteq M$. Now the focal subgroup theorem (see Theorem~7.3.4 in \cite{Gorenstein}) gives the following contradiction:
\[P=P\cap G=P\cap G'=\langle x^{-1}\alpha(x):x\in P,\ \alpha \text{ morphism in }\mathcal{F}\rangle\subseteq P'Q\subseteq M.\qedhere\]
\end{proof}

We remark that the number of conjugacy classes of essential subgroups is sometimes called the \emph{essential rank} of the fusion system (see for example \cite{Henke}). 

\begin{Theorem}
Let $G$ be a simple group with bicyclic Sylow $2$-subgroup. Then $G$ is one of the following groups: $C_2$, $\PSL(i,q)$, $\PSU(3,q)$, $A_7$ or $M_{11}$ for $i\in\{2,3\}$ and $q$ odd.
\end{Theorem}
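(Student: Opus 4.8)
The plan is to feed the fusion system $\mathcal{F}:=\mathcal{F}_P(G)$ into the classification of Theorem~\ref{main}, use Lemma~\ref{perfect} to force the essential rank to be large, and then invoke the classical recognition theorems for the few surviving Sylow $2$-subgroups. First I would clear away the trivial case: if $P$ is cyclic, then Burnside's Transfer Theorem produces a normal $2$-complement, and simplicity forces $G\cong C_2$. From now on $P$ is noncyclic and $G$ is nonabelian simple, hence perfect; since $P\ne1$ the group $G$ is not $2$-nilpotent, so $\mathcal{F}$ is nonnilpotent and therefore appears somewhere in Theorem~\ref{main}.

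The central observation is that $\Out_{\mathcal{F}}(P)=\N_G(P)/P\C_G(P)$ has odd order, since $\Inn(P)$ is a Sylow $2$-subgroup of $\N_G(P)/\C_G(P)$. By Proposition~\ref{biaut}, $\Aut(P)$ is a $2$-group unless $P$ is homocyclic or $P\cong Q_8$, so I would dispose of these two exceptional shapes first. A simple group cannot have $P\cong Q_8$, by the Brauer--Suzuki theorem (no nonabelian simple group has generalized quaternion Sylow $2$-subgroups). If $P\cong C_{2^n}^2$ is homocyclic, then $P$ is abelian and Walter's classification of simple groups with abelian Sylow $2$-subgroups shows these Sylow subgroups are elementary abelian; hence $n=1$ and $P\cong C_2^2$, the Klein four case which belongs to $\PSL(2,q)$.

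In every remaining case $\Aut(P)$, and thus $\Out(P)$, is a $2$-group, so the odd-order subgroup $\Out_{\mathcal{F}}(P)\le\Out(P)$ is trivial and $\N_G(P)=P\C_G(P)$. Now Lemma~\ref{perfect} applies, and as $G$ is perfect it guarantees at least two conjugacy classes of $\mathcal{F}$-essential subgroups. Reading off Theorem~\ref{main}, the nonnilpotent fusion systems of essential rank at least two occur only for $P\in\{D_{2^n},Q_{2^n},SD_{2^n},C_{2^n}\wr C_2\}$; every other bicyclic $2$-group --- in particular the minimal nonabelian groups of type $(n,1)$ and all the cyclic extensions of dihedral or quaternion groups appearing in parts~\eqref{dnonsp}--\eqref{last} --- carries only fusion systems with a single class of essential subgroups and is thereby excluded. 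A second appeal to Brauer--Suzuki removes $P\cong Q_{2^n}$, leaving $P$ dihedral, semidihedral, or wreathed (together with the Klein four case found above).

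To finish I would quote the classical classifications of simple groups by these Sylow $2$-subgroups: the Gorenstein--Walter theorem gives $\PSL(2,q)$ ($q$ odd) and $A_7$ for dihedral (including Klein four) Sylow $2$-subgroups, and the Alperin--Brauer--Gorenstein theorem gives $\PSL(3,q)$, $\PSU(3,q)$ and $M_{11}$ for semidihedral and wreathed Sylow $2$-subgroups, with the appropriate congruences on the odd prime power $q$. Assembling these lists yields exactly $C_2$, $\PSL(2,q)$, $\PSL(3,q)$, $\PSU(3,q)$, $A_7$ and $M_{11}$; the even-characteristic coincidences such as $\PSL(2,4)\cong\PSL(2,5)$ and $\PSL(3,2)\cong\PSL(2,7)$ are subsumed into the odd-$q$ families. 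The genuinely new ingredient --- and the step I expect to require the most care --- is the reduction itself: reading off the essential rank correctly in each part of Theorem~\ref{main} and recognizing that this list together with the lower bound of Lemma~\ref{perfect} is precisely what rules out the unfamiliar bicyclic groups (the $D_{2^{n+1}}.C_{2^m}$ and $Q_{2^{n+1}}.C_{2^m}$ families) as Sylow $2$-subgroups of simple groups, so that the problem collapses onto the four classically understood types, which I would cite as black boxes rather than reprove.
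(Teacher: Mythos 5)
Your proposal is correct, and its two pillars --- the essential-rank count extracted from Theorem~\ref{main} and the lower bound of Lemma~\ref{perfect} --- are exactly the paper's engine as well; but you deploy them along a genuinely different decomposition. The paper splits on the $2$-rank of $P$: it quotes the Alperin--Brauer--Gorenstein classification of simple groups of $2$-rank $2$ once, which yields the whole positive list in one stroke (after noting that $\PSU(3,4)$ has nonbicyclic Sylow $2$-subgroups), and then uses the fusion argument purely as a contradiction in the remaining $2$-rank $3$ case, where Theorem~\ref{main} allows at most one class of essential subgroups against the two demanded by Lemma~\ref{perfect}. You instead split via Proposition~\ref{biaut}: after disposing of the homocyclic and $Q_8$ exceptions (Walter, Brauer--Suzuki), you run Lemma~\ref{perfect} on \emph{every} remaining bicyclic group, use the essential-rank data of Theorem~\ref{main} positively to force $P$ to be dihedral, quaternion, semidihedral or wreathed, and only then invoke the type-specific recognition theorems (Gorenstein--Walter, Brauer--Suzuki again, Alperin--Brauer--Gorenstein). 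The trade-off: the paper's route needs a single classification theorem as black box and is shorter, while yours needs several; in exchange, your route makes explicit a hypothesis the paper leaves tacit --- Lemma~\ref{perfect} requires $\N_G(P)=P\C_G(P)$, which you correctly derive from Proposition~\ref{biaut} and saturation, and which the paper's $2$-rank $3$ case needs for exactly the same reason (such groups are nonmetacyclic, hence neither homocyclic nor $Q_8$) --- and in your organization neither the cyclic case nor $\PSU(3,4)$ requires an ad hoc remark, since the former is settled by Burnside at the outset and the latter simply never arises.
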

\begin{proof}
By the Alperin-Brauer-Gorenstein Theorem \cite{2rank2} on simple groups of $2$-rank $2$, we may assume that $G$ has $2$-rank $3$ (observe that a Sylow $2$-subgroup of $\PSU(3,4)$ is not bicyclic, since it has rank $4$). 
%Now we could apply the Gorenstein-Harada result on simple groups of sectional rank at most $4$. However, we prefer to give a more elementary argument. 
Let $P\in\Syl_2(G)$ and $\mathcal{F}:=\mathcal{F}_P(G)$. By Theorem~\ref{main}, there is only one $\mathcal{F}$-essential subgroup $Q$ in $P$ up to conjugation. But this contradicts Lemma~\ref{perfect}.
\end{proof}

Now we consider fusion systems coming from block theory. 
Let $B$ be a $p$-block of a finite group $G$. We denote the number of irreducible ordinary characters of $B$ by $k(B)$ and the number of irreducible Brauer characters of $B$ by $l(B)$. Moreover, let $k_0(B)$ be the number of irreducible characters of \emph{height} $0$, i.\,e. the $p$-part of the degree of these characters is as small as possible.
Let $D$ be a defect group of $B$. Then for every element $u\in D$ we have a \emph{subsection} $(u,b_u)$ where $b_u$ is a Brauer correspondent of $B$ in $\C_G(u)$. 

\begin{Theorem}
Olsson's Conjecture holds for all blocks of finite groups with bicyclic defect groups.
\end{Theorem}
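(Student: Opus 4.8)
The goal is to prove $k_0(B)\le|D:D'|$ for every $2$-block $B$ with bicyclic defect group $D$, where $k_0(B)$ counts the irreducible characters of height $0$. The plan is to exploit the complete classification of fusion systems from Theorem~\ref{main}: the fusion system $\mathcal{F}:=\mathcal{F}_D(B)$ on $D$ is one of the systems listed there, and the argument splits along the dichotomy of Theorem~\ref{cycliccom}, namely whether $\mathcal{F}$ is nilpotent (equivalently $D'$ noncyclic) or not.

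First I would dispose of the case where $\mathcal{F}$ is nilpotent. By the theorem of Broué and Puig a block with nilpotent fusion system is Morita equivalent to the group algebra of its defect group $D$. Hence $B$ and this group algebra share the invariant $k_0$, and since the height-$0$ characters of the group algebra of $D$ are precisely its $|D:D'|$ linear characters, we get $k_0(B)=|D:D'|$. Thus Olsson's Conjecture holds with equality whenever $D'$ is noncyclic, by Theorem~\ref{cycliccom}.

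It remains to treat the nonnilpotent blocks, where $D'$ is cyclic and $D$ occurs among the groups of Theorem~\ref{main}. For the metacyclic and abelian defect groups (dihedral, quaternion, semidihedral, and homocyclic) the conjecture follows from the work of Brauer and Olsson on blocks of tame type together with the known results for metacyclic (resp. abelian) defect groups; there one has $k_0(B)=4=|D:D'|$ in the maximal class cases. The genuinely nonmetacyclic defect groups that remain are the minimal nonabelian group of type $(n,1)$, the wreath product $C_{2^n}\wr C_2$, and the cyclic extensions of dihedral and quaternion groups in cases~\eqref{dnonsp}--\eqref{last}. For all of these $D'=\langle v\rangle$ is cyclic, and a direct computation gives $|D:D'|=2^{n}$ (minimal nonabelian), $2^{n+1}$ (wreath product), and $2^{m+1}$ (the cyclic extensions, where $|D|=2^{n+m+1}$ and $|D'|=2^n$).

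For these new defect groups the engine is a subsection analysis. By Theorem~\ref{main} each such $\mathcal{F}$ has a single conjugacy class of essential subgroups and inertial quotient of order $3$, so the fusion is as controlled as possible away from one essential class. One then picks a suitable subsection $(u,b_u)$: either $u$ is the central involution $z=v^{2^{n-1}}$, in which case reducing modulo $\langle z\rangle$ yields a block on the smaller bicyclic group $D/\langle z\rangle$ whose fusion system again appears in Theorem~\ref{main}, so that one may induct on $|D|$; or $u$ is chosen noncentral, so that $b_u$ has proper defect group $\C_D(u)$ carrying one of the already classified bicyclic (indeed dihedral, quaternion or abelian) fusion systems. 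In either situation the passage from $b_u$ to $B$ runs through the generalized decomposition numbers $d^u_{\chi\phi}$: the rows indexed by height-$0$ characters are governed by the Cartan invariants of $b_u$, which are known, and summing the resulting contributions is designed to return $k_0(B)\le|D:D'|$. The hard part is precisely this final estimate. Since $k_0$ is neither multiplicative nor inherited through quotients, the subsection must be selected so that the bound coming from $b_u$ collapses to exactly $2^{m+1}$ (respectively $2^n$, $2^{n+1}$); verifying this forces one to feed the explicit relations of $D$ from Theorem~\ref{main} and the classical character-theoretic invariants of dihedral and quaternion blocks into the generalized decomposition matrix, and it is here that the bulk of the work lies.
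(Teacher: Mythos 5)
Your treatment of the cases that are genuinely new in this paper --- the defect groups in cases~\eqref{dnonsp} to \eqref{last} of Theorem~\ref{main} --- is a programme, not a proof. You outline a subsection analysis through generalized decomposition numbers and then state that ``the hard part is precisely this final estimate'' and that the bulk of the work lies there; but that estimate \emph{is} the theorem for these groups, and neither of the two routes you sketch is shown to deliver it. Reducing modulo the central involution $z$ and inducting on $|D|$ fails for exactly the reason you concede yourself: $k_0$ is not controlled under domination, so a bound for the block of $G/\langle z\rangle$ with defect group $D/\langle z\rangle$ does not return a bound for $k_0(B)$. The alternative route, choosing a noncentral $u$ and arguing via the rows of the generalized decomposition matrix, requires knowing that $\langle u\rangle$ is fully $\mathcal{F}$-centralized (so that $b_u$ really has defect group $\C_D(u)$), knowing $l(b_u)$, and then an actual inequality; ``governed by the Cartan invariants of $b_u$'' is not an argument. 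In addition, your side claim that these fusion systems have ``inertial quotient of order $3$'' contradicts Proposition~\ref{biaut}: for a nonmetacyclic bicyclic $2$-group $D$ the full automorphism group $\Aut(D)$ is a $2$-group, so $\Out_{\mathcal{F}}(D)=1$; the $S_3$ lives only on the essential subgroup.

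What is missing is the one concrete choice that makes everything collapse. The paper takes the subsection $(a,b_a)$, where $a$ is the generator of the cyclic extension in the presentation of $D$. Since $\langle a,v\rangle$ is a metacyclic maximal subgroup of $D$, the element $a$ lies in no $\mathcal{F}$-essential subgroup; hence $\langle a\rangle$ is fully $\mathcal{F}$-centralized, and Theorem~2.4(ii) in \cite{Linckelmann2} shows that $b_a$ has defect group $\C_D(a)$. A short computation with the relations gives $\C_D(a)=\langle a,z\rangle$ of order $2^{m+1}$, which equals $|D:D'|$ exactly because $D'=\langle v\rangle$ here. Proposition~2.5(i) in \cite{HKS} then yields $k_0(B)\le\lvert\C_D(a)\rvert=|D:D'|$ in one line --- no decomposition matrices, no induction, no case analysis of the fusion beyond the fact (from Theorem~\ref{main}) that there is a single class of essential subgroups avoiding $a$. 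Note also that the paper disposes of the minimal nonabelian and wreathed defect groups by citing \cite{Sambalemna} and \cite{Kuelshammerwr} instead of reproving them, and that your arithmetic for the minimal nonabelian group of type $(n,1)$ is off: there $|D|=2^{n+2}$ and $|D'|=2$, so $|D:D'|=2^{n+1}$, not $2^n$.
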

\begin{proof}
Let $B$ be a $p$-block of a finite group with bicyclic defect group $D$. Since all bicyclic $p$-groups for an odd prime are metacyclic, we may assume $p=2$.
If $D$ is metacyclic, minimal nonabelian or a wreath product, then Olsson's Conjecture holds by the results in \cite{Sambale,Sambalemna,Kuelshammerwr}. 
Let $\mathcal{F}$ be the fusion system of $B$. Without loss of generality, $\mathcal{F}$ is nonnilpotent.
Hence, we may assume that $D$ is given by
\[D\cong\langle v,x,a\mid v^{2^n}=1,\ x^2,a^{2^m}\in\langle v^{2^{n-1}}\rangle,\ ^xv=v^{-1},\ ^av=v^{-1+2^i},\ ^ax=vx\rangle\]
where $\max(2,n-m+1)\le i\le n$ as in Theorem~\ref{main}. Moreover, there is only one conjugacy classes of $\mathcal{F}$-essential subgroups of $D$. 
%In order to prove Brauer $k(B)$-Conjecture we apply Theorem~3.4 in \cite{RobinsonNumber}. For this we consider the major subsection $(z,b_z)$ where $z:=v^{2^{n-1}}\in\Z(D)$ as usual. Then $b_z$ has also defect group $D$ and $z$ lies in the center of the fusion system of $b_z$. However, we have seen in the proof of Theorem~\ref{main} that there is always an automorphism of an $\mathcal{F}$-essential subgroup which does not fix $z$. Hence, the block $b_z$ is nilpotent, and Theorem~3.4 in \cite{Robinson} implies $k(B)\le|D|$, i.\,e. Brauer's $k(B)$-Conjecture is fulfilled.
We use Proposition~2.5(i) in \cite{HKS}. For this let us consider the subsection $(a,b_a)$. Since $\langle a,v\rangle$ is a metacyclic maximal subgroup of $P$, $a$ does not lie in any $\mathcal{F}$-essential subgroup of $P$. In particular $\langle a\rangle$ is fully $\mathcal{F}$-centralized. Thus, Theorem~2.4(ii) in \cite{Linckelmann2} implies that $b_a$ has defect group $\C_D(a)$. Obviously, $\C_{\langle v\rangle}(a)=\langle z\rangle$. Now let $v^jx\in\C_D(a)$ for some $j\in\mathbb{Z}$. Then $v^jx={^a(v^jx)}=v^{1-j+2^ij}x$ and $v^{2j}=v^{1+2^ij}$, a contradiction. This shows $\C_D(a)=\langle a,z\rangle$. Now by Proposition~2.5(i) in \cite{HKS} we obtain $k_0(B)\le\lvert\C_D(a)\rvert=2^{m+1}=|D:D'|$, i.\,e. Olsson's Conjecture holds.
\end{proof}

Using Theorem~3.4 in \cite{RobinsonNumber}, it is not hard to see that also Brauer's $k(B)$-Conjecture holds if for the fusion system of $B$ one of the cases \eqref{nilcase} to \eqref{dsp} in Theorem~\ref{main} occurs. 
%We plan to investigate this and other conjectures on blocks with bicyclic defect groups in a separate paper.

\section*{Acknowledgment}
This work was supported by the German Academic Exchange Service (DAAD) and the Carl Zeiss Foundation. It was written mostly in Santa Cruz, USA. I thank the University of California for its hospitality.

\begin{center}
Benjamin Sambale\\
Institut für Mathematik\\
Friedrich-Schiller-Universität\\
07743 Jena\\
Germany\\
\href{mailto:benjamin.sambale@uni-jena.de}{\texttt{benjamin.sambale@uni-jena.de}}
\end{center}

\end{document}